\newcommand{\C}{\mathbb{C}}
\newcommand{\Q}{\mathbb{Q}}
\newcommand{\R}{\mathbb{R}}
\newcommand{\Z}{\mathbb{Z}}
\newcommand{\ideal}[1]{\langle#1\rangle}
\newcommand{\SL}{\mathrm{SL}}
\newcommand{\AGM}{\mathrm{AGM}}
\renewcommand{\H}{\mathcal{H}}
\newcommand{\F}{\mathcal{F}}
\newcommand{\RR}{\mathcal{R}}
\newcommand{\LL}{\mathcal{L}}
\renewcommand{\P}{\mathbb{P}}
\def\Magma{\textsf{MAGMA}}
\def\Sage{\textsf{Sage}}
\theoremstyle{plain}
\newtheorem{theorem}{Theorem}
\newtheorem{proposition}[theorem]{Proposition}
\newtheorem{lemma}[theorem]{Lemma}
\newtheorem{corollary}[theorem]{Corollary}
\theoremstyle{definition}
\newtheorem{example}{Example}
\newtheorem*{example*}{Example}
\newtheorem{algorithm}[theorem]{Algorithm}
\theoremstyle{remark}
\newtheorem*{remark}{Remark}
\title{The complex AGM, periods of elliptic curves over~$\C$ and complex
  elliptic logarithms} 
\author{John E. Cremona\\
Thotsaphon Thongjunthug}
\begin{document}
\maketitle
\begin{abstract}
We give an account of the complex Arithmetic-Geometric Mean (AGM), as
first studied by Gauss, together with details of its relationship with
the theory of elliptic curves over~$\C$, their period lattices and
complex parametrisation.  As an application, we present efficient
methods for computing bases for the period lattices and elliptic
logarithms of points, for arbitrary elliptic curves defined over $\C$.
Earlier authors have only treated the case of elliptic curves defined
over the real numbers; here, the multi-valued nature of the complex
AGM plays an important role.  Our method, which we have implemented in
both \Magma\ and \Sage, is illustrated with several examples using
elliptic curves defined over number fields with real and complex
embeddings.
\end{abstract}

\section{Introduction}\label{sec:intro}
Let $E$ be an elliptic curve defined over $\C$, given by a Weierstrass
equation
$$
E:\quad Y^2 = 4(X-e_1)(X-e_2)(X-e_3),
$$ where the roots $e_j\in\C$ are distinct.
As is well known, there is an isomorphism (of complex analytic Lie
groups) $\C/\Lambda\cong E(\C)$, where~$\Lambda$ is the \emph{period
  lattice} of~$E$: specifically, we take $\Lambda$ to be the lattice
of periods of the invariant differential $dX/Y$ on~$E$.  It is a
discrete rank~$2$ subgroup of~$\C$, spanned by a $\Z$-basis
$\{w_1,w_2\}$ with $w_2/w_1\notin\R$.  The isomorphism is given by the
map
$$
z\pmod\Lambda \mapsto P=(\wp_\Lambda(z), \wp'_\Lambda(z))\in E(\C)
$$ (with $0\pmod\Lambda\mapsto O\in E(\C)$, the base point at
infinity) where $\wp_\Lambda$ denotes the classical elliptic
Weierstrass function associated to the lattice~$\Lambda$.  The inverse
of this map,
\[
   P \mapsto z\pmod\Lambda,
\] 
from $E(\C)$ to $\C/\Lambda$, is called the \emph{elliptic logarithm},
and we say that any $z\in\C$ representing its class modulo~$\Lambda$
is an \emph{elliptic logarithm} of $P$.  Two natural questions are:
\begin{enumerate}
\item How can we compute a basis for the period lattice $\Lambda$ of
  $E$, given a Weierstrass equation?
\item Given a point $P=(x,y)\in E(\C)$, how can we compute its
  elliptic logarithm $z\in\C$?
\end{enumerate}

For elliptic curves over $\R$, these questions have been answered
satisfactorily and are well-known.  Algorithms for computing
$\Z$-bases for period lattices of elliptic curves defined over~$\R$,
and elliptic logarithms of real points on such curves, may be found in
the literature (see, for example, \cite[Algorithm 7.4.8]{coh} or
\cite[\S3.7]{JCbook2}).  These use the real \emph{arithmetic-geometric
  mean} (AGM), and allow one to compute both values rapidly with a
high degree of precision.  The theory behind this method is described
succinctly by Mestre in \cite{bos-mes}.  The situation for elliptic
curves over $\C$, however, is less satisfactory.

In this paper, we will give a complete method for computing period
lattices and elliptic logarithms for elliptic curves over $\C$, by
generalising the real algorithm.  To this end, we will first explain
the connection between the following three classes of objects:
\begin{itemize}
\item Complex AGM sequences, as first studied by Gauss and explored in
  depth more recently by Cox \cite{cox};
\item Chains of lattices in~$\C$;
\item Chains of $2$-isogenies between elliptic curves defined over~$\C$.
\end{itemize}
These will be defined precisely below.  This connection will allow us
to derive an explicit formula (see Theorems~\ref{thm:w1}
and~\ref{thm:w1w2w3} below), based on so-called \emph{optimal} complex
AGM values, for a $\Z$-basis of the period lattice of any elliptic
curve defined over~$\C$. We then develop our method further to give an
iterative method (Algorithm~\ref{algo:elog}) for computing elliptic
logarithms of complex points. 

Our approach to the computation of periods follows closely that of
Bost and Mestre \cite{bos-mes} in the real case.  However, in that
case there is only a single chain of $2$-isogenies which needs to be
considered, and a unique AGM sequence, while over~$\C$ we find it
convenient to consider a whole class of such sequences.  The
connection between these three types of sequence has some independent
interest.

We note that the recent paper \cite{Dupont} and thesis
\cite{Dupont-thesis} by Dupont also presents related methods for
evaluating modular functions using the complex~AGM, including explicit
complexity results (see \cite[Prop.~3.3]{Dupont-thesis}).  The results
in~\cite{Dupont} may also be used to compute complex periods, as these
are given by elliptic integrals with complex parameters.

In the next three sections of the paper we consider in turn complex
AGM sequences (as are described well in Cox \cite{cox}), then lattice
chains and finally chains of $2$-isogenies.  Then we give the first
application, to the computation of a basis for the period lattice (see
Theorem~\ref{thm:w1w2w3}).  The following section gives a new proof of
a result about the complete set of values of the (multi-valued)
complex AGM, slightly more general than the version in~\cite{cox}.
Then in Section~\ref{sec:elog} we develop the elliptic logarithm
algorithm (Algorithm~\ref{algo:elog}).  The paper ends with a set of
illustrative examples, using curves defined over number fields with
real and complex embeddings, and remarks on the algorithms'
efficiency.

Our algorithms have been implemented by the authors both in
\Sage\ (see \cite{sage}) and in \Magma\ (see \cite{magma}, code
available from the second author).

The results of this paper form part of the PhD thesis
\cite{NookThesis} of the second author.  The proofs are in some cases
different: in \cite{NookThesis} both the periods and elliptic
logarithms are expressed more traditionally, as integrals over the
Riemann surface~$E(\C)$; however the resulting iterative algorithms
are identical.  The second author acknowledges the support of the
Development and Promotion of Science and Technology Talent Project
(DPST) of the Ministry of Education, Thailand.

\section{AGM Sequences}\label{sec:agmseq}
Let $(a,b)\in\C^2$ be a pair of complex numbers satisfying
\begin{equation}\label{eqn:ab-conditions}
a\ne0, \quad b\ne0, \quad a\ne\pm b.
\end{equation}
We say that $(a,b)$ is \emph{good} if $\Re(b/a)\ge0$, or equivalently,
\begin{equation}\label{eqn:good-defn}
 |a-b| \le |a+b|;
\end{equation}
otherwise the pair is said to be \emph{bad}. Clearly, only 
one of the pairs $(a,b)$, $(a,-b)$ is good, unless $\Re(b/a)=0$ (or
equivalently, $|a-b|=|a+b|$), in which case both are good.

An \emph{arithmetic-geometric mean} (AGM) \emph{sequence} is a
sequence $((a_n, b_n))_{n=0}^\infty$, whose pairs $(a_n,b_n)\in\C^2$ satisfy
the relations
\[
2a_{n+1}= a_n+b_n,\quad
b_{n+1}^2 = a_nb_n
\]
for all $n\ge0$. It is easy to see that if any one pair $(a_n,b_n)$ in
the sequence satisfies \eqref{eqn:ab-conditions} then all do, and we
will make this restriction henceforth.

From any given starting pair $(a_0,b_0)$ there are uncountably many
AGM sequences, obtained by iterating the procedure of replacing
$(a_n,b_n)$ by the arithmetic mean $a_{n+1}=(a_n+b_n)/2$ and the
geometric mean $b_{n+1}=\sqrt{a_nb_n}$, with a choice of the square
root for $b_{n+1}$ at each step. However, we usually prefer to
consider the entire sequence as a whole.  We say that an AGM sequence
is \emph{good} if the pairs $(a_n,b_n)$ are good for all but finitely
many $n$\label{page:good-agm}. A good AGM sequence in which
$(a_n,b_n)$ are good for all $n>0$ is said to be \emph{optimal}, and
\emph{strongly optimal} if in addition $(a_0,b_0)$ is good. If an AGM
sequence is not good, then we say that it is \emph{bad}.

It is easy to check that $(a_{n+1},\pm b_{n+1})$ are both good if and
only if $a_n/b_n$ is real and negative, in which case $(a_n,b_n)$ is
certainly bad.  In an optimal sequence, this situation can only occur
for $n=0$.  In consequence, for every starting pair $(a_0,b_0)$ there
is exactly one optimal AGM sequence, unless $a_0/b_0$ is real and
negative, in which case there are two, with different signs of $b_1$,
with the property that the ratios $a_n/b_n$ in one of the sequences
are the complex conjugates of those in the other.

The following proposition is from Cox (see \cite{cox}); the proof of
parts (1) and~(2) is elementary, and we refer the reader to
\cite{cox}; part (3) appears deeper, and we will give a proof below
after relating the different AGM values to a certain set of periods of
an elliptic curve.  Note that Cox defines the notion of ``good'' more
strictly than above (when $\Re(a/b)=0$ he requires $\Im(a/b)>0$, so
that exactly one of $(a,\pm b)$ is good in every case), but in view of
the preceding remarks this does not affect the following result.

\begin{proposition}\label{prop:cox}
Given a pair $(a_0,b_0)\in\C^2$ satisfying \eqref{eqn:ab-conditions}, every AGM
sequence $((a_n,b_n))_{n=0}^\infty$ starting at $(a_0,b_0)$ satisfies
the following:
\begin{enumerate}
\item $\lim_{n\to\infty}a_n$ and $\lim_{n\to\infty}b_n$ exist and are equal;
\item The common limit, say $M$, is non-zero if and only if the
sequence is good;
\item $|M|$ attains its maximum (among all AGM-sequences starting at
  $(a_0,b_0)$) if and only if the sequence is optimal.
\end{enumerate}
\end{proposition}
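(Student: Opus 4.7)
The plan is to establish (1) and (2) directly by elementary complex analysis, following Cox \cite{cox}, and to approach (3) via the geometric interpretation of AGM sequences as chains of $2$-isogenies (equivalently, sublattice chains) that is to be set up in the following sections.

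For (1), I would start from the identity
\[
  a_{n+1}^2 - b_{n+1}^2 = \frac{(a_n - b_n)^2}{4},
\]
which follows at once from the recursion. Rearranged as
\[
  |a_{n+1}-b_{n+1}| = \frac{|a_n-b_n|^2}{4\,|a_{n+1}+b_{n+1}|},
\]
this shows that once the pair $(a_n,b_n)$ becomes good, so that $|a_{n+1}+b_{n+1}|=|a_n+b_n|/2\cdot 2$ is comparable to $\max(|a_n|,|b_n|)$, the differences $|a_n-b_n|$ decay quadratically. Combined with the trivial bound $|a_{n+1}|,|b_{n+1}|\le(|a_n|+|b_n|)/2 + \sqrt{|a_n||b_n|}$, this makes $(a_n)$ and $(b_n)$ Cauchy sequences with a common limit $M$. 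For (2), if the sequence is good then the quadratic tail makes $\prod_{n\ge N}(b_{n+1}/a_{n+1})$ an absolutely convergent nonzero product, so $M\ne 0$; conversely, each bad pair satisfies $\Re(b_n/a_n)<0$, which forces $|b_{n+1}|$ to be strictly smaller than $|a_{n+1}|$ in a quantitative way that cannot be recovered at subsequent good steps, and infinitely many bad indices drive the limit to $0$.

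Part (3) is the main obstacle, and I would not attempt it by direct analysis: ``optimal'' is a global property of an entire infinite sequence of choices, so there is no obvious local comparison between two competing AGM sequences starting from the same $(a_0,b_0)$. The plan is to exploit the dictionary between AGM sequences and chains of sublattices of the period lattice $\Lambda$ of a suitable elliptic curve $E$, once that correspondence has been set up in the next two sections. Under this dictionary, $1/M$ corresponds (up to a universal normalising constant) to a distinguished nonzero element of $\Lambda$ determined by the chain, and flipping the sign of $b_{n+1}$ amounts to exchanging the two basis generators at the $n$th stage of the chain. The assertion that $|M|$ is maximal precisely for optimal sequences should then translate into the statement that the element of $\Lambda$ produced by the chain has shortest length precisely when, at every stage beyond the first, the preferred generator is a shortest nonzero vector of the corresponding lattice; this is exactly the condition defining an optimal AGM sequence. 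The genuine work therefore lies in setting up the lattice/isogeny dictionary carefully enough that (3) becomes a clean geometric statement, rather than in (3) itself once the dictionary is available.
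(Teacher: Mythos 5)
Your plan takes essentially the same route as the paper: parts (1) and (2) are treated as elementary and referred to Cox (the paper does exactly this without reproducing the details), and part (3) is deduced from the dictionary between AGM sequences, lattice chains and the identity $w_1 = \pi/M$, together with the characterisation of optimal chains via minimal coset representatives. One small imprecision worth noting: flipping the sign of $b_{n+1}$ does not exchange basis generators at stage~$n$, but rather switches which of the two admissible index-$2$ sublattices becomes $\Lambda_{n+2}$, and the relevant minimality is minimality within a coset modulo $2\Lambda_0$, not a shortest vector of the whole lattice; neither affects the viability of the plan.
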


For an AGM sequence $((a_n,b_n))_{n=0}^\infty$ starting at
$(a_0,b_0)$, we will denote the common limit
$\lim_{n\to\infty}a_n=\lim_{n\to\infty}b_n$ by $M_S(a_0,b_0)$, where
$S\subseteq\Z_{>0}$ is the set of all indices $n$ for which the pair
$(a_n,b_n)$ is bad.  For example, $M_{\emptyset}(a_0,b_0)$ denotes the
common limit for the optimal AGM sequence.  To avoid ambiguities when
$a_0/b_0$ is negative real, we may agree to choose $b_1$ so that
$\Im(a_1/b_1)>0$ in that case, though this choice will not affect our
results below.  Note that the AGM sequence is good if and only if $S$
is a finite set. To ease notation, we shall write
$M_{\emptyset}(a_0,b_0)$ simply as $M(a_0,b_0)$.

\section{Lattice Chains}\label{sec:lattice-chain}
In this paper, a \emph{lattice} will always be a free $\Z$-module of
rank $2$, embedded as a discrete subgroup of $\C$. Elements of
lattices will often be called periods, since in our application the
lattices will arise as period lattices of elliptic curves defined over
$\C$.  

The following definition, as well as Lemma~\ref{lem:primitive}, only
depend on the algebraic structure of lattices.
We define a \emph{chain of lattices (of index $2$)} to be a sequence
of lattices $(\Lambda_n)_{n=0}^\infty$ which satisfies the following
conditions:
\begin{enumerate}
\item $\Lambda_n \supset \Lambda_{n+1}$ for all $n\ge0$;
\item $[\Lambda_n:\Lambda_{n+1}]=2$ for all $n\ge0$;
\item $\Lambda_0/\Lambda_n$ is cyclic for all $n\ge1$; equivalently,
 $\Lambda_{n+1}\ne 2\Lambda_{n-1}$ for all $n\ge1$.
\end{enumerate}
Thus for each $n\ge1$ we have
\begin{equation}\label{eqn:next-lambda}
\Lambda_{n+1}=\ideal{w}+2\Lambda_n
\end{equation}
for some $w\in\Lambda_n\setminus2\Lambda_{n-1}$.  Given an initial
lattice $\Lambda_0$, there are three possibilities for
$\Lambda_1$. When $n\ge1$, one of the three sublattices of index~$2$
is excluded, since it is contained in $2\Lambda_{n-1}$ (which would
contradict the last condition in the definition), and so there are
only two possible choices for $\Lambda_{n+1}$.  The number of such
chains starting with $\Lambda_0$ is uncountable; we will distinguish a
countable subset of these as follows. Let
$$
\Lambda_\infty = \bigcap_{n=0}^\infty \Lambda_n.
$$ Then $\Lambda_\infty$ is free of rank at most $1$; the rank cannot
be $2$, since for all $n$,
$$
[\Lambda_0:\Lambda_\infty]\ge[\Lambda_0:\Lambda_n]=2^n,
$$ so $[\Lambda_0:\Lambda_\infty]$ is infinite.  We say that the chain
is \emph{good} if $\Lambda_\infty$ has rank $1$; in this case a
generator for $\Lambda_\infty$ will be called a \emph{limiting period}
of the chain.  We will first show that the limiting period is
\emph{primitive}, in the sense that it is not in $m\Lambda_0$ for any
$m\ge2$.

\begin{lemma}\label{lem:primitive}
Let $(\Lambda_n)_{n=0}^\infty$ be a good chain with $\Lambda_\infty =
\ideal{w_\infty}$.  Then 
\begin{enumerate}
\item $w_\infty$ is primitive; equivalently,
  $\Lambda_0/\Lambda_\infty$ is free of rank $1$;
\item $\Lambda_n=\ideal{w_\infty}+2^n\Lambda_0$ for all $n\ge0$.
\end{enumerate}
\end{lemma}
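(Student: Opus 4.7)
The plan is to show part~(1) by ruling out $w_\infty\in p\Lambda_0$ for every prime $p$, and then to deduce part~(2) by a short index computation. Throughout, I will use an immediate consequence of condition~(3): since $\Lambda_0/\Lambda_n$ is cyclic of order $2^n$, it has exponent $2^n$, so $2^n\Lambda_0\subseteq\Lambda_n$.

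For part~(1), the odd-prime case is routine. Suppose $w_\infty=pw$ with $p$ odd and $w\in\Lambda_0$. Then $pw\in\Lambda_n$ for every~$n$, and since $p$ is a unit in the $2$-group $\Lambda_0/\Lambda_n$ we get $w\in\Lambda_n$ for all $n$, hence $w\in\Lambda_\infty=\ideal{w_\infty}$; writing $w=kw_\infty$ yields $pk=1$, contradiction. The case $p=2$ is the substantive one. Assume $w_\infty=2w$ with $w\in\Lambda_0$, and suppose for contradiction that $w\notin\Lambda_N$ for some smallest $N\ge1$, so that $w\in\Lambda_{N-1}\setminus\Lambda_N$ while $2w=w_\infty\in\Lambda_{N+1}$. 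The quotient $\Lambda_{N-1}/\Lambda_{N+1}$ is a subquotient of the cyclic group $\Lambda_0/\Lambda_{N+1}$, and is therefore itself cyclic of order~$4$. Its unique order-$2$ subgroup coincides with $\Lambda_N/\Lambda_{N+1}$, and is also the full $2$-torsion. The image $\bar w$ is killed by~$2$, so $\bar w\in\Lambda_N/\Lambda_{N+1}$, that is $w\in\Lambda_N$, contradicting the choice of~$N$. Hence $w\in\Lambda_\infty$, and the same computation $w=kw_\infty$ now gives $2k=1$, again impossible.

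For part~(2), once $w_\infty$ is known to be primitive I extend it to a $\Z$-basis $\{w_\infty,v\}$ of $\Lambda_0$. A direct calculation gives $\ideal{w_\infty}+2^n\Lambda_0=\Z w_\infty\oplus 2^n\Z v$, which has index exactly $2^n$ in $\Lambda_0$. Since both $w_\infty$ and $2^n\Lambda_0$ lie in $\Lambda_n$, and $[\Lambda_0:\Lambda_n]=2^n$, the inclusion $\ideal{w_\infty}+2^n\Lambda_0\subseteq\Lambda_n$ is forced to be an equality.

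The main obstacle is the $p=2$ case of primitivity: unlike for odd primes, one cannot invert~$2$ in the $2$-group $\Lambda_0/\Lambda_n$, so some finer analysis of the chain is required. Passing one level deeper to $\Lambda_{N+1}$ and exploiting that $\Lambda_{N-1}/\Lambda_{N+1}$ is cyclic of order \emph{exactly} $4$---so that its $2$-torsion is forced to sit inside $\Lambda_N$---is the key ingredient; everything else in the argument is routine.
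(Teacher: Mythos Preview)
Your proof is correct and follows essentially the same approach as the paper's: the odd-prime case by invertibility in the $2$-group $\Lambda_0/\Lambda_n$, the $p=2$ case by locating the least $N$ with $w\notin\Lambda_N$ and deriving a contradiction from cyclicity, and part~(2) by an index count using $2^n\Lambda_0\subseteq\Lambda_n$. The only minor difference is in how the $p=2$ contradiction is extracted: you use that $\Lambda_{N-1}/\Lambda_{N+1}$ is cyclic of order~$4$ so its $2$-torsion is $\Lambda_N/\Lambda_{N+1}$, forcing $w\in\Lambda_N$; the paper instead observes that $\Lambda_{n+1}=\ideal{w_\infty}+2\Lambda_n=2(\ideal{w}+\Lambda_n)\subseteq 2\Lambda_0$, which contradicts cyclicity of $\Lambda_0/\Lambda_{n+1}$.
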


\begin{proof}
Suppose that $w_\infty=mw$ for some $m\ge1$ and $w\in\Lambda_0$.  If
$m$ is odd, then since $\Lambda_0/\Lambda_n$ has order $2^n$ which is
prime to $m$, we see that
$$
mw\in\Lambda_n\implies w\in\Lambda_n
$$
for all $n$, so that $w\in\Lambda_\infty$.  Hence (by definition
of $w_\infty$), $m=1$.

Next suppose that $w_\infty=2w$ for some $w\in\Lambda_0$.  By
definition of $w_\infty$, we then have $w\notin\Lambda_\infty$, and
hence there exists $n>0$ such that $w\notin\Lambda_n$. This implies
that $w_\infty\in\Lambda_{n}\setminus2\Lambda_{n}$. But since
$w_\infty\in\Lambda_{n+1}$, we have
$$
\Lambda_{n+1} = \ideal{w_\infty}+2\Lambda_{n} =
\ideal{2w}+2\Lambda_{n+1}\subseteq2\Lambda_0,
$$
which contradicts the definition of a chain. This proves the
first statement.

The second statement follows from the fact
that $\Lambda_n/2^n\Lambda_0$ is cyclic of order $2^n$, and is generated
by $w_\infty$ modulo $2^n\Lambda_0$, since $w_\infty$ is primitive.
\end{proof}

So far, our notion of a good chain has been defined as a property of
the chain as a whole, and only used the abstract structure of lattices
as free $\Z$-modules. Using the next definition, we will see that this
property can also be seen in terms of the individual steps
$\Lambda_{n}\supset\Lambda_{n+1}$, when the lattices are embedded in
$\C$.  In view of \eqref{eqn:next-lambda}, the choice of
$\Lambda_{n+1}$ is determined by the class of $w$ modulo $2\Lambda_n$.

For $n\ge1$, we say that $\Lambda_{n+1}\subset\Lambda_n$ is the
\emph{right choice} of sublattice of $\Lambda_n$ if
$\Lambda_{n+1}=\ideal{w}+2\Lambda_n$
where $w$ is a \emph{minimal} element in $\Lambda_n\setminus2\Lambda_{n-1}$
(with respect to the usual complex absolute value).

\begin{lemma}\label{lem:minimal}
Let $(\Lambda_n)_{n=0}^\infty$ be a good chain with $\Lambda_\infty =
\ideal{w_\infty}$.  Then $w_\infty$ is minimal in $\Lambda_n$ for all
but finitely many $n\ge0$.
\end{lemma}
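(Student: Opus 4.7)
My plan is to combine the description $\Lambda_n = \ideal{w_\infty} + 2^n\Lambda_0$ from Lemma~\ref{lem:primitive}(2) with a simple projection argument. Writing an arbitrary nonzero $w\in\Lambda_n$ as
\[
w = a w_\infty + 2^n v, \qquad a\in\Z,\; v\in\Lambda_0,
\]
and introducing the $\R$-linear orthogonal projection $P\colon\C\to\R$ perpendicular to $\R w_\infty$, we have $P(w_\infty)=0$ and hence $|w|\ge|P(w)| = 2^n|P(v)|$. The geometric growth factor $2^n$ is then built in for free, and everything reduces to controlling $|P(v)|$ from below.

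The core observation is that $P(\Lambda_0)$ is a discrete rank-$1$ subgroup of $\R$, hence of the form $d\Z$ for some $d>0$. Here the primitivity of $w_\infty$ from Lemma~\ref{lem:primitive}(1) is essential: it forces $\Lambda_0\cap\ker P = \Lambda_0\cap\R w_\infty = \Z w_\infty$ (because the discrete subgroup $\Lambda_0\cap\R w_\infty$ of $\R w_\infty$ is cyclic and contains the primitive element $w_\infty$), so $P(\Lambda_0)\cong\Lambda_0/\Z w_\infty$ has rank exactly~$1$. Without primitivity the image could fail to be discrete, and no uniform lower bound on $|P(v)|$ would be available.

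With discreteness in hand I would split cases according to whether $v\in\Z w_\infty$. If yes, say $v=cw_\infty$, then $w=(a+2^nc)w_\infty$ is a nonzero integer multiple of $w_\infty$, giving $|w|\ge|w_\infty|$. Otherwise $P(v)\ne0$, so $|P(v)|\ge d$ and $|w|\ge 2^n d$; choosing any integer $N$ with $2^N d\ge|w_\infty|$, every nonzero $w\in\Lambda_n$ satisfies $|w|\ge|w_\infty|$ for all $n\ge N$, which is the desired minimality. The argument is elementary lattice geometry once Lemma~\ref{lem:primitive} has been invoked; the one point that really deserves care is the deduction of discreteness of $P(\Lambda_0)$ from primitivity, since this is precisely what converts the factor~$2^n$ into a genuine lower bound that eventually dwarfs $|w_\infty|$.
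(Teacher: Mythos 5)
Your argument is correct, and it takes a genuinely different route from the paper's. The paper's proof is a finiteness argument with no explicit estimate: since $\Lambda_0$ is discrete there are only finitely many periods $w$ with $0<|w|<|w_\infty|$, and each such $w$ lies in only finitely many $\Lambda_n$ (because $w\notin\Lambda_\infty=\ideal{w_\infty}$), so past some $n_0$ none of them survive and $w_\infty$ becomes minimal. You instead use the explicit description $\Lambda_n=\ideal{w_\infty}+2^n\Lambda_0$ and project orthogonally to $\R w_\infty$, turning primitivity of $w_\infty$ (which gives $\Lambda_0\cap\R w_\infty=\Z w_\infty$) into discreteness of the projected image $P(\Lambda_0)=d\Z$, and then extracting the quantitative bound $|w|\ge 2^n d$ for $w\notin\Z w_\infty$. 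Your version has the advantage of giving an explicit threshold $N$ (any $N$ with $2^N d\ge|w_\infty|$) rather than a pure existence statement, and the case split $v\in\Z w_\infty$ vs.\ $P(v)\ne 0$ is handled cleanly (if $P(v)=0$ then $v\in\ker P\cap\Lambda_0=\Z w_\infty$, forcing $w\in\Z w_\infty$). The paper's argument is shorter and avoids introducing the projection, but your reliance on Lemma~\ref{lem:primitive} is sound and the geometric picture — that the transverse spacing grows like $2^n$ — is exactly what makes the chain behave like a shrinking cylinder around $\R w_\infty$. Incidentally, your proof is close in spirit to an alternative argument using $|\Im(w_2/w_1)|$ in place of your projection constant $d$; the two are essentially the same estimate in different coordinates.
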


\begin{proof}
Since $\Lambda_0$ is discrete, the number of periods $w\in\Lambda_0$
with $0<|w|<|w_\infty|$ is finite.  Each of these periods lies in
only finitely many~$\Lambda_n$ by minimality of~$w_\infty$
in~$\Lambda_\infty$, so there exists $n_0$ such that~$w_\infty$ is
minimal in $\Lambda_{n_0}$ and hence also in  $\Lambda_{n}$ for
all~$n\ge n_0$.
\end{proof}

The following proposition yields an alternative notion of a good
chain.  For now we remark that this is analogous to the definition of
a good AGM sequence in the previous section; more of its analogues
will be seen in later sections.
\begin{proposition}\label{prop:finitely-many-chain}
A chain of lattices $(\Lambda_n)_{n=0}^\infty$ is good if and only if
$\Lambda_{n+1}\subset\Lambda_n$ is the right choice for all but
finitely many $n\ge1$.
\end{proposition}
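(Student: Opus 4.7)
The plan is to handle the two directions separately. For ``good implies eventually right choice'', I would take a generator $w_\infty$ of $\Lambda_\infty$. By Lemma~\ref{lem:primitive}(1) it is primitive, so $w_\infty\notin 2\Lambda_0\supset 2\Lambda_{n-1}$; together with $w_\infty\in\Lambda_n$ this gives $w_\infty\in\Lambda_n\setminus 2\Lambda_{n-1}$ for every $n\ge 1$. Lemma~\ref{lem:minimal} then produces an $n_0$ such that $w_\infty$ is minimal in $\Lambda_n$ for $n\ge n_0$, and hence a fortiori minimal in the subset $\Lambda_n\setminus 2\Lambda_{n-1}$. Finally part~(2) of Lemma~\ref{lem:primitive} rewrites $\Lambda_{n+1}=\langle w_\infty\rangle+2^{n+1}\Lambda_0=\langle w_\infty\rangle+2\Lambda_n$, so $\Lambda_{n+1}$ is the right choice for all $n\ge n_0$.

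For the converse, suppose $\Lambda_{n+1}$ is the right choice for every $n\ge N$, and for each such $n$ choose a minimal $w_n\in\Lambda_n\setminus 2\Lambda_{n-1}$ realizing $\Lambda_{n+1}=\langle w_n\rangle+2\Lambda_n$. The crucial observation is that $w_{n-1}$ already lies in $\Lambda_n\setminus 2\Lambda_{n-1}$: it lies in $\Lambda_n$ by construction of $\Lambda_n$, and it is not in $2\Lambda_{n-1}$ because $w_{n-1}\notin 2\Lambda_{n-2}\supset 2\Lambda_{n-1}$. Minimality of $w_n$ then forces $|w_n|\le|w_{n-1}|$, so $(|w_n|)$ is non-increasing. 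Since each $w_n$ is a nonzero element of the discrete lattice $\Lambda_0$, the set $\{w\in\Lambda_0 : 0<|w|\le|w_N|\}$ is finite; thus $|w_n|$ is eventually constant at some $L>0$, and the $w_n$ lie in the finite set $W=\{w\in\Lambda_0 : |w|=L\}$ for all large $n$.

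Now if the chain were bad we would have $\Lambda_\infty=\{0\}$, so each of the finitely many elements of $W$ would lie in $\Lambda_n$ for only finitely many $n$; taking the maximum over $W$ produces an $M$ with $W\cap\Lambda_M=\{0\}$, which contradicts $0\ne w_n\in W\cap\Lambda_n$ whenever $n\ge\max(N,M)$. Hence the chain must be good. I expect this converse direction to be the main obstacle: one has to extract the non-increasing-$|w_n|$ phenomenon from the right-choice rule, and then combine the discreteness of $\Lambda_0$ with the hypothesized failure of goodness to rule out any stable oscillation of the $w_n$ through a finite collection of short periods.
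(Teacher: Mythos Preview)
Your proof is correct. The forward direction matches the paper's argument, with slightly more explicit bookkeeping via Lemma~\ref{lem:primitive} to pin down why $w_\infty\in\Lambda_n\setminus 2\Lambda_{n-1}$.

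For the converse you take a genuinely different route. The paper reduces (without loss of generality) to $N=1$, fixes a \emph{single} minimal element $w_1\in\Lambda_1$, and shows by a direct induction that $w_1\in\Lambda_n$ for all $n$, thereby exhibiting the limiting period explicitly. You instead follow the whole sequence $(w_n)_{n\ge N}$ of minimal elements in $\Lambda_n\setminus 2\Lambda_{n-1}$, extract the monotonicity $|w_n|\le|w_{n-1}|$ from the observation that $w_{n-1}$ is itself a competitor in $\Lambda_n\setminus 2\Lambda_{n-1}$, and then use discreteness to trap all sufficiently large $w_n$ in a finite level set $W$; the contradiction with badness comes because in a bad chain every nonzero period must eventually leave $\Lambda_n$. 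The paper's argument is shorter and constructive (it names $w_\infty$ at the outset), while yours is an indirect argument that never needs to guess the limiting period and is perhaps more transparently driven by the discreteness of~$\Lambda_0$. One cosmetic slip: since $0\notin W$, your conclusion should read $W\cap\Lambda_M=\emptyset$ rather than $\{0\}$; the contradiction is of course unaffected.
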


\begin{proof}
Let $(\Lambda_n)_{n=0}^\infty$ be a good chain with
$\Lambda_\infty=\ideal{w_\infty}$. Then by Lemma \ref{lem:minimal},
there exists an integer $n_0$ such that $w_\infty$ is minimal in $\Lambda_n$
for all $n\ge n_0$. Since $\Lambda_{n+1}=\ideal{w_\infty}+2\Lambda_{n}$ for all
$n$, then by definition, $\Lambda_{n+1}\subset\Lambda_n$ is the right choice
for all $n\ge n_0$.

Conversely, suppose that $\Lambda_{n+1}\subset\Lambda_n$ is the right
choice for all $n\ge n_0$ (where $n_0\ge1$).  Without loss of
generality, we may suppose that $n_0=1$.  Let $w_1\in\Lambda_1$ be a
minimal element. Then $w_1$ is certainly primitive (as an element of
$\Lambda_1$, though not necessarily in $\Lambda_0$).  We claim that
$w_1\in\Lambda_n$ for all $n\ge1$, so that the chain is good with
limiting period $w_1$.

To prove the claim, suppose that $w_1\in\Lambda_j$ for all $j\le n$.
Then $\Lambda_n=\ideal{w_1}+2^{n-1}\Lambda_1$, since the latter is
contained in the former and both have index $2^{n-1}$ in $\Lambda_1$.
Hence $\Lambda_n=\ideal{w_1,2^{n-1}w_2}$, where $w_2\in\Lambda_1$ is
such that $\Lambda_1=\ideal{w_1,w_2}$.  The right sublattice
of $\Lambda_{n+1}$ is clearly $\ideal{w_1}+\Lambda_n$, by minimality
of $w_1$ (which is a candidate since
$w_1\in\Lambda_n\setminus2\Lambda_{n-1}$); in particular,
$w_1\in\Lambda_{n+1}$, as required.
\end{proof}

\subsection{Optimal Chains and Rectangular Lattices}
Let us define a lattice chain to be \emph{optimal} if
$\Lambda_{n+1}\subset\Lambda_n$ is the right choice for \emph{all}
$n\ge1$.  We will see that there is in general just one optimal chain
for each of the three choices of $\Lambda_1\subset\Lambda_0$.
In order to make the statement more precise, however, some preparation
is necessary. 

We say that a lattice $\Lambda\subset\C$ is \emph{rectangular} if it
has an ``orthogonal'' $\Z$-basis $\{w_1, w_2\}$, meaning  one which
satisfies $\Re(w_2/w_1)=0$.  For example, the period lattice of an
elliptic curve defined over $\R$ with positive discriminant is
rectangular, where an orthogonal basis is given by the least real
period and the least imaginary period.  In general, rectangular
lattices are homothetic to the period lattices of this family of
elliptic curves.

If $\{w_1,w_2\}$ is any $\Z$-basis for a lattice $\Lambda$, the three
non-trivial cosets of $2\Lambda$ in $\Lambda$ are $C_j=w_j+2\Lambda$
for $j=1, 2, 3$, where $w_3=w_1+w_2$.  By a \emph{minimal coset
  representative} in~$\Lambda$ we mean a minimal element of one of
these cosets.  Minimal coset representatives are always primitive; for
they are certainly not in $2\Lambda$, and if $w=mw'$ with $m\ge3$ odd,
then $|w'|<|w|$ while $w'$ is in the same coset as $w$.

\begin{lemma}\label{lem:ortho-basis}
In each coset $C_j$ the minimal coset representative is unique up to
sign, except in the case of a rectangular lattice with orthogonal
basis $\{w_1,w_2\}$ where the coset $C_3$ has four minimal vectors,
$\pm(w_1\pm w_2)$. 
\end{lemma}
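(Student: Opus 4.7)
The plan is to pass to a Gauss-reduced $\Z$-basis of $\Lambda$, determine the minimal coset representatives by direct computation, and then translate the result back to the given basis. Since the three non-trivial cosets of $2\Lambda$ in $\Lambda$ are intrinsic to $\Lambda$ and the choice of basis only permutes the labels $C_1,C_2,C_3$, I would choose a $\Z$-basis $\{e_1,e_2\}$ with $\tau:=e_2/e_1$ in the standard fundamental domain of $\SL_2(\Z)$, i.e.\ $\Im(\tau)>0$, $|\Re(\tau)|\le\tfrac12$, and $|\tau|\ge1$; these together force $\Im(\tau)^2\ge\tfrac34$.

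I would then treat each coset by parameterizing its elements and minimizing the squared absolute value. In $e_1+2\Lambda$, a typical element is $\bigl((2m+1)+2n\tau\bigr)e_1$; for $n\ne0$ its squared absolute value is at least $4n^2\Im(\tau)^2|e_1|^2\ge 3|e_1|^2$, strictly larger than $|e_1|^2$ (attained at $n=0,\;2m+1=\pm1$), so $\pm e_1$ are the only minima. A similar analysis of $e_2+2\Lambda$ excludes contributions with $|2n+1|\ge3$, whose squared absolute value is at least $9\Im(\tau)^2|e_1|^2>|\tau|^2|e_1|^2$ (because $8\Im(\tau)^2>\Re(\tau)^2$); on the lines $2n+1=\pm1$ the choice $m=0$ is optimal since $|\Re(\tau)|\le\tfrac12$, giving unique minima $\pm e_2$. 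For $(e_1+e_2)+2\Lambda$ the same exclusion leaves $2n+1=\pm1$, whose minima over odd $2m+1$ are $\pm(e_1\pm e_2)$, with squared lengths $|e_1|^2\bigl(1\pm2\Re(\tau)+|\tau|^2\bigr)$.

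These two squared lengths coincide precisely when $\Re(\tau)=0$, i.e.\ exactly when $\{e_1,e_2\}$ is orthogonal and $\Lambda$ is rectangular. Hence, outside the rectangular case, every non-trivial coset has a unique minimum up to sign, and the lemma follows by relabeling back to the given basis $\{w_1,w_2\}$. In the rectangular case with $\{w_1,w_2\}$ the given orthogonal basis, the reduced basis $\{e_1,e_2\}$ coincides with $\{w_1,w_2\}$ up to a possible swap (depending on which of $|w_1|,|w_2|$ is smaller), and either way $w_3=w_1+w_2=e_1+e_2$ generates the exceptional coset, whose four minimal elements are $\pm(w_1\pm w_2)$.

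The main obstacle is bookkeeping rather than substance: the strict inequalities used to eliminate $|n|\ge1$ or $|2n+1|\ge3$ all rely on the reduced-basis constraints $|\Re(\tau)|\le\tfrac12$ and $\Im(\tau)^2\ge\tfrac34$, and must be verified carefully in each coset. A minor subtlety is that the orthogonal basis of a rectangular lattice may have $|w_1|>|w_2|$, but this only interchanges $C_1$ and $C_2$ and leaves the description of $C_3$ unaffected by symmetry in $w_1,w_2$.
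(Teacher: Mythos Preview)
Your argument is correct, but it differs from the paper's. You fix a Gauss-reduced basis and explicitly locate the minimal elements of each coset using the constraints $|\Re\tau|\le\tfrac12$ and $\Im(\tau)^2\ge\tfrac34$, finding that non-uniqueness occurs only in $C_3$ and only when $\Re\tau=0$. The paper instead argues contrapositively: supposing some coset has two pairs $\pm w,\pm w'$ of minimal elements, it sets $w_1,w_2=(w\pm w')/2$, observes from $|w|=|w'|$ that these are orthogonal, and then checks directly that $\{w_1,w_2\}$ is a $\Z$-basis (by showing $w_1,w_2,w$ lie in the three distinct nontrivial cosets and ruling out any smaller period in the parallelogram they span). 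Your route is more computational and yields the minimal representatives in every coset explicitly, which is pleasant; the paper's route is shorter and basis-free, recovering the orthogonal basis intrinsically from the assumed pair of minima rather than via reduction theory. Both are complete; yours just trades elegance for explicitness.
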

\begin{proof}
For a rectangular lattice with orthogonal
basis $\{w_1, w_2\}$, it is easy to see that the minimal coset representatives
are as stated. Conversely, suppose that the lattice $\Lambda$ has a coset $C$
with at least two pairs of minimal elements, $\pm w$ and $\pm w'$.
Then $w_1,w_2 = (w\pm w')/2\in\Lambda$ are easily seen to be
orthogonal. 

If $w_1\equiv0\pmod{2\Lambda}$, then $w_2\equiv w\pmod{2\Lambda}$. But
then $|w_2|<|w_1+w_2|=|w|$, contradicting minimality of $w$ in its
coset. Hence $w_1\not\equiv0\pmod{2\Lambda}$. Similarly,
$w_2\not\equiv0\pmod{2\Lambda}$.  Moreover, $w_1\not\equiv
w_2\pmod{2\Lambda}$ since
$w=w_1+w_2\equiv w_1-w_2\not\equiv0\pmod{2\Lambda}$. Therefore,
$w_1,w_2,w$ do represent the three non-trivial cosets modulo
$2\Lambda$. Now if $\{w_1,w_2\}$ was not a $\Z$-basis, there would
exist a non-zero period $w_0=\alpha w_1+\beta w_2$ with $0\le
\alpha,\beta <1$.  But then one of $w_0,w_0-w_1,w_0-w_2,w_0-w$ is in
the same coset as $w$, and all are smaller, contradiction.
\end{proof}

Our algorithm for computing periods of elliptic curves will in fact
compute minimal coset representatives.  Although these are
individually primitive, to ensure that we thereby obtain a $\Z$-basis
for the lattice, the following lemma is required.

\begin{lemma}\label{lem:z-basis}
For $j=1,2,3$, let $w_j$ be minimal coset representatives for a
non-rectangular lattice $\Lambda\subset\C$; that is, minimal elements
of the three non-trivial cosets of $2\Lambda$ in $\Lambda$.  Then any
two of the $w_j$ form a $\Z$-basis for $\Lambda$, and $w_3=\pm(w_1\pm
w_2)$.
\end{lemma}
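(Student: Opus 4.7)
My plan is to first show that one pair $\{w_i, w_j\}$ is a $\Z$-basis of $\Lambda$ via a shortest-vector argument, and then use the minimality of the third $w_k$ (together with Lagrange reduction) to pin down $w_k = \pm w_i \pm w_j$, from which all three pairs are bases. As preliminaries, the three non-trivial cosets are the non-zero elements of $\Lambda/2\Lambda \cong (\Z/2\Z)^2$, so $w_3 \equiv w_1 + w_2 \pmod{2\Lambda}$; also, any two of the $w_j$ are $\R$-linearly independent, since primitivity otherwise forces them to differ only by a sign and thus lie in the same coset.

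To locate a basis, I would let $v_1$ be a shortest non-zero vector of $\Lambda$. It belongs to some coset $C_j$, and the inequalities $|v_1|\le|w_j|\le|v_1|$ together with the uniqueness clause of Lemma \ref{lem:ortho-basis} (using non-rectangularity) give $v_1=\pm w_j$; relabel so that $w_1=v_1$. Next, let $v_2$ be a shortest element of $\Lambda\setminus\R w_1$; standard Lagrange reduction then says that $\{w_1, v_2\}$ is a $\Z$-basis of $\Lambda$ and furnishes the bound $2|\Re(w_1\bar v_2)|\le|w_1|^2$. Because $w_1$ and $v_2$ form a basis, their images in $\Lambda/2\Lambda$ are distinct and nontrivial, so $v_2$ lies in the coset of $w_2$ or $w_3$; combined with $|v_2|\le|w_j|$ (since $w_2, w_3\notin\R w_1$) and minimality of $w_j$ in its coset, Lemma \ref{lem:ortho-basis} forces $v_2=\pm w_j$. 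Swapping $w_2\leftrightarrow w_3$ if necessary (this preserves $w_3\equiv w_1+w_2\pmod{2\Lambda}$), I may assume $\{w_1, w_2\}$ is a $\Z$-basis of $\Lambda$.

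Write $w_3 = m w_1 + n w_2$ in this basis, where $m, n$ are odd by the coset condition. To show $|m|=|n|=1$, I assume $m\ge 3$ (the other signs and swapping the roles of $m, n$ are symmetric). The element $w_3' = w_3 - (m-1)w_1 = w_1 + n w_2$ represents the same coset, so minimality of $w_3$ gives $|w_3|^2 \le |w_3'|^2$, i.e., $(m+1)|w_1|^2 + 2n\Re(w_1\bar w_2) \le 0$. Combined with $2|\Re(w_1\bar w_2)| \le |w_1|^2 \le |w_2|^2$ this forces $|n|\ge m+1$ and (in the sub-case $|n|=1$) a direct contradiction. A symmetric comparison with $w_3'' = m w_1 + w_2$ yields $m\ge|n|+1$ whenever $|n|\ge 3$; the two inequalities cannot coexist. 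Hence $m, n\in\{\pm 1\}$, so $w_3=\pm w_1\pm w_2$, which makes $\{w_1, w_3\}$ and $\{w_2, w_3\}$ bases as well. I expect the main obstacle to be this final case analysis: carefully balancing the minimality of $w_3$ against the Lagrange bound to exclude all pairs $(m, n)$ with $\max(|m|, |n|)\ge 3$.
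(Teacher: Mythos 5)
Your proof is correct and follows essentially the same route as the paper's: both arguments identify (after reindexing and sign changes) two of the $w_j$ with a Gauss/Lagrange-reduced basis of $\Lambda$, using the uniqueness part of Lemma~\ref{lem:ortho-basis} for non-rectangular lattices, and then use minimality of $w_3$ in its coset to force $w_3=\pm w_1\pm w_2$. Your version replaces the paper's brief appeal to the standard fundamental region for $\SL_2(\Z)$ with an explicit coefficient inequality argument, which makes the final step more self-contained but reflects the same underlying idea.
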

\begin{proof}
We may assume that $|w_1|\le|w_2|\le|w_3|$. Then $w_1$ is minimal in
$\Lambda$ and $w_2$ is minimal in $\Lambda\setminus\ideal{w_1}$.
Hence (replacing $w_2$ by $-w_2$ if necessary) $\tau=w_2/w_1$ is in
the standard fundamental region for $\SL_2(\Z)$ acting on the upper
half-plane, $\{w_1,w_2\}$ is a $\Z$-basis, and $w_3=w_2\pm w_1$; the
sign depends on that of $\Re(\tau)$.
\end{proof}

The following proposition shows that the limiting period of an optimal chain
is closely related to minimal coset representatives.

\begin{proposition}\label{prop:min-coset-rep}
A good chain of lattices $(\Lambda_n)_{n=0}^\infty$ with
$\Lambda_\infty=\ideal{w_\infty}$ is optimal if and
only if $w_\infty$ is a minimal coset representative of $2\Lambda_0$
in $\Lambda_0$.
\end{proposition}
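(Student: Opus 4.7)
The plan is to reduce both directions to a single structural observation coming from Lemma~\ref{lem:primitive}: the lattices in the chain have the explicit form $\Lambda_n=\ideal{w_\infty}+2^n\Lambda_0$, so that $w_\infty$ represents the unique non-trivial coset of $2\Lambda_{n-1}$ in $\Lambda_n$ (note $[\Lambda_n:2\Lambda_{n-1}]=2$, and $w_\infty$ lies in $\Lambda_n$ but not in $2\Lambda_{n-1}\subseteq 2\Lambda_0$, since $w_\infty$ is primitive in $\Lambda_0$). Consequently
\[
\Lambda_n\setminus 2\Lambda_{n-1} \;=\; w_\infty + 2\Lambda_{n-1}\;\subseteq\;w_\infty+2\Lambda_0,
\]
and $\Lambda_{n+1}=\ideal{w_\infty}+2\Lambda_n$ is the right choice at step $n$ precisely when $w_\infty$ is a minimal element of $\Lambda_n\setminus 2\Lambda_{n-1}$. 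The proposition then becomes a comparison of minimality in the ``small'' sets $w_\infty+2\Lambda_{n-1}$ versus the ``large'' coset $w_\infty+2\Lambda_0$.

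The direction $(\Leftarrow)$ is then immediate: if $w_\infty$ is minimal in its coset modulo $2\Lambda_0$, it is a fortiori minimal in every subset $w_\infty+2\Lambda_{n-1}$, so the right choice is made at every step $n\ge 1$.

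For the direction $(\Rightarrow)$, suppose that $w_\infty$ is not minimal in $w_\infty+2\Lambda_0$, so there exists $w=w_\infty+2\mu\in\Lambda_0$ with $\mu\in\Lambda_0$ and $|w|<|w_\infty|$. The task is to locate an index $n\ge 1$ with $w\in\Lambda_n\setminus 2\Lambda_{n-1}$, which will contradict optimality. Using primitivity of $w_\infty$ (Lemma~\ref{lem:primitive}), complete $\{w_\infty,w_2\}$ to a $\Z$-basis of $\Lambda_0$ and write $\mu=aw_\infty+bw_2$, so that $w=(1+2a)w_\infty+2bw_2$. If $b=0$ then $w=(1+2a)w_\infty$, which contradicts $|w|<|w_\infty|$ because $|1+2a|\ge 1$. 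Otherwise set $n=v_2(b)+1\ge 1$, so $2b=2^n b'$ with $b'$ odd; using $\Lambda_n=\ideal{w_\infty,2^n w_2}$ and $2\Lambda_{n-1}=\ideal{2w_\infty,2^n w_2}$, the expression $w=(1+2a)w_\infty+b'\,2^n w_2$ shows that $w\in\Lambda_n$ while the odd coefficient of $w_\infty$ prevents $w\in2\Lambda_{n-1}$. Thus $w\in\Lambda_n\setminus 2\Lambda_{n-1}$ with $|w|<|w_\infty|$, contradicting the optimal hypothesis at step $n$.

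I expect the main obstacle to be the $(\Rightarrow)$ direction, specifically tracking the $2$-adic valuation in $b$ to pick out the correct index $n\ge 1$ and verifying carefully that the resulting $w$ lies in $\Lambda_n$ but escapes $2\Lambda_{n-1}$; everything else is bookkeeping from Lemma~\ref{lem:primitive}.
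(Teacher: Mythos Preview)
Your $(\Leftarrow)$ direction is fine, but the $(\Rightarrow)$ direction has a real gap. The biconditional you assert --- ``$\Lambda_{n+1}$ is the right choice at step~$n$ precisely when $w_\infty$ is a minimal element of $\Lambda_n\setminus 2\Lambda_{n-1}$'' --- is false in the $\Leftarrow$ direction you need. Optimality at step~$n$ only says that \emph{some} minimal element of $\Lambda_n\setminus 2\Lambda_{n-1}$ lies in the coset $w_\infty+2\Lambda_n$; it does not force $w_\infty$ itself to be minimal. So exhibiting a $w\in\Lambda_n\setminus 2\Lambda_{n-1}$ with $|w|<|w_\infty|$ is not by itself a contradiction: the true minimal element could still sit in $w_\infty+2\Lambda_n$, making $\Lambda_{n+1}$ a right choice anyway. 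Concretely, take $\Lambda_0=\Z+\Z i$ and $w_\infty=1+3i$ (primitive, not minimal in its coset). With $w_2=i$ your recipe on $w=1+i$ gives $n=1$, yet $\Lambda_2$ \emph{is} a right choice there, because the other minimal element $1-i$ lies in $w_\infty+2\Lambda_1$. The chain actually fails only at $n=2$.

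Most of your computation can be salvaged. First choose $w$ to be a \emph{minimal} element of $w_\infty+2\Lambda_0$; then $w$ is automatically minimal in the subset $\Lambda_n\setminus 2\Lambda_{n-1}$. Second, use the fact that $b'$ is odd not merely to place $w$ in $\Lambda_n\setminus 2\Lambda_{n-1}$, but to show $w\notin w_\infty+2\Lambda_n$ (since elements of $w_\infty+2\Lambda_n$ have $w_2$-coefficient divisible by $2^{n+1}$). These two observations together show the right choice at step~$n$ is \emph{not} $\Lambda_{n+1}$ --- except in the rectangular case where the relevant coset may have two pairs of minimal elements, one in each coset modulo $2\Lambda_n$; this case needs a separate argument (as in the paper's proof, which also singles it out). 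The paper's own argument runs forward rather than by contrapositive: it takes a minimal element of $\Lambda_1\setminus 2\Lambda_0$ and shows inductively, using optimality, that it lies in every $\Lambda_n$, hence equals $\pm w_\infty$.
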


\begin{proof}
Suppose that $w_\infty$ is a minimal coset representative.  Then it
is clear that
$\Lambda_{n+1}=\ideal{w_\infty}+2\Lambda_n\subset\Lambda_n$ is the
right sublattice for all $n\ge1$, since $w_\infty$ is certainly
minimal in $\Lambda_n\setminus2\Lambda_{n-1}$.

Conversely, suppose that the sequence is optimal. Let $w$ be a minimal
element of $\Lambda_1\setminus2\Lambda_0$, so that $w$ is a minimal
coset representative for the unique non-trivial coset of $2\Lambda_0$
which is contained in $\Lambda_1$.  Note that $w$ is unique up to
sign, unless $\Lambda_0$ is rectangular in which case (for one of the
cosets) there will be two possibilities for $w$ up to sign. By
optimality, the sublattice $\Lambda_2\subset\Lambda_1$ is the right
choice.  In particular, if $\Lambda_0$ is not rectangular, then we
must therefore have $\Lambda_2=\ideal{w}+2\Lambda_1$.  This, however,
may not hold in the rectangular case, but it will hold if we replace
$w$ by the other choice of minimal coset representative.

Now we claim that $\Lambda_n=\ideal{w}+2\Lambda_{n-1}$ for
all $n\ge2$. We already know this for $n=2$.  If the claim is true
for $n$, then certainly $w\in\Lambda_n\setminus2\Lambda_{n-1}$
(since $w\notin2\Lambda_0$), so the (unique) good choice of sublattice
of $\Lambda_n$ is $\ideal{w}+2\Lambda_n$. By optimality, this
is $\Lambda_{n+1}$, and so the claim holds for $n+1$. Thus
$w\in\bigcap_{n=0}^\infty \Lambda_n=\ideal{w_\infty}$, and indeed,
$w=\pm w_\infty$, since $w$ is primitive.
\end{proof}

Combining Lemma \ref{lem:ortho-basis} with Proposition
\ref{prop:min-coset-rep}, we have the following conclusion.
\begin{corollary}\label{cor:lattice-conclude}
Every non-rectangular lattice $\Lambda$ has precisely three optimal
sublattice chains, whose limiting periods are the minimal coset
representatives in each of the three non-zero cosets of $2\Lambda$
in $\Lambda$. Every rectangular lattice $\Lambda$ has precisely four
optimal sublattice chains.
\end{corollary}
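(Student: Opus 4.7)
The plan is to combine Proposition \ref{prop:min-coset-rep} with Lemma \ref{lem:ortho-basis} through a counting argument, after first pinning down precisely the correspondence between optimal chains and minimal coset representatives.

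The first step is to observe that the correspondence given by Proposition \ref{prop:min-coset-rep} is essentially a bijection between optimal chains starting at $\Lambda_0$ and $\pm$-classes of minimal coset representatives of $2\Lambda_0$ in $\Lambda_0$. Indeed, by Lemma \ref{lem:primitive}(2), any good chain is completely determined by its limiting period via the formula $\Lambda_n = \ideal{w_\infty} + 2^n\Lambda_0$, and clearly $w_\infty$ and $-w_\infty$ determine the same chain. Conversely, the construction embedded in the converse direction of Proposition \ref{prop:min-coset-rep} produces, for each minimal coset representative $w$, an optimal chain whose limiting period is $\pm w$. So counting optimal chains amounts to counting $\pm$-classes of minimal coset representatives.

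Next I would apply Lemma \ref{lem:ortho-basis} directly. In the non-rectangular case, each of the three non-trivial cosets $C_1,C_2,C_3$ of $2\Lambda$ in $\Lambda$ contains a unique pair $\pm w_j$ of minimal coset representatives, yielding exactly three $\pm$-classes and hence three optimal chains, whose limiting periods are $w_1,w_2,w_3$ as stated. In the rectangular case with orthogonal basis $\{w_1,w_2\}$, the cosets $C_1$ and $C_2$ each still contribute a single $\pm$-class, while $C_3$ now contains two distinct $\pm$-classes $\pm(w_1+w_2)$ and $\pm(w_1-w_2)$, giving $1+1+2=4$ optimal chains in total.

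The only point that really needs care is checking that the two minimal representatives $w_1+w_2$ and $w_1-w_2$ in the rectangular $C_3$ generate genuinely different optimal chains, rather than the same one; this follows at once from Lemma \ref{lem:primitive}(2), since the two chains have rank-one intersections $\ideal{w_1+w_2}$ and $\ideal{w_1-w_2}$ respectively, and these subgroups are distinct in $\Lambda_0$. With this verified, the corollary is established.
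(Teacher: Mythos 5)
Your proposal is correct and follows the same route the paper takes: the paper's proof of this corollary is just the one-line statement that it follows by combining Lemma~\ref{lem:ortho-basis} with Proposition~\ref{prop:min-coset-rep}. You have simply spelled out the implicit counting argument (the bijection between optimal chains and $\pm$-classes of minimal coset representatives via Lemma~\ref{lem:primitive}(2)) that the paper leaves to the reader.
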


\section{Short lattice chains and level 4 structures}

In this section we establish a link between AGM sequences and lattice
chains.  The first step is to associate a pair of nonzero complex
number~$(a,b)$ (with $a\not=\pm b$) to each ``short'' lattice chain
$\Lambda_0\supset\Lambda_1\supset\Lambda_2$ in such a way that $(a,b)$
is good in the sense of Section~\ref{sec:agmseq} if and only if
$\Lambda_2$ is the right choice of sublattice of~$\Lambda_1$, in the
sense of Section~\ref{sec:lattice-chain}.

We establish bijections between the following sets:
\begin{enumerate}
\item ``short'' lattice chains $\Lambda_0\supset\Lambda_2$ with
  $\Lambda_0/\Lambda_2$ cyclic of order~$4$;
\item triples $(E,\omega, H)$ where $E$ is an elliptic curve defined
  over~$\C$, $\omega$ a holomorphic differential on~$E$, and $H\subset
  E(\C)$ a cyclic subgroup of order~$4$;
\item unordered pairs of nonzero complex numbers ${a,b}$ with $a^2\ne
  b^2$, where the pairs ${a,b}$ and ${-a,-b}$ are identified.
\end{enumerate}
For each short lattice chain $\Lambda_0\supset\Lambda_2$, if we set
$\Lambda_1=\Lambda_2+2\Lambda_0$ then
$(\Lambda_0,\Lambda_1,\Lambda_2)$ satisfy the conditions for the first
three terms in a lattice sequence as defined earlier.  Hence we will
usually think of a short lattice chain as a triple
$\Lambda_0\supset\Lambda_1\supset\Lambda_2$, even though $\Lambda_1$
is uniquely determined by the other two.

To each short lattice chain we associate the elliptic curve
$E=\C/\Lambda_0$ with differential~$\omega=dz$ and subgroup
$H=(1/4)\Lambda_2/\Lambda_0$.  Conversely, to a triple $(E,\omega,H)$
we associate the chain $\Lambda_0\supset\Lambda_2$ where $\Lambda_0$
is the lattice of periods of~$\omega$ (so that
$E(\C)\cong\C/\Lambda_0$), and $\Lambda_2$ is the sublattice such that
$H\cong(1/4)\Lambda_2/\Lambda_0$ under this isomorphism.

Each triple $(E,\omega,H)$ has a model of the form  
\[
   E_{\{a,b\}}:\quad Y^2 = 4X(X+a^2)(X+b^2),
\]
for some unordered pair $a,b\in\C^*$ such that $a^2\not= b^2$, where
$\omega=dX/Y$, and $H$
is the subgroup generated by the point
\[
   P_{\{a,b\}} = (ab,2ab(a+b)).
\]
The four points $P_{\{\pm a,\pm b\}}$ are the solutions to
$2P=T=(0,0)\in E_{\{a,b\}}(\C)[2]$.  Interchanging $\{a,b\}$
and~$\{-a,-b\}$ does not affect the curve and interchanges
$P_{\{a,b\}}$ and $P_{\{-a,-b\}} =-P_{\{a,b\}}$ so does not
change~$H$.  On the other hand, changing the sign of just one of~$a,b$
changes~$H$ to the other cyclic subgroup of order~$4$ containing~$T$.
Hence the pair $\{a,b\}$ has the properties stated and is well-defined
up to changing the signs of both~$a$ and~$b$,

Conversely, given $\{a,b\}$ with $ab\not=0$ and $a\not=\pm b$, we
recover the triple $(E_{\{a,b\}},\omega,H)$, which is unchanged by
either interchanging $a$ and~$b$ or negating both.

If we only consider elliptic curves up to isomorphism, we may ignore
the differential~$\omega$, scale the equations arbitrarily, and
consider lattices only up to homothety.  Now we can identify pairs
$\{a,b\}$ and $\{u a,u b\}$ for all~$u\in\C^*$.  The equation
for~$E_{\{a,b\}}$ can be scaled so that $ab=1$, giving the homogeneous
form
\[
   E_f:\quad Y^2 = 4X(X^2+fX+1),
\] 
where 
\[
   f=\frac{a^2+b^2}{ab} = \frac{a}{b}+\frac{b}{a} \not= \pm2.
\]  
In this model, the points~$\pm P_{\{a,b\}}$ generating the
distinguished subgroup~~$H$ now have coordinates $(1,\pm2\sqrt{2+f})$.
Thus the pair $E,H$ uniquely determines a complex
number~$f\in\C\setminus\{\pm2\}$.  We call this~$f$ the \emph{modular
  parameter} for the level~$4$ structure, since (as we will see below)
it is in fact the value of a modular function for the congruence
subgroup~$\Gamma_0(4)$.

\begin{proposition}
The above constructions give a bijection between these sets:
\begin{enumerate}
\item ``short'' lattice chains $\Lambda_0\supset\Lambda_2$ up to
  homothety;
\item pairs $(E, H)$ where $E$ is an elliptic curve defined over~$\C$
  with $H\subset E(\C)$ a distinguished cyclic subgroup of order~$4$,
  up to isomorphism (where isomorphisms preserve the distinguished
  subgroups);
\item complex numbers~$f\in\C\setminus\{\pm2\}$.
\item points in the open modular
  curve~$Y_0(4)=\Gamma_0(4)\backslash\H$,  where $\H$ denotes the upper
  half-plane.
\end{enumerate}
\end{proposition}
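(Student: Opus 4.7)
The plan is to verify bijectivity for each adjacent pair in the chain $(1)\leftrightarrow(2)\leftrightarrow(3)\leftrightarrow(4)$, since the maps in both directions have essentially been spelled out in the preceding discussion; what remains is to check that these explicit constructions are well defined on the stated equivalence classes and are mutually inverse.

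For $(1)\leftrightarrow(2)$, I would invoke the uniformization theorem: lattices $\Lambda\subset\C$ up to homothety correspond bijectively to elliptic curves $E$ over $\C$ up to isomorphism via $\Lambda\mapsto\C/\Lambda$, and under this correspondence sublattices $\Lambda_2\subset\Lambda_0$ of index~$4$ with cyclic quotient match cyclic subgroups of $E(\C)$ of order~$4$ via $\Lambda_2\mapsto H=(1/4)\Lambda_2/\Lambda_0$. This is already the desired bijection.

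For $(2)\leftrightarrow(3)$, the core issue is well-definedness of~$f$. Starting from any model $E_{\{a,b\}}$ of~$E$ with $H$ generated by~$P_{\{a,b\}}$ and rescaling so that $ab=1$, one sets $f=a/b+b/a$. I would verify independence of choices: the unordered pair $\{a,b\}$ has already been seen to be determined by $(E,H,\omega)$ up to joint negation; swapping $a\leftrightarrow b$ merely permutes the summands of~$f$; and a rescaling of the differential jointly rescales $a,b$ by a common factor, leaving $f$ fixed. Hence $f\in\C\setminus\{\pm2\}$ is a well-defined invariant of~$(E,H)$. Conversely, from any $f\ne\pm2$ the pair $(E_f,H_f)$ with $E_f\colon Y^2=4X(X^2+fX+1)$ and $H_f$ generated by $(1,2\sqrt{2+f})$ recovers a pair in~(2). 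To prove injectivity I would observe that any isomorphism $E_f\to E_{f'}$ has the form $(X,Y)\mapsto(u^2X,u^3Y)$ with $u^4=1$, transforming $f$ into $u^{-2}f$; requiring the $X$-coordinate~$1$ of a generator of~$H$ to be preserved then forces $u^2=1$, whence $f'=f$.

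For $(3)\leftrightarrow(4)$, I would appeal to the classical moduli interpretation of~$Y_0(4)$: a point $\tau\in\H$ yields the pair $(E_\tau,H_\tau)=(\C/(\Z+\Z\tau),\,\langle\tau/4\rangle)$, and two such points give isomorphic pairs exactly when they lie in the same $\Gamma_0(4)$-orbit. Composing with $(2)\leftrightarrow(3)$, the assignment $\tau\mapsto f(\tau)$ descends to the required bijection $Y_0(4)\to\C\setminus\{\pm2\}$, exhibiting $f$ as a Hauptmodul for~$\Gamma_0(4)$. I expect the main obstacle to be the well-definedness in Step~$(2)\to(3)$: one must honestly account for every scaling and sign ambiguity in the Weierstrass equation and in the generator of~$H$, and rule out any hidden isomorphism of pairs $(E_f,H_f)$ which could collapse distinct values of~$f$. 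The remaining steps are either complex-analytic uniformization or standard moduli theory for $\Gamma_0(N)$.
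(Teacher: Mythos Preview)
Your approach is correct and aligns closely with the paper's: both treat $(1)\leftrightarrow(2)\leftrightarrow(3)$ as already handled by the preceding discussion and focus effort on linking these to~$(4)$. The differences are stylistic rather than substantive. The paper establishes $(1)\leftrightarrow(4)$ directly, by choosing an oriented basis $w_1,w_2$ of~$\Lambda_0$ with $\Lambda_2=\ideal{w_1}+4\Lambda_0$ and checking that the admissible basis changes are exactly those with $(c,d)\equiv(0,\pm1)\pmod4$, i.e.\ elements of~$\Gamma_0(4)$; it then confirms that $f$ generates the function field of the genus-zero curve~$X_0(4)$ via the explicit formula $j(E_f)=256(f^2-3)^3/(f^2-4)$. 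You instead invoke the standard moduli interpretation of~$Y_0(4)$ for $(2)\leftrightarrow(4)$ and obtain the Hauptmodul property of~$f$ as a consequence of the composed bijection. Your hands-on injectivity argument for $(2)\to(3)$ (via isomorphisms $(X,Y)\mapsto(u^2X,u^3Y)$ with $u^4=1$) is a nice addition the paper leaves implicit; just note that ruling out translations $X\mapsto X+r$ uses that the isomorphism must send the unique $2$-torsion point $(0,0)\in H_f$ to $(0,0)\in H_{f'}$.

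One small slip: with the paper's conventions ($\Lambda_2=\ideal{w_1}+4\Lambda_0$ and $\tau=w_2/w_1$), the subgroup is $H_\tau=\langle 1/4\rangle$, not $\langle\tau/4\rangle$; your choice would give the conjugate group~$\Gamma^0(4)$ rather than~$\Gamma_0(4)$. This is harmless up to the isomorphism $\tau\mapsto-1/\tau$, but worth correcting to match the statement.
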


\begin{remark}
It would appear that considering pairs~$(E,P)$, where $P$ is a point
of exact order~$4$ in~$E(\C)$, would give a refinement to the
level~$4$ structure, corresponding to points on the modular
curve~$Y_1(4)=\Gamma_1(4)\backslash\H$, since
$[\Gamma_0(4):\Gamma_1(4)]=2$.  However, this is an illusion: since
every~$E$ has an automorphism $[-1]$ which takes~$P$ to~$-P$, the set
of pairs~$(E,H)$ (up to isomorphism) may be identified with the space
of pairs~$(E,P)$ (also up to isomorphism).  Similarly, since
$\Gamma_0(4)=(\pm I)\Gamma_1(4)$,  we may identify $Y_1(4)$
and~$Y_0(4)$.
\end{remark}
\begin{proof}
Bijections between all sets except the last have already been
established.

Up to homothety, the lattice~$\Lambda_0$ is determined by $\tau\in\H$
modulo the action of the modular group~$\Gamma=\SL(2,\Z)$: for any
oriented basis $w_1,w_2$ of~$\Lambda_0$ (where ``oriented'' means
$w_2/w_1\in\H$) we associate~$\tau=w_2/w_1\in\H$.  All oriented bases
$w_1',w_2'$ of~$\Lambda_0$ have the form
\[
\begin{aligned}
w_2' &= aw_2+bw_1\\
w_1' &= cw_2+dw_1\\
\end{aligned}
\]
with\footnote{The reason for ordering bases this way is to maintain
  consistency with other sections.}
$\gamma=\begin{pmatrix}a&b\\c&d \end{pmatrix}\in\Gamma$, and
$\tau'=w_2'/w_1'=(a\tau+b)/(c\tau+d)$.  To allow for the additional
level~$4$ structure, we restrict to oriented bases $w_1,w_2$ such that
$\Lambda_2=\ideal{w_1}+4\Lambda_0$, so that $w_1',w_2'$ is only
admissible if $w_1\equiv\pm w_1'\pmod{4\Lambda_0}$, or equivalently
$(c,d)\equiv(0,\pm1)\pmod{4}$.  This uniquely determines the
$\Gamma_0(4)$-orbit of~$\tau$ and not just its~$\Gamma$-orbit.

Let $Y_0(4)=\Gamma_0(4)\backslash\H$ denote the open modular curve
associated to~$\Gamma_0(4)$, and $X_0(4)$ its completion, obtained by
including the three cusps (represented by $\infty$, $0$
and~$1/2\in\P^1(\Q)$), which has genus~$0$.  Hence the function field
of~$X_0(4)$ is generated by a single function.  Since the
$j$-invariant of $E_f$ is $256{(f^2-3)^3}/{(f^2-4)}$, we see that
$f=f(\tau)$ is a suitable function.  This establishes the claim
concerning~$f$, and completes the proof of the proposition.
\end{proof}

\begin{remark}
There is an involution on each of these sets, which preserves the
level~$2$ structure but interchanges the two possible associated
level~$4$ structures.  In each of the sets this takes the following
forms: replace $\Lambda_2$ by the other sublattice~$\Lambda_2'$ of
index~$2$ in~$\Lambda_1$ such that~$\Lambda_0/\Lambda_2'$ is cyclic;
replace $H$ by the other subgroup~$H'$ which is cyclic of order~$4$
and contains~$T=(0,0)$; change the sign of one of~$a,b$; or change $f$
to~$-f$.  This involution comes from the nontrivial automorphism of
the cover $X_0(4)\rightarrow X_0(2)$ of degree~$2$; the function field
of~$X_0(2)$ is generated by~$f^2$.
\end{remark}
\begin{remark}
Since $\begin{pmatrix}2&0\\0&1\end{pmatrix}
  \Gamma_0(4) \begin{pmatrix}2&0\\0&1\end{pmatrix}^{-1} = \Gamma(2)$,
    the function field of $X_0(4)$ may also be generated by
    $\lambda(2\tau)$ where $\lambda(\tau)$ is the classical Legendre
    elliptic function which generates the function field of~$X(2)$.  A
    calculation shows that
    $f(\tau)=2(1+\lambda(2\tau))/(1-\lambda(2\tau))$.  One
    interpretation of this is that instead of parametrizing short
    lattice chains by the parameter~$\tau\in Y_0(4)$ corresponding
    to~$\Lambda_0$ with $\Gamma_0(4)$-structure, we could instead have
    used the parameter $2\tau\in Y(2)$ to parametrize the middle
    lattice~$\Lambda_1$ with full level~$2$-structure given by the
    sublattices $\frac{1}{2}\Lambda_0$ and~$\Lambda_2$.
\end{remark}

We now state the main result of this section.
\begin{theorem} \label{prop:good=right}
Let $\Lambda_0\supset\Lambda_1\supset\Lambda_2$ be a short lattice
chain corresponding to the unordered pair~$\{a,b\}$ and modular
parameter~$f$.  Then the following are equivalent:
\begin{enumerate}
\item $\Lambda_2$ is the right choice of sublattice of~$\Lambda_1$;
\item the pair $(a,b)$ is good;
\item $\Re(f)\ge0$.
\end{enumerate}
\end{theorem}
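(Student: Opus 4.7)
My plan is to verify (2)$\Leftrightarrow$(3) by a direct algebraic calculation and then reduce (1)$\Leftrightarrow$(3) to a check at a single test point.

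For (2)$\Leftrightarrow$(3), I would set $t=a/b$, so that $f=t+t^{-1}$. Then $\Re(t^{-1})=\Re(\bar t)/|t|^2=\Re(t)/|t|^2$ has the same sign as~$\Re(t)$, so $\Re(f)=\Re(t)(1+|t|^{-2})$ does too. Hence $\Re(f)\ge 0$ iff $\Re(t)\ge 0$ iff $\Re(b/a)=\Re(t^{-1})\ge 0$, which is the defining good condition.

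For (1)$\Leftrightarrow$(3), I would normalize so that $\Lambda_0=\Z+\tau\Z$, $\Lambda_1=\Z+2\tau\Z$, and $\Lambda_2=\Z+4\tau\Z$, with the alternative sublattice $\Lambda_2'=\langle 1+2\tau,2\rangle$.  The right-choice condition then becomes the explicit inequality
\[
\min_{m,n\in\Z}\bigl|(2m+1)+4n\tau\bigr|\;\le\;\min_{m,n\in\Z}\bigl|(2m+1)+(4n+2)\tau\bigr|,
\]
which in a suitable $\Gamma_0(4)$-fundamental domain reduces to a simple geometric condition on~$\tau$.  Meanwhile $\Re(f(\tau))\ge 0$ is separately computable using the formula $f(\tau)=2(1+\lambda(2\tau))/(1-\lambda(2\tau))$ from the preceding remark together with the functional equations of~$\lambda$.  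Both conditions define closed subsets of $Y_0(4)$, each swapped with its complement by the involution $\iota$ (which interchanges the two~$\Lambda_2$-choices and sends $f\mapsto -f$).

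To align the two subsets I would check one specific point.  At $\tau=i/2$ the candidate minima are $|1|=1$ and $|1+i|=\sqrt 2$, so $\Lambda_2=\Z+4\tau\Z$ is the right choice; and using $\lambda(i)=1/2$ one gets $f(i/2)=6$, so $\Re(f)>0$. The hard part will be arguing rigorously that two $\iota$-antisymmetric closed subsets of~$Y_0(4)$ agreeing at one point must coincide everywhere; this follows from identifying their common boundary with the fixed locus of~$\iota$ (the rectangular case, corresponding to $f\in i\R$) and from connectedness of each component of the complement.  A fully computational alternative would be to work out the theta-constant descriptions of both conditions explicitly and compare.
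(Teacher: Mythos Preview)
Your strategy is essentially the paper's: for (2)$\Leftrightarrow$(3) the paper also just writes $f=a/b+b/a$ and observes the signs match; and for (1)$\Leftrightarrow$(3) the paper too argues by continuity, the involution~$\iota$ (which swaps the two choices of~$\Lambda_2$ and sends $f\mapsto-f$), connectedness of the two pieces of a fundamental domain, and a single test point. The paper's test point is $\tau=i$ with $f(i)=3/\sqrt2$; yours at $\tau=i/2$ with $\lambda(i)=1/2$ giving $f=6$ is equally valid.

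Two points to tighten. First, ``fixed locus of~$\iota$'' is not the right phrase: $\iota$ is fixed-point free on~$Y_0(4)$ (it corresponds to $f\mapsto-f$, hence only $f=0$ would be fixed). What you actually need is the \emph{overlap} $A\cap\iota(A)$ of the closed region with its image, and you must show that this overlap is the same set for both conditions. Second, and this is the genuine gap: you assert but do not prove that these overlaps coincide, i.e.\ that ``$\Lambda_2$ and $\Lambda_2'$ are both right choices'' (the rectangular case) happens exactly when $\Re f=0$. The paper supplies this: writing $e_1=0$, $e_2e_3=1$, $e_2+e_3=-f$, one checks that $\Re f=0$ iff $e_2,e_3$ are purely imaginary of opposite sign, iff the three roots are collinear with $e_1=0$ between the other two, iff the period lattice is rectangular with the distinguished $2$-torsion point corresponding to the ``diagonal'' coset. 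Without this boundary identification your connectedness argument does not get off the ground, since two $\iota$-antisymmetric closed regions agreeing at one interior point need not coincide unless their boundaries match.
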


\begin{proof}
Equivalence of the second and third conditions is immediate from
$f=a/b+b/a$ since $(a,b)$ is good if and only if $\Re(a/b)\ge0$.  (In
terms of the Legendre function, the equivalent condition is that
$|\lambda(2\tau)|\le 1$.)  

For equivalence of the first condition, we need to work harder.
Recall that $\Lambda_2$ is the right choice of sublattice if
$\Lambda_2 = \ideal{w_1}+2\Lambda_1 = \ideal{w_1}+4\Lambda_0$, where
  $w_1$ is the minimal period in its coset modulo~$2\Lambda_1$.  We
  now characterize this condition in terms of the imaginary part
  of~$\tau\in\H$.
\begin{lemma}
Let $w_1,w_2$ be any oriented basis for~$\Lambda_0$ such that
$\Lambda_2 = \ideal{w_1}+4\Lambda_0$, and let $\tau=w_2/w_1$.  The
following are equivalent:
\begin{enumerate}
\item $\Im\tau$ is maximal, over all~$\tau$ in its
  $\Gamma_0(4)$-orbit;
\item $|c\tau+d|\ge1$ for all coprime~$c,d\in\Z$ such that
  $(c,d)\equiv(0,\pm1)\pmod{4}$; 
\item $|w_1|$ is minimal, over all primitive periods of~$\Lambda_0$
  such that $\Lambda_2 = \ideal{w_1}+4\Lambda_0$; 
\item $|\tau+d/4|\ge1/4$ for all odd~$d\in\Z$.
\end{enumerate}
\end{lemma}
\begin{proof}
Equivalence of the first two statements follows from
$\Im(\gamma\tau)=\Im\tau/|c\tau+d|^2$ for
$\gamma=\begin{pmatrix}a&b\\c&d \end{pmatrix}$.  Since
$|c\tau+d|\ge1\iff|cw_2+dw_1|\ge|w_1|$, the third statement is also
equivalent to these.  For the last statement, consider the geometry of
the upper half-plane: the region given by these conditions are the
same: (4) states that $\tau$ lies on or above all the semicircles
centred on rationals with denominator~$4$ and with radius~$1/4$, while
(2) says that $\tau$ lies above all semicircles centred at
rationals~$-d/c$ with radius~$1/c$ for which
$(c,d)\equiv(0,\pm1)\pmod{4}$;  as the semicircles for $c>4$ lie
strictly under those for $c=4$, this is no stronger.
\end{proof}

We denote by~$\F(4)$ the set of~$\tau$ which satisfy these conditions;
that is, those for which $\Im\tau$ is maximal in
a~$\Gamma_0(4)$-orbit.  The subset of~$\F(4)$ consisting of $\tau$
such that $0\le\Re\tau\le1$ is a (closed) fundamental region
for~$\Gamma_0(4)$.  Since $\Gamma_0(4)$ has index~$2$
in~$\Gamma_0(2)$, the region $\F(4)$ decomposes into two components,
which we denote~$\F^{\pm}(4)$: the first is $\F^+(4)=\F(2)$,
consisting of all $\tau$ lying on or above all semicircles of radius
$1/2$ centred at rationals with denominator~$2$; a closed fundamental
region for~$\Gamma_0(2)$ is the subset of~$\F(2)$ consisting of $\tau$
such that $0\le\Re\tau\le1$.  Secondly,
$\F^-(4)=\overline{\F(4)\setminus\F(2)}$.  The boundary between these
is $\F^+(4)\cap\F^-(4)$, which consists of the union of the
semicircles $|\tau+d/2|=1/2$ for all odd~$d\in\Z$.

The lemma above shows that $\Lambda_2$ is the right choice if and only
if the $\tau$ for which $\Im\tau$ is maximal over all~$\tau$ in its
$\Gamma_0(4)$-orbit is also maximal in the larger $\Gamma_0(2)$-orbit;
in other words, given that $\tau\in\F(4)$, we in fact have
$\tau\in\F^+(4)$.

The following lemma then completes the proof of the theorem.
\end{proof}
\begin{lemma}
Let $\tau\in\F(4)$.  Then
\[
    \Re f(\tau) \ge 0 \iff \tau\in\F^+(4).
\]
\end{lemma}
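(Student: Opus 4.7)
My plan is to use the alternative hauptmodul $\lambda(2\tau)$ for~$\Gamma_0(4)$ mentioned in the second remark and reduce the lemma to a statement about the level set $\{|\lambda|=1\}$ inside a $\Gamma(2)$-fundamental domain. From $f(\tau)=2(1+\lambda(2\tau))/(1-\lambda(2\tau))$, the Möbius map $z\mapsto 2(1+z)/(1-z)$ sends $\{|z|\le1\}$ bijectively onto the closed right half-plane $\{\Re w\ge0\}$, and the unit circle onto the imaginary axis. Hence $\Re f(\tau)\ge0 \iff |\lambda(2\tau)|\le1$, so it suffices to show that for $\tau\in\F(4)$, $|\lambda(2\tau)|\le1 \iff \tau\in\F^+(4)$.

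Set $\sigma=2\tau$. Since conjugation by $\begin{pmatrix}2&0\\0&1\end{pmatrix}$ carries $\Gamma_0(4)$ to $\Gamma(2)$, the region $2\F(4)$ is of maximal-$\Im$ type for~$\Gamma(2)$. Using the translation $\tau\mapsto\tau+1$, I restrict to one period $0\le\Re\sigma\le2$: there the lower boundary of $2\F(4)$ is formed by the semicircles $|\sigma-1/2|=1/2$ and $|\sigma-3/2|=1/2$, while $2\F^+(4)$ is further cut out by $|\sigma-1|\ge1$; this truncated region is a standard closed fundamental domain $D$ for~$\Gamma(2)$.

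The key step is to show that inside $D$, $|\lambda(\sigma)|=1$ holds exactly on the semicircle $|\sigma-1|=1$. The functional equation $\lambda(\sigma/(\sigma+1))=1/\lambda(\sigma)$ (coming from the matrix $\begin{pmatrix}1&0\\1&1\end{pmatrix}$ in the $S_3$-action of $\SL_2(\Z)/\Gamma(2)$ on~$\lambda$), combined with the symmetry $\overline{\lambda(\sigma)}=\lambda(-\overline\sigma)$, yields
\[
\lambda\!\bigl(-\overline\sigma/(1-\overline\sigma)\bigr) = 1/\overline{\lambda(\sigma)}.
\]
Hence $|\lambda(\sigma)|=1$ at every fixed point of the anti-holomorphic involution $\iota\colon\sigma\mapsto-\overline\sigma/(1-\overline\sigma)$, whose fixed-point set is readily computed to be the circle $|\sigma-1|=1$. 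Conversely, $\lambda$ restricts to a holomorphic bijection from $D^\circ$ onto $\C\setminus\{0,1\}$, so the preimage of the punctured unit circle $\{|z|=1\}\setminus\{1\}$ is a single open arc; this arc must join the two representatives of the cusp $\lambda=1$ in $D$, which are $\sigma=0$ and $\sigma=2$, and hence coincides with the semicircle identified above (which is indeed internally tangent to the lower boundary of~$D$ at those two cusps).

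This semicircle separates $D$ into two connected open components, on each of which $|\lambda|-1$ is continuous and nonvanishing, so has constant sign. On the upper component, $\sigma\to i\infty$ gives $\lambda\to0$, so $|\lambda|<1$; on the lower, $\sigma\to 1$ (the cusp with $\lambda=\infty$) gives $|\lambda|>1$. The upper component is precisely $2\F^+(4)\cap\{0\le\Re\sigma\le2\}$, proving the lemma. The main obstacle is the uniqueness assertion in the identification of $\{|\lambda|=1\}$ inside $D$: establishing not only that $|\sigma-1|=1$ lies in this set, but also that no other component of it appears in $D$. This rests on the bijectivity of $\lambda$ on $D^\circ$ and the correct accounting of the cusps.
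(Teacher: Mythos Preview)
Your approach is genuinely different from the paper's and, modulo one gap you yourself flagged, works.  The paper identifies the level set $\{\Re f=0\}$ via the \emph{geometry of the roots}: $\Re f=0$ forces the three roots of $X(X^2+fX+1)$ to be collinear with $0$ between the other two, which is equivalent to the period lattice being rectangular with orthogonal basis $\{w_2,w_1+w_2\}$, which in turn pins $\tau$ to the semicircle $|2\tau-1|=1$.  Then it uses $f(\gamma\tau)=-f(\tau)$ for $\gamma\in\Gamma_0(2)\setminus\Gamma_0(4)$ and a single evaluation $f(i)=3/\sqrt2>0$ to decide the sign on each side.  Your route---passing to $\lambda(2\tau)$ and using the anti-holomorphic symmetry $\sigma\mapsto-\overline\sigma/(1-\overline\sigma)$ to see $|\lambda|=1$ on $|\sigma-1|=1$---is clean and more in the spirit of modular-function arguments; it also makes the equivalence $\Re f\ge0\iff|\lambda(2\tau)|\le1$ transparent.

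The gap is exactly where you placed it.  The assertion that $\lambda$ is a holomorphic \emph{bijection} from $D^\circ$ onto $\C\setminus\{0,1\}$ is false: $\lambda$ is injective on $D^\circ$, but orbits that meet $D$ only on $\partial D$ (for instance the orbit of $\sigma=i$, giving $\lambda=1/2$) have $\lambda$-values omitted from $\lambda(D^\circ)$.  Injectivity alone does not tell you the preimage of the punctured unit circle is a single arc.  The fix is short: first argue that the open semicircular arc $|\sigma-1|=1$ lies in $D^\circ$ (it is strictly above the two small bounding semicircles), and that $\lambda$ restricted to it is a continuous injection into $S^1$ whose two ends both limit to $\lambda=1$; such a map necessarily has image $S^1\setminus\{1\}$.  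Now injectivity on $D^\circ$ forces any other $\sigma'\in D^\circ$ with $|\lambda(\sigma')|=1$ to lie on the arc.  For $\sigma'\in\partial D$, use that $D$ is a genuine geodesic tiling domain for $\Gamma(2)$: the $\Gamma(2)$-orbit of a point of $D^\circ$ meets $D$ only at that point, so $\sigma'$ cannot be $\Gamma(2)$-equivalent to an interior point of the arc.  With this in place your connectivity argument goes through.
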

\begin{proof}
This will follow by continuity from the following facts, for
$\tau\in\F(4)$ with $0\le\Re\tau\le1$:
\begin{enumerate}
\item $\Re f(\tau)=0$ if and only if~$\tau$ lies on the semicircle
  $|2\tau-1| = 1$ which separates the interiors of~$\F^{\pm}(4)$.
\item $\Re f(\tau)>0$ for at least one $\tau\in\F^+(4)$.
\end{enumerate}
For the first fact implies that $\Re f(\tau)$ has constant nonzero sign
on the two interiors.  Since $f(\gamma\tau)=-f(\tau)$ for
$\gamma\in\Gamma_0(2)\setminus\Gamma_0(4)$, the signs are different in
the two interiors; and the second fact establishes that $\Re f(\tau)$
is positive in the interior of~$\F^+(4)$.

Let $f=f(\tau)$.  Let the roots of $X(X^2+fX+1)$ be~$e_1=0, e_2,e_3$.
Since $e_2e_3=1$, we have $\Re f=0$ if and only if $\Re e_2=\Re e_3=0$
with the imaginary parts $\Im e_2$, $\Im e_3$ of opposite sign; so
$e_1,e_2,e_3$ collinear, with $0$ in between the other two roots.
Conversely, if the $e_j$ are colinear with $0$ in the middle then
(since $e_2e_3=1$) it follows that~$\Re f=0$.  However, this alignment
of the roots happens precisely when the period lattice~$\Lambda_0$ is
rectangular, with orthogonal basis $w_2, w_1+w_2$, which is when
$\tau$ lies on the semicircle as claimed.  This establishes the first
fact.

Finally, one can check that for $i\in\F^+(4)$ we have
$f(i)=3/\sqrt{2}>0$ (equivalently,
$\lambda(2i)=(3-2\sqrt{2})/(3+2\sqrt{2})$ so that $|\lambda(2i)|<1$).
\end{proof}

\section{Chains of 2-Isogenies}\label{sec:isogeny}
From now on we will use standard Weierstrass models of elliptic curves
rather than the special forms $E_{\{a,b\}}$ used above.  Thus, let
$E_0$ be an elliptic curve over $\C$ given by a Weierstrass equation
\begin{equation}
E_0:\quad Y_0^2 = 4(X_0-e_1^{(0)})(X_0-e_2^{(0)})(X_0-e_3^{(0)}),\label{eqn:E0W}
\end{equation}
where the roots~$e_j^{(0)}$ are distinct, and $\sum_{j=1}^3
e_j^{(0)}=0$.  We consider the ordering of the roots~$e_j^{(0)}$ as
fixed, with the point $T_0=(e_1^{(0)},0)$ of order~$2$ as
distinguished.  Let
\[
  a_0 = \pm\sqrt{e_1^{(0)} - e_3^{(0)}},\quad 
  b_0 = \pm\sqrt{e_1^{(0)} - e_2^{(0)}};
\]
the choice of signs will be discussed below.  Via a shift of the
$X$-coordinate we have $E_0\cong E_{\{a_0,b_0\}}$, and the choice of
signs determines the point
$P_0=(e_1^{(0)}+a_0b_0,2a_0b_0(a_0+b_0))$ of order~$4$ such that
$2P_0=T_0$.

Now consider arbitrary AGM sequences $((a_n,b_n))_{n=0}^\infty$
starting from~$(a_0,b_0)$.  As in \cite{bos-mes}, for $n\ge1$ we let
\begin{equation}\label{eqn:next-ei}
e_1^{(n)} = \frac{a_{n}^2 + b_{n}^2}3,\quad
e_2^{(n)} = \frac{a_{n}^2 - 2b_{n}^2}3,\quad
e_3^{(n)} = \frac{b_{n}^2-2a_{n}^2}3.
\end{equation}
These equalities also hold for $n=0$, and for all~$n\ge0$
the~$e_j^{(n)}$ are distinct, and satisfy $\sum_{j=1}^3
e_j^{(n)}=0$. Hence each AGM sequence determines a sequence
$(E_n)_{n=0}^\infty$ of elliptic curves defined over~$\C$, where $E_n$
is given by the Weierstrass equation
\begin{equation}
E_n:\quad Y_n^2 = 4(X_n-e_1^{(n)})(X_n-e_2^{(n)})(X_n-e_3^{(n)}).\label{eqn:EnW}
\end{equation}
Each has a distinguished $2$-torsion point~$T_n=(e_1^{(n)},0)$.

For $n\ge1$,  define a $2$-isogeny $\varphi_n: E_n\to E_{n-1}$ via
$(x_n,y_n)\mapsto(x_{n-1},y_{n-1})$, where
\begin{equation}\label{eqn:xn-formula}
\begin{split}
x_{n-1} &= x_n + \frac{(e_3^{(n)}-e_1^{(n)})(e_3^{(n)}-e_2^{(n)})}
{x_n - e_3^{(n)}},\\
y_{n-1} &= y_n\left(1 - \frac{(e_3^{(n)}-e_1^{(n)})(e_3^{(n)}-e_2^{(n)})}
{(x_n - e_3^{(n)})^2}\right).
\end{split}
\end{equation}
Now $\ker(\varphi_n)=\ideal{(e_3^{(n)},0)}$, and
\[ 
   \varphi_n(T_n) = T_{n-1} = \varphi_n((e_2^{(n)},0)).
\]
The dual isogeny $\hat{\varphi}_n: E_{n-1}\to E_n$ has kernel
$\ideal{T_{n-1}} \not=\ker(\varphi_{n-1})$, so is distinct
from~$\varphi_{n-1}$.  Each composite~$\varphi_n\circ\varphi_{n+1}$ has
cyclic kernel, since 
$$
\varphi_n\left(\varphi_{n+1}\left(T_{n+1}\right)\right) = 
\varphi_n(T_{n}) = T_{n-1}\ne O.
$$ Similarly, by tracing the images of $T_{n}$, we see that all
composites of the $\varphi_n$ have cyclic kernels.

This \emph{chain of $2$-isogenies} may be depicted thus:
$$
\xymatrix{
\cdots & E_n\ar[l]^<{1}\ar[d]^<<{2}
\ar@<0.8ex>[r]^<{3}|{\varphi_n} & E_{n-1}\ar@<0.8ex>[l]^<{1}|{\hat{\varphi}_n}
\ar[d]^<<{2}\ar[r]^<{3} &
\cdots & E_1\ar[l]^<{1}\ar@<0.5ex>[r]^<{3}\ar[d]^<<{2}
& E_0\ar@<0.5ex>[l]^<{1}\\
 & & & & &}
$$ The number $j$ next to each arrow originating from $E_n$ denotes
the point $(e_j^{(n)},0)$ which generates the kernel of an associated
$2$-isogeny.

The construction of this isogeny chain from the original
curve~$E^{(0)}$ depends on many choices.  The definition of $a_0,b_0$
depends first on which root is labelled~$e_1^{(0)}$ (which
determines~$T_{0}$ and hence~$E_1$), and the order of labelling of
$e_2^{(0)}$ and~$e_3^{(0)}$.  Secondly, the signs for $a_0,b_0$ were
arbitrary; changing just one of them changes~$P_{0}$ and hence~$E_2$.
So, as in the previous section, the unordered pair $\{a_0,b_0\}$
determines the short isogeny chain $E_2\rightarrow E_1\rightarrow
E_0$, with $\{-a_0,-b_0\}$ determining the same short chain.  Finally,
for each~$a_0,b_0$, there are many AGM sequences, which determine the
rest of the chain.

Note that we can rewrite $e^{(n+1)}_j$ given by \eqref{eqn:next-ei} as
\[
e_1^{(n+1)} = \frac{e_1^{(n)} + 2a_{n}b_{n}}{4},\quad
e_2^{(n+1)} = \frac{e_1^{(n)} - 2a_{n}b_{n}}{4},\quad
e_3^{(n+1)} = \frac{-e_1^{(n)}}{2}.
\] 
If $(a_n, b_n)$ is replaced by~$(a_n, -b_n)$ for $n\ge1$, we see that
this interchanges $e_1^{(n+1)}$ and $e_2^{(n+1)}$ but leaves
$e_3^{(n+1)}$ unchanged.  This does not change the curve~$E_{n+1}$; it
only changes the labelling of its roots, which then changes~$E_{n+2}$.

Hence we have established a bijection between
\begin{itemize}
\item The set of all AGM sequences starting at $(a_0,b_0)$, and
\item The set of all isogeny chains starting with the short chain
$E_2\to E_1\to E_0$.
\end{itemize}

We now consider what happens when $n\to\infty$.  From
\eqref{eqn:next-ei}, we have
\begin{equation}\label{eqn:ei-limit}
\lim_{n\to\infty} e_1^{(n)} = \frac{2M^2}{3},\quad 
\lim_{n\to\infty} e_2^{(n)} = \lim_{n\to\infty} e_3^{(n)} = \frac{-M^2}{3},
\end{equation}
where $M=M_S(a_0,b_0)$ for some set $S\subseteq\Z_{>0}$.  The
``limiting curve'' $E_\infty$ for the sequence $(E_n)_{n=0}^\infty$ is
the singular curve
\begin{equation}\label{eqn:limit-curve}
E_\infty: \quad Y_\infty^2 = 4\left(X_\infty - \frac{2}{3}M^2\right)
                             \left(X_\infty + \frac{1}{3}M^2\right)^2.
\end{equation}
Proposition~\ref{prop:cox} implies the following.
\begin{proposition}
The singular point of~$E_{\infty}$ is a node if and only if the AGM
sequence $(a_n,b_n)$ is good.
\end{proposition}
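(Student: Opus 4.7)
The plan is to analyze the local structure of~$E_\infty$ at its singular point and relate the type of singularity directly to whether~$M=0$, then invoke Proposition~\ref{prop:cox}(2).

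First I would locate the singular point.  From~\eqref{eqn:limit-curve} the right-hand side factors as $4(X_\infty-2M^2/3)(X_\infty+M^2/3)^2$, with a \emph{double} root at $X_\infty=-M^2/3$; the other root $2M^2/3$ coincides with it precisely when $M=0$.  In either case the unique singular point on the affine curve is $(-M^2/3,0)$ (when $M\ne0$ this is the only candidate; when $M=0$ the three roots coalesce at the origin and the cubic becomes $Y_\infty^2=4X_\infty^3$, which is clearly singular only at the origin).

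Next I would translate so that the singular point lies at the origin.  Setting $U=X_\infty+M^2/3$, the defining equation becomes
\[
Y_\infty^2 = 4U^2(U-M^2),
\]
so the tangent cone (the lowest-degree homogeneous part of $Y_\infty^2-4U^2(U-M^2)$) is
\[
Y_\infty^2 + 4M^2 U^2.
\]
If $M\ne0$, this quadratic form factors over~$\C$ as $(Y_\infty-2iMU)(Y_\infty+2iMU)$ into two distinct linear forms, so the singularity is by definition a node.  If $M=0$, the tangent cone degenerates to $Y_\infty^2$, a repeated linear form, so the singularity is a cusp, not a node.

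Finally I would apply Proposition~\ref{prop:cox}(2), which states that $M=M_S(a_0,b_0)\ne0$ if and only if the AGM sequence is good.  Combining this with the case analysis above gives: the singular point of~$E_\infty$ is a node $\iff M\ne0\iff$ the sequence is good.  I do not anticipate any real obstacle; the only mild subtlety is making the ``only node vs.\ cusp'' dichotomy genuinely exhaustive, which is automatic because an irreducible plane cubic has no worse singularities than a node or a cusp, and here the curve is manifestly irreducible for every~$M$.
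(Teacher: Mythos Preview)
Your argument is correct and follows the same line as the paper: the paper simply asserts that the result is implied by Proposition~\ref{prop:cox} (specifically part~(2), which says $M\ne0$ iff the sequence is good), leaving the elementary observation that the singularity of~\eqref{eqn:limit-curve} is a node precisely when $M\ne0$ implicit. You have made that observation explicit via the tangent-cone computation, which is exactly the right way to fill in the gap.
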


\subsection{The associated lattice chain}

For each $n\ge0$, we have $E_n(\C)\cong\C/\Lambda_n$, where
$\Lambda_n$ is the lattice of periods of the
differential~${dX_n}/{Y_n}$.  From the definition of $\varphi_n$ (see
\eqref{eqn:xn-formula}), it can be verified that each~$\varphi_n$ is
\emph{normalised}, in the sense that
\begin{equation}\label{eqn:diff}
\varphi_{n}^*\left(\frac{dX_{n-1}}{Y_{n-1}}\right) =
\frac{dX_{n}}{Y_{n}}
\end{equation}
for all $n\ge1$.  Hence $\varphi_n$ corresponds to the map
$\C/\Lambda_n\to\C/\Lambda_{n-1}$ induced from the identity map
$\C\to\C$.  Since each $\varphi_n$ is a $2$-isogeny and the composites
of the $\varphi_n$ have cyclic kernels, it is clear that
$(\Lambda_{n})$ is a lattice chain in the sense of
Section~\ref{sec:lattice-chain}.

This establishes the commutativity of the diagram (Figure 1), which
shows the relationship between chains of lattices and chains of
$2$-isogenies.  For brevity we denote $z\mapsto (\wp_\Lambda(z),
\wp'_\Lambda(z))$ by~$z\mapsto\wp_n(z)$.

\begin{figure}[htp]
$$ \xymatrix{ \cdots\ar[r] & \C\ar[r]^{\mathrm{id}}\ar[d] &
    \C\ar[d]\ar[r] & \cdots\\ \cdots\ar[r] &
    \C/\Lambda_n\ar[r]\ar[d]^{\wp_n} &
    \C/\Lambda_{n-1}\ar[d]^{\wp_{n-1}} \ar[r]
    & \cdots\\ \cdots\ar[r] & E_n\ar[r]^{\varphi_n} & E_{n-1}\ar[r] &
    \cdots }
$$
\caption{A chain of isogenies linked with a chain of
  lattices}\label{fig:isogenies} 
\end{figure}

Conversely, given any lattice chain $(\Lambda_{n})$ starting
from~$\Lambda_0$, we may recover the sequence of curves~$E_n$ and the
chain of $2$-isogenies linking them: first, $\Lambda_1$ determines
which root of~$E_0$ is labelled~$e_1^{(0)}$; then $\Lambda_2$
determines the choice of signs in the definition of~$a_0,b_0$; and
finally the AGM sequence starting with $(a_0,b_0)$ is determined by
the $\Lambda_n$ for $n\ge2$.

Thus we have a third set in bijection with both the set of all AGM
sequences starting at $(a_0,b_0)$, and the set of all isogeny chains
starting with the short chain $E_2\to E_1\to E_0$: namely, the set of
all lattice chains starting with the short chain
$\Lambda_0\supset\Lambda_1\supset\Lambda_2$.

\begin{proposition} \label{prop:good=good}
With the above notation, for all~$n\ge0$,
\begin{enumerate}
\item $E_n\cong E_{\{a_n,b_n\}}$;
\item $\Lambda_{n}\supset\Lambda_{n+1}\supset\Lambda_{n+2}$ is a short
  chain in the sense of Section~3;
\item $\Lambda_{n+2}$ is the right choice of sublattice of
  $\Lambda_{n+1}$ if and only if $(a_n,b_n)$ is a good pair;
\item the lattice chain $(\Lambda_n)$ is good (respectively, optimal)
  if and only if the sequence $((a_n,b_n))$ is good  (respectively,
  optimal). 
\end{enumerate}
\end{proposition}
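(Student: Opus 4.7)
The plan is to prove the four parts in order, with most of the substance lying in part~(3).

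Part~(1) should reduce to a direct computation from~\eqref{eqn:next-ei}: one verifies that $e_1^{(n)}-e_2^{(n)}=b_n^2$ and $e_1^{(n)}-e_3^{(n)}=a_n^2$, so the shift $X_n\mapsto X_n-e_1^{(n)}$ converts~\eqref{eqn:EnW} into $Y^2 = 4X(X+b_n^2)(X+a_n^2)$, which is $E_{\{a_n,b_n\}}$; under this shift $T_n=(e_1^{(n)},0)$ is carried to~$(0,0)$.  Part~(2) is also routine: each~$\varphi_k$ is a normalised 2-isogeny by~\eqref{eqn:diff}, so under $E_k\cong\C/\Lambda_k$ it corresponds to the inclusion $\Lambda_{k+1}\subset\Lambda_k$ of index~$2$; and the cyclicity of~$\Lambda_n/\Lambda_{n+2}$ is dual to the (already-established) cyclicity of $\ker(\varphi_{n+1}\circ\varphi_{n+2})$.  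Hence $(\Lambda_n,\Lambda_{n+1},\Lambda_{n+2})$ satisfies the short-chain conditions.

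For part~(3), the plan is to apply Theorem~\ref{prop:good=right} to this short chain.  The bijection developed earlier between short chains, triples $(E,\omega,H)$, and unordered pairs~$\{a,b\}$ sends the chain to $(E_n,\omega_n,H_n)$ with $H_n=(1/4)\Lambda_{n+2}/\Lambda_n$, and then to some pair~$\{a,b\}$; the task is to show $\{a,b\}=\{a_n,b_n\}$.  Part~(1) already identifies the underlying curve, so what remains is to check that $H_n$ is generated by~$P_{\{a_n,b_n\}}$ and not by a 4-torsion point generating the other cyclic subgroup of order~$4$ above~$T_n$.  I would verify this in two steps.  First, using $\Lambda_{n+1}=\Lambda_{n+2}+2\Lambda_n$, compute $2H_n=(1/2)\Lambda_{n+1}/\Lambda_n$ and identify this 2-torsion subgroup with~$\langle T_n\rangle$ via the characterisation of~$T_n$ as the generator of~$\ker\hat{\varphi}_{n+1}$.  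Second, verify that the sign of~$b_{n+1}$ chosen by the AGM recursion is precisely the one selecting~$\Lambda_{n+2}$ (rather than the other admissible index-$2$ sublattice of~$\Lambda_{n+1}$) under the isogeny--lattice bijection of Section~\ref{sec:isogeny}.  Once $\{a,b\}=\{a_n,b_n\}$ is confirmed, Theorem~\ref{prop:good=right} converts goodness of~$(a_n,b_n)$ into the right-choice property for~$\Lambda_{n+2}\subset\Lambda_{n+1}$.

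Part~(4) then follows by applying~(3) at every level.  Combined with Proposition~\ref{prop:finitely-many-chain}, part~(3) gives the good-iff-good statement: $(a_n,b_n)$ good for all but finitely many~$n\ge0$ translates to $\Lambda_{m+1}$ being the right choice of~$\Lambda_m$ for all but finitely many~$m\ge1$, i.e.\ the chain is good.  The optimal case is analogous, level by level.  The main obstacle I expect is the bookkeeping in part~(3): tracking the sign ambiguity in the AGM step (which square root to take for~$b_{n+1}$) and matching it consistently with the two cyclic subgroups of order~$4$ above~$T_n$.  This is not deep, but requires careful unwinding of the triple bijection among AGM sequences, isogeny chains, and lattice chains developed over Sections~\ref{sec:agmseq}--\ref{sec:isogeny}.
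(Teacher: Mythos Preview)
Your proposal is correct and follows the same route as the paper: part~(1) by the coordinate shift $X_n\mapsto X_n-e_1^{(n)}$, and parts~(2)--(4) by invoking Theorem~\ref{prop:good=right} once the short chain $\Lambda_n\supset\Lambda_{n+1}\supset\Lambda_{n+2}$ is identified with the pair~$\{a_n,b_n\}$.  The paper's proof is terser only because the bookkeeping you flag in part~(3)---matching the sign choice in the AGM step to the choice of~$\Lambda_{n+2}$---is absorbed into the running discussion of Section~\ref{sec:isogeny} immediately preceding the proposition (note a small index slip: it is the sign of~$b_n$, not~$b_{n+1}$, that selects~$\Lambda_{n+2}$).
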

\begin{proof}
For (1), replace~$x_n$ by $x_n+e_1^{(n)}$ in the equation for $E_n$ to
obtain the equation for~$E_{\{a_n,b_n\}}$.  The rest is then is clear,
using Theorem~\ref{prop:good=right}.
\end{proof}

\section{Period Lattices of Elliptic Curves}\label{sec:periods}
\subsection{General Case}
Let $E_0$ be an elliptic curve over $\C$ of the form~(\ref{eqn:E0W}).
We keep the notation of the preceding section; in particular, the
period lattice of~$E_0$ is~$\Lambda_0$.  Each primitive
period~$w_1\in\Lambda_0$ determines a good lattice chain $(\Lambda_n)$
where $\Lambda_n=\ideal{w_1}+2^n\Lambda_0$, and conversely, since
$\cap_n\Lambda_n=\ideal{w_1}$.  So we have a bijection between the set
of primitive periods of~$\Lambda_0$ (up to sign) and good lattice
chains.  Each good lattice chain in turn determines a good AGM
sequence $((a_n,b_n))$ starting at a pair $(a_0,b_0)$ such that
$E_0\cong E_{\{a_0,b_0\}}$.

We now show that the period~$w_1$ may be expressed simply in terms of
the limit of the associated AGM sequence.  It will follow that every
primitive period~$w_1$ of $E_0$ may be obtained from the limit of an
appropriately chosen good AGM sequence.  Conversely, we may express
the set of all limits of AGM sequences starting at $(a_0,b_0)$ in
terms of periods of~$E_0$.  We will also show that optimal AGM
sequences give periods which are minimal in their coset
modulo~$4\Lambda_0$, and super-optimal sequences (where the initial
pair~$(a_0,b_0)$ also good) give periods which are minimal
modulo~$2\Lambda_0$. By Lemma~\ref{lem:z-basis}, we will be then able
to express a $\Z$-basis for~$\Lambda_0$ in terms of specific AGM
values.

\bigskip

\begin{proposition}\label{prop:wp-limits}
Let $(\Lambda_n)$ be a good lattice sequence with limiting period $w_1$
(generating $\cap\Lambda_n$, and defined up to sign).  Then for all
$z\in\C\setminus\Lambda_0$ we have
\[
\begin{aligned}
\lim_{n\to\infty} \wp_{\Lambda_n}(z) =
\left(\frac{\pi}{w_1}\right)^2\left(\frac{1}{\sin^2(z\pi/w_1)}-\frac{1}{3}\right) 
\\
\lim_{n\to\infty} \wp'_{\Lambda_n}(z) =
-2\left(\frac{\pi}{w_1}\right)^3\left(\frac{\cos(z\pi/w_1)}{\sin^3(z\pi/w_1)}\right).
\end{aligned}
\]
\end{proposition}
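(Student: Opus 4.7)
The plan is to pass to the limit directly in the Weierstrass series for $\wp_{\Lambda_n}(z)$, showing that only the periods in $\Z w_1$ survive in the limit, and then identifying the resulting ``degenerate $\wp$'' via the classical partial-fraction expansion of $1/\sin^2$.

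By Lemma~\ref{lem:primitive}, $w_1$ is primitive, so we may pick $w_2\in\Lambda_0$ with $\{w_1,w_2\}$ a $\Z$-basis of $\Lambda_0$; then $\Lambda_n=\Z w_1\oplus\Z(2^n w_2)$, and every nonzero $w\in\Lambda_n$ has a unique expression $w=mw_1+k\cdot 2^n w_2$ with $(m,k)\in\Z^2\setminus\{(0,0)\}$. Split the standard series
\[
\wp_{\Lambda_n}(z)=\frac{1}{z^2}+\sum_{w\in\Lambda_n\setminus\{0\}}\left(\frac{1}{(z-w)^2}-\frac{1}{w^2}\right)
\]
into the $k=0$ part (independent of $n$) and the $k\ne 0$ ``off-diagonal'' part.

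The main technical step is to show that the off-diagonal part tends to $0$ as $n\to\infty$. Using the elementary estimate $\bigl|(z-w)^{-2}-w^{-2}\bigr|=O(|z|/|w|^3)$ for $|w|\gg|z|$, it suffices to bound $\sum_{k\ne 0,\,m\in\Z}|mw_1+k\cdot 2^n w_2|^{-3}$. For each fixed $k\ne0$, setting $\alpha_{n,k}=k\cdot 2^n w_2/w_1$ and comparing the sum over $m$ with $\int|m+\alpha_{n,k}|^{-3}\,dm$, one gets a bound $O(|\Im\alpha_{n,k}|^{-2})=O(|k|^{-2}\,4^{-n})$, using $\Im(w_2/w_1)\ne0$; summing over $k\ne0$ yields an overall bound of order $4^{-n}$, uniformly for $z$ in compact subsets of $\C\setminus\Lambda_0$. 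This estimate is the only real work; everything else is formal manipulation.

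Once the off-diagonal part is discarded, the limit becomes the degenerate $\wp$
\[
\frac{1}{z^2}+\sum_{k\in\Z\setminus\{0\}}\left(\frac{1}{(z-kw_1)^2}-\frac{1}{(kw_1)^2}\right)=\sum_{k\in\Z}\frac{1}{(z-kw_1)^2}-\frac{2\zeta(2)}{w_1^2}.
\]
Substituting $u=z/w_1$ in Euler's identity $\sum_{k\in\Z}(u-k)^{-2}=\pi^2/\sin^2(\pi u)$ and using $2\zeta(2)=\pi^2/3$ yields the claimed formula for $\lim_{n\to\infty}\wp_{\Lambda_n}(z)$. For $\wp'$, the series $\wp'_{\Lambda_n}(z)=-2\sum_{w\in\Lambda_n}(z-w)^{-3}$ is absolutely convergent, and the same splitting works even more easily (no renormalisation is needed); alternatively one differentiates the already-proved formula for $\lim\wp_{\Lambda_n}$ term-by-term, which is justified by uniform convergence on compacta avoiding $\Z w_1$. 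Either route uses the identity $\sum_{k\in\Z}(u-k)^{-3}=\pi^3\cos(\pi u)/\sin^3(\pi u)$ (obtained by differentiating the $1/\sin^2$ identity) to produce the stated trigonometric expression.
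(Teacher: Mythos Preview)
Your proof is correct and follows essentially the same route as the paper: choose a basis $\{w_1,2^n w_2\}$ for $\Lambda_n$, split the Weierstrass series into the $k=0$ and $k\neq 0$ parts, discard the latter in the limit, and identify the surviving sum via Euler's partial-fraction expansion of $\pi^2/\sin^2(\pi u)$; for $\wp'$ the paper likewise suggests either differentiating or repeating the series argument. The one difference is that you actually justify the vanishing of the off-diagonal part with an explicit $O(4^{-n})$ estimate, whereas the paper simply asserts that ``all terms with $m_2\neq 0$ tend to zero''---your version is therefore more rigorous on exactly the point where the paper's proof is terse.
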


\begin{proof}
Since $w_1$ is primitive, there exists $w_2\in\C$ such that
$\Lambda_n=\ideal{w_1,2^nw_2}$ for all~$n\ge0$.  In the
standard series expansion
\[
   \wp_{\Lambda_n}(z) = \frac{1}{z^2} + \sum_{0\not=w\in\Lambda_n}\left(\frac{1}{(z-w)^2}-\frac{1}{w^2}\right),
\]
we set $w=m_1w_1+m_22^nw_2$ with $m_1,m_2$ not both zero.  As
$n\to\infty$ all terms with $m_2\not=0$ tend to zero, leaving
\[
  \lim_{n\to\infty} \wp_{\Lambda_n}(z) =
  \sum_{m\in\Z}\frac{1}{(z-mw_1)^2} - \frac{1}{3}\left(\frac{\pi}{w_1}\right)^2.
\]
Using the expansion $\pi^2/\sin^2(\pi z) = \sum_{m\in\Z}1/(z-m)^2$,
this simplifies to the formula given.

For $\lim_{n\to\infty} \wp'_{\Lambda_n}(z)$, we may either
differentiate this, or apply the same argument to the series expansion
of~$\wp'_{\Lambda_n}(z)$.
\end{proof}

\begin{corollary}\label{cor:one-period}
In the above notation, let $(\Lambda_n)$ be a (good) lattice chain,
with limiting period~$w_1$, associated to the elliptic curve~$E_0$ and
the (good) AGM sequence $((a_n,b_n))$ with non-zero limit
$M=M_S(a_0,b_0)$.  Then $M = \pm \pi/w_1$, so that the period~$w_1$
may be determined up to sign by
\[
    w_1 = \pm \pi/M_S(a_0,b_0).
\]
\end{corollary}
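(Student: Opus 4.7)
The plan is to exploit the two different descriptions of the limiting curve $E_\infty$ that the preceding sections provide: the Weierstrass equation~\eqref{eqn:limit-curve} derived from the AGM limit, and the explicit analytic parametrization from Proposition~\ref{prop:wp-limits}. Both describe the same limit of the Weierstrass parametrizations $\C/\Lambda_n\to E_n$, so they must be compatible, and this compatibility will force the desired relation between $M$ and~$w_1$.

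First I would fix $z\in\C\setminus\Lambda_0$ and take $n\to\infty$ in the Weierstrass identity
\[
\wp'_{\Lambda_n}(z)^2 = 4\prod_{j=1}^3\bigl(\wp_{\Lambda_n}(z) - e_j^{(n)}\bigr).
\]
Using Proposition~\ref{prop:wp-limits} on the left and~\eqref{eqn:ei-limit} on the right produces an identity between meromorphic functions of~$z$ involving $u=\pi/w_1$ and~$M$, namely
\[
Y_\infty(z)^2 = 4\bigl(X_\infty(z) - \tfrac{2}{3}M^2\bigr)\bigl(X_\infty(z) + \tfrac{1}{3}M^2\bigr)^2,
\]
where $X_\infty(z)$ and $Y_\infty(z)$ are given explicitly in terms of $u$ and $\sin(uz),\cos(uz)$ by Proposition~\ref{prop:wp-limits}.

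Next I would substitute $t=\sin^2(uz)$ and clear the common denominator $t^3$, converting the above into a polynomial identity in the single variable~$t$, whose coefficients are polynomials in $u^2$ and $M^2$. The constant and linear coefficients match automatically, and comparing the coefficients of $t^2$ and of $t^3$ yields two algebraic conditions on $u^2$ and~$M^2$. Combining these, and using $M\ne 0$ (from Proposition~\ref{prop:cox}(2), since the sequence is good) together with $u\ne 0$, will force $u^2 = M^2$, i.e.\ $M = \pm\pi/w_1$, which is the stated formula. The main (minor) obstacle is that a single coefficient comparison admits a spurious solution such as $M^2=-u^2$; both comparisons are needed in concert to eliminate it and pin down $u^2 = M^2$.
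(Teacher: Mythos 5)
Your proposal is correct, and it does establish the result, but it takes a different and somewhat longer route than the paper. The paper evaluates at the single point $z = w_1/2$: since $w_1$ lies in every $\Lambda_n$ and the $2$-torsion points $T_n=(e_1^{(n)},0)$ form a coherent sequence with elliptic logarithm $w_1/2$, one has $\wp_{\Lambda_n}(w_1/2)=e_1^{(n)}$ for all $n$. Letting $n\to\infty$ and applying Proposition~\ref{prop:wp-limits} (with $\sin(\pi/2)=1$) on the left and \eqref{eqn:ei-limit} on the right gives $\tfrac{2}{3}(\pi/w_1)^2=\tfrac{2}{3}M^2$ immediately. Your argument instead passes to the limit in the entire Weierstrass differential equation $\wp'^2=4\prod(\wp-e_j^{(n)})$, obtains a functional identity in $t=\sin^2(uz)$, and compares coefficients: the $t^2$ coefficient gives $u^4=M^4$ and the $t^3$ coefficient rules out $u^2=-M^2$, leaving $u^2=M^2$. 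Both ingredients you flagged are indeed necessary, and the spurious-solution concern is real. What your approach buys is that you never need to identify which specific $2$-torsion point corresponds to $z=w_1/2$ in the limit; what it costs is the coefficient bookkeeping, where the paper's single-point specialization avoids all of it by choosing $z$ so that both sides are known independently. Either way the conclusion $M=\pm\pi/w_1$ follows.
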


\begin{proof}
For all~$n\ge0$ we have $\wp_{\Lambda_n}(w_1/2) = e_1^{(n)}$.  Letting
$n\to\infty$ and using the proposition we find that 
\[
  \frac{2}{3}M^2 = \lim_{n\to\infty}e_1^{(n)} =
  \frac{2}{3}\left(\frac{\pi}{w_1}\right)^2, 
\]
from which the result follows.
\end{proof}

The ambiguity of sign in this result will not matter in practice:
changing the sign of~$w_1$ does not change the lattice chain, and
neither does changing the signs of both~$a_0,b_0$ (and hence the sign
of~$M_S(a_0,b_0)$).

\bigskip

For fixed $(a_0,b_0)$, the value of $M_S(a_0,b_0)$ depends on the
set~$S$ of indices~$n$ for which~$(a_n,b_n)$ is bad.  Changing~$S$, we
obtain different AGM sequences, and different lattice chains, but
these all start with the same short chain $(\Lambda_n)_{n=0}^{2}$, and
the periods given by $\pi/M_S(a_0,b_0)$ are all in the same coset
modulo~$4\Lambda_0$.  We may now establish the result stated above as
Proposition~\ref{prop:cox}(3):

\begin{corollary}
$|M_S(a_0,b_0)|$ attains its maximum (among all AGM-sequences starting
  at $(a_0,b_0)$) if and only if the sequence is optimal.
\end{corollary}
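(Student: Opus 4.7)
The plan is to reduce the statement, via Corollary~\ref{cor:one-period}, to a question about the length of the limiting period of an associated lattice chain, and then apply the minimality characterization in Proposition~\ref{prop:min-coset-rep} to an appropriately shifted chain. First I dispose of bad sequences: by Proposition~\ref{prop:cox}(2), any such sequence satisfies $M_S(a_0,b_0)=0$, so cannot attain the (strictly positive) maximum of~$|M_S|$. Restricting to good sequences, Corollary~\ref{cor:one-period} gives $|M_S(a_0,b_0)|=\pi/|w_\infty|$, where~$w_\infty$ is the limiting period (up to sign) of the associated good lattice chain~$(\Lambda_n)$. Maximizing~$|M_S|$ over good sequences therefore amounts to minimizing~$|w_\infty|$ over all good lattice chains extending the short chain $\Lambda_0\supset\Lambda_1\supset\Lambda_2$ that is fixed by~$(a_0,b_0)$. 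By the bijection of Proposition~\ref{prop:good=good}, the admissible $w_\infty$ are precisely the primitive elements of~$\Lambda_0$ lying in the coset $\Lambda_2\setminus2\Lambda_1$ (equivalently, the generators of the cyclic group $\Lambda_2/4\Lambda_0$).

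The central step is to apply Proposition~\ref{prop:min-coset-rep} to the \emph{shifted} chain $(\Lambda_n)_{n=1}^\infty$, which is itself a chain of lattices in the sense of Section~\ref{sec:lattice-chain} with the same limiting sublattice~$\ideal{w_\infty}$. This shifted chain is optimal precisely when $\Lambda_{n+1}$ is the right choice of~$\Lambda_n$ for all $n\ge2$, which by Proposition~\ref{prop:good=good}(3) is equivalent to $(a_n,b_n)$ being good for all~$n\ge1$, i.e.\ the AGM sequence being optimal. The shifted form of Proposition~\ref{prop:min-coset-rep} then identifies this with~$w_\infty$ being a minimal coset representative of~$2\Lambda_1$ in~$\Lambda_1$; since~$w_\infty$ automatically lies in the coset~$\Lambda_2\setminus2\Lambda_1$, this is the condition that~$|w_\infty|$ is minimal over that coset. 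A short verification (using the identity $\Lambda_2\cap2\Lambda_0=2\Lambda_1$ together with the primitivity argument preceding Lemma~\ref{lem:ortho-basis}) shows that this coset minimum is primitive in~$\Lambda_0$, and hence is indeed realized by a good lattice chain extending the short chain, and thus by some AGM sequence. Stringing the equivalences together yields AGM optimal~$\iff$ $|w_\infty|$ minimal~$\iff$ $|M_S(a_0,b_0)|$ maximal.

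The main obstacle is the minor index-shift between the two definitions of ``optimal'': AGM optimality (Section~\ref{sec:agmseq}) requires $(a_n,b_n)$ good for $n\ge1$ but places no constraint on $(a_0,b_0)$, while Section~\ref{sec:lattice-chain} optimality of the unshifted chain $(\Lambda_n)_{n=0}^\infty$ would additionally require~$\Lambda_2$ itself to be the right choice of sublattice of~$\Lambda_1$. Working with the shifted chain $(\Lambda_n)_{n=1}^\infty$ aligns the two notions and is the crux of applying Proposition~\ref{prop:min-coset-rep} cleanly; verifying that the resulting coset minimum is primitive in~$\Lambda_0$ (rather than only in~$\Lambda_1$) is the small subtlety one must attend to.
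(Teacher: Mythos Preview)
Your argument is correct and follows the paper's overall strategy: translate maximality of $|M_S|$ to minimality of the limiting period via Corollary~\ref{cor:one-period}, then invoke Proposition~\ref{prop:min-coset-rep} and Proposition~\ref{prop:good=good} to link minimality of the period with optimality of the AGM sequence. The paper's own proof applies Proposition~\ref{prop:min-coset-rep} directly to the unshifted chain $(\Lambda_n)_{n\ge0}$ and then cites Proposition~\ref{prop:good=good}(4); this is clean when $(a_0,b_0)$ is good (so that $\Lambda_2$ is already the right choice and lattice-chain optimality coincides with AGM optimality), but the case where $(a_0,b_0)$ is bad is glossed over, since then no extension of the fixed short chain is optimal in the sense of Section~\ref{sec:lattice-chain}, and Proposition~\ref{prop:min-coset-rep} gives information about the wrong coset (mod $2\Lambda_0$ rather than mod $4\Lambda_0$). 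Your shift to the chain $(\Lambda_n)_{n\ge1}$ resolves exactly this index mismatch, making the argument uniformly valid regardless of the goodness of $(a_0,b_0)$; the small price is the additional primitivity check (that a minimal element of $\Lambda_2\setminus2\Lambda_1$ is primitive in $\Lambda_0$, via $\Lambda_2\cap2\Lambda_0=2\Lambda_1$), which you carry out correctly.
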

\begin{proof}
By Corollary~\ref{cor:one-period}, $M_S(a_0,b_0)$ is maximal (in
absolute value) if and only if the limiting
period~$w_1=\pi/M_S(a_0,b_0)$ is minimal.  By
Proposition~\ref{prop:min-coset-rep}, this is if and only if the
lattice chain is optimal.  By Proposition~\ref{prop:good=good}(4),
this in turn is if and only if the AGM sequence is optimal.
\end{proof}

\begin{corollary}
\begin{enumerate}
\item The optimal value $M=M(a_0,b_0)$ gives a period $w_1=\pi/M$
  which is minimal in its coset modulo~$4\Lambda_0$.
\item $|M(a_0,b_0)| \ge |M(a_0,-b_0)| \iff |a_0-b_0|\le|a_0+b_0|$.
\item If $(a_0,b_0)$ is good, then $\pi/M(a_0,b_0)$ is minimal in its
  coset modulo~$2\Lambda_0$.
\end{enumerate}
\end{corollary}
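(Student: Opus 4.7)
I address the three parts in reverse order. For~(3), if $(a_0,b_0)$ is good, then Proposition~\ref{prop:good=good}(3) applied at $n=0$ gives that $\Lambda_2$ is the right choice of sublattice of~$\Lambda_1$; combined with optimality of the AGM (which via the same proposition for $n\ge1$ supplies right choices for all $m\ge3$), the lattice chain satisfies the right-choice condition for every $n\ge1$, so it is optimal in the sense of Section~\ref{sec:lattice-chain}. Proposition~\ref{prop:min-coset-rep} then identifies $w_1=\pi/M(a_0,b_0)$ as a minimal coset representative of $2\Lambda_0$ in $\Lambda_0$, as desired.

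For~(1), Corollary~\ref{cor:one-period} shows that every good chain sharing the short chain $\Lambda_0\supset\Lambda_1\supset\Lambda_2$ determined by $(a_0,b_0)$ has limiting period $\pm\pi/M_S(a_0,b_0)$ for some $S$, while the preceding corollary (the one proving Proposition~\ref{prop:cox}(3)) gives $|M(a_0,b_0)|\ge|M_S(a_0,b_0)|$, so $|w_1|$ is minimal among these. Conversely, any primitive $w\in\pm w_1+4\Lambda_0$ generates a good chain $\Lambda'_n=\ideal{w}+2^n\Lambda_0$ with the same short chain, since by Lemma~\ref{lem:primitive}(2) and $w\equiv\pm w_1\pmod{4\Lambda_0}$ we have $\ideal{w}+4\Lambda_0=\Lambda_2$. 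Hence every primitive element of $w_1+4\Lambda_0$ has absolute value at least~$|w_1|$ (noting that by negation, the same bound holds in~$-w_1+4\Lambda_0$).

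For non-primitive $w\in w_1+4\Lambda_0$, write $w=mw'$ with $w'$ primitive; since $w_1$ is primitive, $w\equiv w_1\pmod{4\Lambda_0}$ forces $w\notin 2\Lambda_0$, so $m$ must be odd. If $m\ge 3$, then from $mw'\equiv w_1\pmod{4\Lambda_0}$ and the invertibility of $m$ modulo~$4$ we deduce $w'\equiv\pm w_1\pmod{4\Lambda_0}$, so by the primitive case $|w'|\ge|w_1|$ and hence $|w|=m|w'|\ge 3|w_1|>|w_1|$. This completes~(1).

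For~(2), apply~(1) to both $(a_0,b_0)$ and $(a_0,-b_0)$: the periods $w_1=\pi/M(a_0,b_0)$ and $w_1'=\pi/M(a_0,-b_0)$ are minimal in the cosets $w_1+4\Lambda_0\subset\Lambda_2$ and $w_1'+4\Lambda_0\subset\Lambda_2'$ respectively, where $\Lambda_2,\Lambda_2'$ are the two cyclic index-$4$ sublattices of~$\Lambda_0$ contained in~$\Lambda_1$. The decompositions $\Lambda_2\setminus 2\Lambda_1=(w_1+4\Lambda_0)\cup(-w_1+4\Lambda_0)$ and analogously for~$\Lambda_2'$ give $\min(\Lambda_2\setminus 2\Lambda_1)=|w_1|$ and $\min(\Lambda_2'\setminus 2\Lambda_1)=|w_1'|$, while $\Lambda_1\setminus 2\Lambda_0$ is the disjoint union of these two sets. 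The right-choice condition for $\Lambda_2\subset\Lambda_1$ therefore reduces to $|w_1|\le|w_1'|$, i.e.,~$|M(a_0,b_0)|\ge|M(a_0,-b_0)|$; combining with Proposition~\ref{prop:good=good}(3) at $n=0$ and the definition of ``good'' ($|a_0-b_0|\le|a_0+b_0|$) yields the stated equivalence. The main technical obstacle is the non-primitive subcase in~(1), resolved cleanly by the mod-$4$ arithmetic above.
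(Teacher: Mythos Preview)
Your proof is correct and follows essentially the same route as the paper: both rely on Proposition~\ref{prop:good=good} to translate AGM-optimality into lattice-chain optimality and then invoke Proposition~\ref{prop:min-coset-rep} for minimality modulo~$2\Lambda_0$, with the mod-$4\Lambda_0$ statement coming from the preceding corollary and the fact that changing the sign of~$b_0$ swaps~$\Lambda_2$ for~$\Lambda_2'$ without affecting~$\Lambda_1$. Your version is more thorough than the paper's very terse argument---in particular you explicitly dispose of the non-primitive elements of $w_1+4\Lambda_0$ in~(1) and spell out the decomposition $\Lambda_1\setminus 2\Lambda_0=(\Lambda_2\setminus 2\Lambda_1)\sqcup(\Lambda_2'\setminus 2\Lambda_1)$ in~(2), both of which the paper leaves implicit.
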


\begin{proof}
Changing the sign of~$b_0$ (only) does not change $\Lambda_1$ (or
$E_1$), but does change~$\Lambda_2$.  The effect on~$w_1$, therefore,
is to change its coset modulo~$4\Lambda_0$ while not affecting its
coset modulo~$2\Lambda_0$.  By Proposition~\ref{prop:good=right},
$\Lambda_2$ is the right choice if and only if~$(a_0,b_0)$ is good.
Hence, to obtain a period minimal in its coset modulo~$2\Lambda_0$,
and not just modulo~$4\Lambda_0$, we choose the sign of~$b_0$ so that
the pair~$(a_0,b_0)$ is good, and then take an optimal AGM sequence.
\end{proof}

\begin{theorem}[Periods of Elliptic Curves over $\C$, first
    version]\label{thm:w1} 
Let $E$ be an elliptic curve over $\C$ given by the Weierstrass
equation
\[
    Y^2 = 4(X-e_1)(X-e_2)(X-e_3),
\] 
with period lattice~$\Lambda$. Set $a_0 = \sqrt{e_1-e_3}$ and~$b_0 =
\sqrt{e_1-e_2}$, where the signs are chosen so that $(a_0,b_0)$ is
good ({\it i.e.}, $|a_0-b_0| \le |a_0+b_0|$), and let
\[
    w_1 = \frac{\pi}{M(a_0,b_0)},
\] 
using the optimal value of the AGM.  Then $w_1$ is a primitive period
of~$E$, and is a minimal period in its coset modulo~$2\Lambda$.

Define $w_2$, $w_3$ similarly by permuting the $e_j$;  then any two of
$w_1,w_2,w_3$ form a $\Z$-basis for~$\Lambda$.
\end{theorem}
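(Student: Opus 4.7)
The theorem is essentially a synthesis of the results already in place, and the plan is to assemble them in order.

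I would begin by verifying that the hypotheses uniquely determine the pair $(a_0,b_0)$ up to simultaneous negation. For either choice of signs of $a_0=\pm\sqrt{e_1-e_3}$ and $b_0=\pm\sqrt{e_1-e_2}$, exactly one of $(a_0,b_0)$, $(a_0,-b_0)$ is good in the sense of \eqref{eqn:good-defn} (or both, in the boundary case), and the resulting good pair is well-defined up to negating both entries, which does not affect the associated AGM sequence beyond an overall sign. The optimal AGM sequence starting from this good pair therefore yields a well-defined value $M(a_0,b_0)$ up to sign.

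Next I would identify $w_1=\pi/M(a_0,b_0)$ with the limiting period of a specific lattice chain. Via the constructions of Section~\ref{sec:isogeny}, the chosen $(a_0,b_0)$ determines a unique short chain $\Lambda\supset\Lambda_1\supset\Lambda_2$ compatible with the distinguished two-torsion $T_0=(e_1,0)$, and the optimal AGM sequence extends it to a (super-)optimal lattice chain $(\Lambda_n)$ by Proposition~\ref{prop:good=good}(4). By Corollary~\ref{cor:one-period} the limiting period of this good chain equals $\pm\pi/M(a_0,b_0)=\pm w_1$. The chain is optimal, so Proposition~\ref{prop:min-coset-rep} tells us $w_1$ is a minimal coset representative of $2\Lambda$ in $\Lambda$; hence $w_1$ is primitive, and since $(a_0,b_0)$ is good the last of the three corollaries preceding the theorem upgrades this to the stronger assertion that $w_1$ is minimal in its coset modulo $2\Lambda$ (not merely modulo $4\Lambda$).

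For the basis assertion, I would explain how permuting $e_1,e_2,e_3$ traverses all three non-trivial cosets of $2\Lambda$ in $\Lambda$. The distinguished point $T_0=(e_1,0)$ corresponds under $E(\C)\cong\C/\Lambda$ to a non-trivial element of $\tfrac{1}{2}\Lambda/\Lambda\cong\Lambda/2\Lambda$, and this element determines $\Lambda_1$, and hence the coset of $w_1$ modulo $2\Lambda$, via the bijections of Sections~\ref{sec:lattice-chain} and~\ref{sec:isogeny}. The three choices of distinguished root $e_1$ give the three distinct nonzero two-torsion points of $E$, and therefore the three distinct non-trivial cosets of $2\Lambda$; so $w_1,w_2,w_3$ are minimal coset representatives, one in each of the three non-trivial cosets.

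The basis claim then follows from Lemma~\ref{lem:z-basis} in the non-rectangular case. For rectangular $\Lambda$ one invokes Lemma~\ref{lem:ortho-basis}: after relabelling so that $|w_1|\le|w_2|\le|w_3|$, the pair $\{w_1,w_2\}$ is an orthogonal $\Z$-basis and $w_3=\pm(w_1\pm w_2)$, from which a direct check shows that any two of the $w_j$ generate $\Lambda$. I expect the only real obstacle to be the bookkeeping in the third paragraph, confirming that the three orderings of the roots really do select the three distinct non-trivial cosets modulo $2\Lambda$, but this is routine once the bijections of Sections~\ref{sec:lattice-chain} and~\ref{sec:isogeny} are invoked; no fundamentally new argument is needed.
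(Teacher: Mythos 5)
Your proposal is correct and follows the paper's proof in its essentials: the first assertion (primitivity and minimality modulo~$2\Lambda$) is assembled from the chain of results immediately preceding the theorem, and the basis assertion is reduced via Lemma~\ref{lem:z-basis} to the fact that the three choices of distinguished root yield minimal elements in the three nontrivial cosets of~$2\Lambda$. One point where you go beyond the paper's terse proof is worth flagging: the published argument says simply that ``Lemma~\ref{lem:z-basis} applies,'' but that lemma as stated requires $\Lambda$ to be non-rectangular; you correctly observe that in the rectangular case one should instead appeal to Lemma~\ref{lem:ortho-basis}, noting that after ordering by absolute value $\{w_1,w_2\}$ is an orthogonal basis and $w_3=\pm(w_1\pm w_2)$, from which the $\Z$-basis claim still follows. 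A small stylistic remark: Proposition~\ref{prop:min-coset-rep} already concludes that the limiting period of an optimal chain is minimal in its coset modulo~$2\Lambda$, so your subsequent ``upgrade'' via the corollary (which records minimality modulo~$4\Lambda$ unconditionally versus modulo~$2\Lambda$ when $(a_0,b_0)$ is good) is not a logically distinct strengthening but a second route to the same conclusion; the two are reconciled by noticing that the hypothesis ``$(a_0,b_0)$ good'' is precisely what forces the lattice chain to be optimal in the sense of the proposition, rather than merely right from some later index. This doesn't affect the correctness of your argument, but tightening that sentence would make the dependence on the good-ness of $(a_0,b_0)$ clearer.
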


\begin{proof}
Everything has been established except the last part.  Letting
$e_2,e_3$ in turn play the role of~$e_1$ gives minimal periods in each
of the cosets modulo~$2\Lambda$, so Lemma~\ref{lem:z-basis} applies.
\end{proof}

\begin{algorithm}[Computation of a period lattice basis]\label{algo:zbasis} 
\hfill\break\vspace{-10pt}
\begin{itemize}
\item[\textbf{Input:}] An elliptic curve $E$ defined over~$\C$, and roots
$e_j\in\C$ for $j=1,2,3$.  
\item[\textbf{Output:}] Three primitive periods of~$E$, which are minimal
coset representatives, any two of which form a $\Z$-basis for the
period lattice of $E$.
\end{itemize}
\begin{enumerate}
\item Label one of the roots as $e_1$, and the other two
  arbitrarily as~$e_2$, $e_3$;
\item Set $a_0 = \sqrt{e_1 - e_3}$ with arbitrary sign,
  and then $b_0 = \pm\sqrt{e_1 - e_2}$ with the sign
  chosen such that $ |a_0-b_0|\le|a_0+b_0|$.
\item Output $w = \pi/M(a_0,b_0)$, using the optimal value of the AGM.
\item Repeat with each root~$e_j$ in turn playing the role
  of~$e_1$.
\end{enumerate}
\end{algorithm}

Instead of computing $w_2,w_3$ by permuting the~$e_j$ as in
Theorem~\ref{thm:w1}, we may alternatively obtain all $w_j$ by using a
single ordering of the roots and three different AGM computations.

Starting with an arbitrary ordering of the roots,say $(e_1, e_2,
e_3)$, define $a$ and~$b$ as before, up to sign, by $a^2=e_1-e_3$ and
$b^2=e_1-e_2$; and also define $c$ (up to sign) by $c^2=e_2-e_3$, so
that $a^2 = b^2 + c^2$. We would like to determine the signs of
$a,b,c$ so that all three of the following conditions hold:
\begin{equation}\label{eqn:abc-cond}
|a-b|\le |a+b|,\quad
|c-ib| \le |c+ib|,\quad
|a-c| \le |a+c|.
\end{equation}
We claim that this is always possible. To see this, first choose the
sign of $a$ arbitrarily. Then choose the signs of $b$ and $c$ so that
the first and the third conditions in \eqref{eqn:abc-cond} hold.
Finally, if the second condition fails, one can easily check that if
$e_1$ and $e_3$ are interchanged and $a,b,c$ replaced (in order)
by~$ia$, $ic$, $ib$, then all three inequalities will hold.

We can now state an alternative theorem for obtaining a $\Z$-basis for
the period lattice~$\Lambda$ of~$E$.

\begin{theorem}[Periods of Elliptic Curves over $\C$, second
    version]\label{thm:w1w2w3} 
Let $E$ be an elliptic curve over $\C$ given by the Weierstrass
equation
\[
    Y^2 = 4(X-e_1)(X-e_2)(X-e_3),
\] 
with period lattice~$\Lambda$. Order the roots~$(e_1, e_2, e_3)$
of~$E$, so that the signs of $a=\sqrt{e_1-e_3}$, $b=\sqrt{e_1-e_2}$,
$c=\sqrt{e_2-e_3}$ may be chosen to satisfy all the conditions of
\eqref{eqn:abc-cond}. Define
$$
w_1 = \frac\pi{M(a,b)},\quad
w_2 = \frac\pi{M(c, ib)},\quad
w_3 = \frac{i\pi}{M(a,c)}.
$$ Then each~$w_j$ is a primitive period, minimal in its coset
modulo~$2\Lambda$, and any two of the $w_j$ form a $\Z$-basis for
$\Lambda$.
\end{theorem}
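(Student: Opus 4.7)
The plan is to reduce Theorem~\ref{thm:w1w2w3} to Theorem~\ref{thm:w1} by recognizing $w_1$, $w_2$, $w_3$ as (up to sign) the three periods that Theorem~\ref{thm:w1} produces when each of $e_1, e_2, e_3$ in turn plays the role of the distinguished root. For $w_1$ this is immediate: the pair $(a,b) = (\sqrt{e_1-e_3}, \sqrt{e_1-e_2})$ is good by the first inequality of~\eqref{eqn:abc-cond}, and Theorem~\ref{thm:w1} gives $w_1 = \pi/M(a,b)$ as a primitive period minimal in its coset modulo $2\Lambda$.

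For $w_2$, I would place $e_2$ in the distinguished role. The relevant square roots are $\sqrt{e_2-e_3} = \pm c$ and $\sqrt{e_2-e_1} = \pm\sqrt{-(e_1-e_2)} = \pm ib$, and the second inequality in~\eqref{eqn:abc-cond} says precisely that $(c, ib)$ is good, so Theorem~\ref{thm:w1} yields $w_2 = \pi/M(c, ib)$. For $w_3$, placing $e_3$ in the distinguished role gives $\sqrt{e_3-e_1} = \pm ia$ and $\sqrt{e_3-e_2} = \pm ic$; the third inequality in~\eqref{eqn:abc-cond} is $\Re(c/a) \ge 0$, equivalently $\Re(ic/ia) \ge 0$, so $(ia, ic)$ is good. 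Theorem~\ref{thm:w1} provides a period $\pi/M(ia, ic)$, and a short homogeneity argument --- observing that scaling every pair in an optimal AGM sequence by $i$ preserves both the recurrences (up to the chosen sign of the square root) and the goodness condition $\Re(b_n/a_n)\ge 0$ --- gives $M(ia, ic) = i\,M(a, c)$, so this period equals $-i\pi/M(a, c) = \pm w_3$, and the sign is immaterial for the lattice.

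Each $w_j$ is therefore a primitive period minimal in its coset modulo $2\Lambda$, and the three cosets are distinct since they correspond to the three non-trivial $2$-torsion points $(e_j, 0)$. In the non-rectangular case Lemma~\ref{lem:z-basis} directly implies that any two of the $w_j$ form a $\Z$-basis for~$\Lambda$. In the rectangular case, Lemma~\ref{lem:ortho-basis} shows that the minimal representative in one coset has the form $\pm(w_1\pm w_2)$ with $\{w_1,w_2\}$ orthogonal, and a direct check confirms that any two of the $w_j$ still span~$\Lambda$ over~$\Z$.

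I expect the real work to lie not in the identifications above but in the preliminary claim (already sketched informally before the theorem) that the signs of $a, b, c$ can be chosen so that all three inequalities in~\eqref{eqn:abc-cond} hold simultaneously: after fixing $a$ and choosing the signs of $b$ and $c$ to satisfy conditions (1) and (3), one must verify that failure of condition (2) can always be repaired by swapping $e_1\leftrightarrow e_3$ and replacing $(a, b, c)$ by $(ia, ic, ib)$. This sign juggling, and the routine verification of the homogeneity $M(\lambda a, \lambda b) = \lambda M(a, b)$ for the optimal AGM, are the only technical points; everything else follows by direct appeal to Theorem~\ref{thm:w1}.
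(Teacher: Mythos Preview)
Your approach is essentially identical to the paper's: it too reduces to Theorem~\ref{thm:w1} by letting each root play the distinguished role, using the orderings $(e_2,e_1,e_3)$ and $(e_3,e_2,e_1)$ to obtain the good pairs $(c,ib)$ and $(ia,ic)$, and then invoking the homogeneity $M(ia,ic)=iM(a,c)$. Your explicit handling of the rectangular case via Lemma~\ref{lem:ortho-basis} and your spelling out of the AGM homogeneity are minor elaborations that the paper leaves implicit.
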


\begin{proof}
Let $(e_1, e_2, e_3)$ be an order of the roots of $E_0$.
Interchanging~$e_1$ and~$e_3$ if necessary, define $a=\sqrt{e_1-e_3}$,
$b=\sqrt{e_1-e_2}$, $c=\sqrt{e_2-e_3}$, with the signs chosen so that
all three inequalities in \eqref{eqn:abc-cond} hold.

Now $w_1=\pi/M(a,b)$ is primitive and minimal in its coset as before,
since $(a,b)$ is good.  Using $(e_1',e_2',e_3')=(e_2,e_1,e_3)$, we
find that $(a',b')=(c,ib)$ is good, and set $w_2 = \pi/M(a',b') =
\pi/M(c,ib)$; and using $(e_1'',e_2'',e_3'')=(e_3,e_2,e_1)$, we see
that $(a'',b'')=(ia,ic)$ is good, and set $w_3= \pi/M(a'',b'')=\pi
i/M(a,c)$.
\end{proof}

\bigskip
We complete this section by considering two special cases, which arise
when considering elliptic curves defined over the real numbers,
separating the cases of positive discriminant (rectangular period
lattice) and negative discriminant.

\subsection{Special Case I: Rectangular Lattices}\label{sec:special-i}
Recall that if $|a_0-b_0|=|a_0+b_0|$, then both $(a_0,\pm b_0)$ are good and
$\Re(b_0/a_0)=0$. In this case,
$$
   \frac{e_2-e_1}{e_3-e_1} = (b_0/a_0)^2
$$ is real and negative.  Geometrically, this means that the~$e_j$ are
   collinear on the complex plane with $e_1$ in the middle.

To see what the associated period lattice looks like, let $w =
\pi/M(a_0,b_0)$ and $w'=\pi/M(a_0,-b_0)$. Then $w,w'$ are both minimal
elements in the same coset modulo $2\Lambda_0$.  By Lemma
\ref{lem:ortho-basis}, the periods~$w_1,w_2=(w\pm w')/2$ form an
orthogonal $\Z$-basis for $\Lambda_0$, and the period lattice is
rectangular.  Alternatively, we could obtain a $\Z$-basis for
$\Lambda_0$ by computing two periods (as in Theorem~\ref{thm:w1})
using the two other roots of $E$ which are not ``in the middle'' in
the role of~$e_1$.

Finally, we note that whenever the $e_j$ are collinear, we can
``rotate'' them by a multiplying by a suitable constant in $\C^*$ so
that the scaled roots $e'_j$ are all real. Then one could use an
algorithm for computing period lattices of elliptic curves over $\R$
(e.g.  \cite[Algorithm 7.4.7]{coh}) to compute the period lattice of
the elliptic curve $(Y')^2 = 4(X'-{e'_1})(X'-e'_2)(X'-e'_3)$.  The
period lattice of our original elliptic curve is then obtained after
suitable scaling.  This may be more efficient in practice, since only
real arithmetic would be needed in the AGM iteration.

If the $e_j$ are all real (as is the case for an elliptic curve
defined over~$\R$ with positive discriminant), we may order them so
that $e_1>e_2>e_3$ and obtain a rectangular basis for the period
lattice by setting
\begin{equation}\label{eqn:pos-real-periods}
    w_1 = \pi/M(\sqrt{e_1-e_2},\sqrt{e_1-e_3}), \quad
    w_2 = \pi i/M(\sqrt{e_2-e_3},\sqrt{e_1-e_3})
\end{equation}
with all square roots positive; then $w_1$ and $w_2/i$ are both real
and positive.  These familiar formulas may be found in \cite[Algorithm
  7.4.7]{coh} or \cite[(3.7.1)]{JCbook2}.

\subsection{Special Case II}\label{sec:special-ii}
If the roots of $E$ are such that
$$
\left|\frac{e_1-e_2}{e_1-e_3} \right|=1\quad
\text{with $e_1-e_2 \ne \pm(e_1-e_3)$},
$$ then geometrically the $e_j$ lie on an isosceles triangle having
$e_1$ as the vertex where the sides of equal length intersect. As
before, one can rotate this triangle by a suitable constant in $\C^*$
so that $e_1\in\R$, and $e_2,e_3$ are complex conjugates. This yields
a new elliptic curve $E'$, defined over $\R$, whose Weierstrass
equation has only one real root.

Again, one could use an algorithm for computing period lattices of
elliptic curves over $\R$ (e.g. \cite[Algorithm 7.4.7]{coh}) to
compute the period lattice of $E'$.  This is of the form
$\Lambda'=\ideal{w'_1,w'_2}$, for some $w'_1,w'_2$ satisfying
$$
w'_1\in\R,\quad \Re(w'_2) = \frac{w'_1}2.
$$ The period lattice $\Lambda=\ideal{w_1,w_2}$ of $E$, with
$\Re(w_2/w_1)=1/2$, can then be obtained by a suitable scaling of
$w'_1,w'_2$.  This will be illustrated in Example \ref{eq:special-ii}.

For real curves with negative discriminant, we present here a
simplification of the purely real algorithm given in~\cite{coh}.  Let
$e_1$ be real and $e_2,e_3$ complex conjugates, ordered so that $\Im
e_2>0$.  Set $a_0=\sqrt{e_1-e_3}=x+yi$; since $e_1-e_3$ lies in the
upper half-plane, we may choose the sign of~$a_0$ so that~$x,y>0$.
Set $r=\sqrt{x^2+y^2}>0$ and $b_0=\sqrt{e_1-e_2}=x-yi$.  Now we may
obtain a real period~$w_+$ from
\[
   w_+ = \pi/M(a_0,b_0) = \pi/M(x+yi,x-yi) = \pi/M(x,r),
\]
and an imaginary period~$w_-$ from
\[
   w_- = \pi/M(-a_0,b_0) = \pi i/M(y-xi,y+xi) = \pi i/M(y,r).
\]
Note that both AGMs appearing here, $M(x,r)$ and~$M(y,r)$, are
classical (real and positive). These periods span a sublattice of
index~$2$ in the period lattice, for which a $\Z$-basis may be taken
to be~$w_1 = w_+$ and~$w_2 = (w_++w_-)/2$, where $\Re(w_2/w_1)=1/2$.

\section{The complete set of AGM values}
In 1800, Gauss described the complete set of values of $M_S(a,b)$ as
$S$ ranges through all finite sets.  The proof given by Cox
in~\cite[Theorem 2.2]{cox} uses theta and modular functions related to
the modular functions which appeared earlier in this paper.  Other
proofs are also available in the literature, for example by Geppert
\cite{gep}.

We will give here a slightly more general form of the result than that
stated in \cite{cox}, and give an alternative proof which brings out
clearly the relation with period lattices of elliptic curves.  

In the following statement, we set $P_S(a,b)=\pi/M_S(a,b)$ (for
any finite $S\subseteq\Z_{>0}$) and $P(a,b)=\pi/M(a,b)$.
\begin{theorem}
For $a,b\in\C^*$ with $a\ne\pm b$, let $E_{\{a,b\}}$ be the elliptic curve
over $\C$ given by the Weierstrass equation
\[
   E_{\{a,b\}}:\quad Y^2 = 4X(X+a^2)(X+b^2),
\] 
and let $\Lambda$ be its period lattice. Let $c=\sqrt{a^2-b^2}$, with
the sign chosen so that the pair $(a,c)$ is good, and set
\[
   w_1=P(a,b),\quad w_3=iP(a,c).
\] 
Then $\Lambda = \Z w_1+\Z w_3$, and the set of values of $P_S(a,b)$ is
precisely the set of primitive elements of the coset $w_1+4\Lambda$.
More precisely, we have the following:
\begin{align*}
   \{P_S(a,b)\} &= \{ w \in w_1 + 4\Lambda, \quad\text{$w$ primitive}\}; \\
   \{P_S(a,-b)\} &= \{ w \in w_1 + 2w_3 + 4\Lambda, \quad\text{$w$ primitive}\}; \\
   \{P_S(-a,-b)\} &= \{ w \in -w_1 + 4\Lambda, \quad\text{$w$ primitive}\}; \\
   \{P_S(-a,b)\} &= \{ w \in -w_1  + 2w_3 + 4\Lambda, \quad\text{$w$ primitive}\}.
\end{align*}
Thus, the complete set of all values of $P_S(\pm a, \pm b)$ is the set
of primitive elements of the coset $w_1+2\Lambda$.
\end{theorem}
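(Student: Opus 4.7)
The argument combines Sections~\ref{sec:agmseq}--\ref{sec:periods} with a careful analysis of how the four sign choices $(\pm a,\pm b)$ select among the four cosets of $4\Lambda$ contained in $w_1+2\Lambda$. I would first verify the basis statement. The roots of $E_{\{a,b\}}$ are $e_1=0$, $e_2=-b^2$, $e_3=-a^2$, so the parameters $a,b,c$ coincide with $\sqrt{e_1-e_3}$, $\sqrt{e_1-e_2}$, $\sqrt{e_2-e_3}$ in Theorem~\ref{thm:w1w2w3} (reordering roots if needed to secure \eqref{eqn:abc-cond}). That theorem then produces three primitive periods, minimal in three distinct nonzero cosets modulo $2\Lambda$, so by Lemma~\ref{lem:z-basis} the two periods $w_1$ and $w_3$ form a $\Z$-basis for $\Lambda$.

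Next I would prove the first set identity $\{P_S(a,b)\}=\{w\in w_1+4\Lambda:\ w\text{ primitive}\}$. The unordered pair $\{a,b\}$ determines a short lattice chain $\Lambda_0\supset\Lambda_1\supset\Lambda_2$ via Sections~\ref{sec:lattice-chain}--\ref{sec:isogeny}; this short chain is independent of later AGM sign choices, as these only permute $e_1^{(2)}$ and $e_2^{(2)}$ (see Section~\ref{sec:isogeny}). Under the bijection of Section~\ref{sec:isogeny}, AGM sequences starting at $(a,b)$ correspond to lattice chains extending this short chain. For a good sequence (finite $S$), Proposition~\ref{prop:good=good}(4) makes the chain good, Lemma~\ref{lem:primitive} makes the limiting period $w_\infty$ primitive with $\Lambda_2=\ideal{w_\infty}+4\Lambda$, and Corollary~\ref{cor:one-period} identifies $w_\infty=\pm P_S(a,b)$. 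Since $\Lambda_2/4\Lambda\cong\Z/4\Z$ is cyclically generated by $w_1\bmod 4\Lambda$, its generators (= primitive elements of $\Lambda_2$) form exactly the two cosets $\pm w_1+4\Lambda$. To pin down the sign, I would combine $P_\emptyset(a,b)=w_1$ with the shift identity $P_S(a_0,b_0)=P_{S'}(a_1,b_1)$ (where $S'=\{n-1:n\in S,\ n\ge 2\}$) and induct on $|S|$. Surjectivity is then routine: for any primitive $w\in w_1+4\Lambda$, the chain $\Lambda_n=\ideal{w}+2^n\Lambda$ is good, extends our short chain, and so by the bijection realises $w=P_S(a,b)$.

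The three remaining sets reduce quickly to the first. Negating $(a,b)$ negates $(a_n,b_n)$ consistently and thus negates each $M_S$, giving $\{P_S(-a,-b)\}=-\{P_S(a,b)\}$, i.e.\ the primitives of $-w_1+4\Lambda$. The pair $\{a,-b\}$ corresponds, via the involution in the remark of Section~\ref{sec:lattice-chain}, to the \emph{other} cyclic-$4$ sublattice $\Lambda_2'$ of $\Lambda_1$. Enumerating the three index-$2$ sublattices of $\Lambda_1=\ideal{w_1,2w_3}$ and discarding $2\Lambda$ and $\Lambda_2$ yields $\Lambda_2'=\ideal{w_1+2w_3}+4\Lambda$; rerunning the argument of the previous paragraph for this short chain gives $\{P_S(a,-b)\}$ as the primitives of $w_1+2w_3+4\Lambda$, and analogously $\{P_S(-a,b)\}$ as the primitives of $-w_1+2w_3+4\Lambda$. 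Finally, since $\{w_1,w_3\}$ is a $\Z$-basis of $\Lambda$, the coset $w_1+2\Lambda$ decomposes modulo $4\Lambda$ into precisely these four cosets, so their union is the set of primitive elements of $w_1+2\Lambda$.

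The main obstacle is the sign/coset tracking. In the second paragraph, showing that $P_S(a,b)$ lies in $w_1+4\Lambda$ rather than in the opposite coset $-w_1+4\Lambda$ requires care; the induction via the shift identity should work but must be set up precisely, since the sign of each successive $b_n$ enters the limit. The identification of $\Lambda_2'$ with $\ideal{w_1+2w_3}+4\Lambda$ is itself a short calculation in the basis $\{w_1,w_3\}$, but verifying that the pair $\{a,-b\}$ really corresponds to \emph{this} sublattice (rather than the same one $\Lambda_2$) needs the geometric interpretation of the involution in Section~\ref{sec:lattice-chain} to be matched cleanly with the algebraic sign-flip on $(a_0,b_0)$.
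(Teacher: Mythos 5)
Your approach is essentially the same as the paper's (use Theorem~\ref{thm:w1w2w3} and Lemma~\ref{lem:z-basis} for the basis, then coset and symmetry arguments for the four set identities), but there is a genuine gap in your treatment of the basis statement.

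You write that the parameters $a,b,c$ coincide with those in Theorem~\ref{thm:w1w2w3} ``reordering roots if needed to secure \eqref{eqn:abc-cond}'', and from this you conclude that $w_1=P(a,b)$ and $w_3=iP(a,c)$ form a $\Z$-basis. But if the given pair $(a,b)$ is \emph{bad}, the sign adjustment forced by \eqref{eqn:abc-cond} replaces $b$ by $-b$, and the period output by Theorem~\ref{thm:w1w2w3} is then $P(a,-b)$, not $w_1=P(a,b)$. So your paragraph only establishes that $\{P(a,-b),\,w_3\}$ is a basis, leaving the case of bad $(a,b)$ unaddressed. The paper handles this by observing that when $(a,b)$ is bad one has $w_1' := P(a,-b) = w_1 \pm 2w_3$, so that $\Z w_1'+\Z w_3 = \Z w_1 + \Z w_3$ nonetheless. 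That identity is itself something to verify: one needs to know that the minimal period of the coset $w_1'+2w_3+4\Lambda$ (in which $P(a,b)$ lies by the corollaries following Corollary~\ref{cor:one-period}) is actually of the form $\pm w_1' \pm 2w_3$; this follows by examining minimality in a reduced basis $\{w_1',w_3\}$, but it is not automatic from primitivity alone and you need to spell it out.

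On the coset identity for $\{P_S(a,b)\}$, you are right that the delicate step is pinning down the sign (i.e.\ showing $P_S(a,b)\in w_1+4\Lambda$ rather than $-w_1+4\Lambda$). The paper asserts this in Section~\ref{sec:periods} in a single sentence (``the periods given by $\pi/M_S(a_0,b_0)$ are all in the same coset modulo $4\Lambda_0$'') and relies on it in the proof of the theorem, so your concern exposes a point the paper leaves implicit as well. Your proposed resolution via the shift identity $P_S(a_0,b_0)=P_{S'}(a_1,b_1)$ plus induction is reasonable in outline, but as stated it is incomplete: you must show that the inductive step is compatible with passing from the lattice $\Lambda_1$ back to $\Lambda_0$, i.e.\ that $w_1(a_1,b_1)\equiv w_1(a_0,b_0)\pmod{4\Lambda_0}$, which is precisely the sign-tracking you flag. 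The surjectivity half and the reductions to the three remaining cosets (using $P_S(-a,-b)=-P_S(a,b)$, the involution on short chains, and the decomposition of $w_1+2\Lambda$ into four cosets mod $4\Lambda$) are correct and match the paper's argument.
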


\begin{proof}
Since $\Lambda$ is invariant under translations of the $X$-coordinate,
we may apply Theorem \ref{thm:w1w2w3} to see that $\Lambda=\Z w_1'+\Z
w_3$ where $w_3$ (as given) is a minimal coset representative, and either
\begin{itemize}
\item {$(a,b)$ is good} and $w_1'=w_1$; or
\item {$(a,b)$ is bad} and  $w_1'=w_1\pm 2w_3$.
\end{itemize}
In either case, $\Lambda=\Z w_1 + \Z w_3$.

Now the values of $P_S(a,b)$ are precisely the primitive periods in
the same coset as~$w_1=P(a,b)$ modulo~$4\Lambda$.  Secondly,
$P_S(-a,-b)=-P_S(a,b)=-w_1$, so the values of $P_S(-a,-b)$ are the
primitive periods in the coset $-w_1\pmod{4\Lambda}$, as
required. Next, $P(a,-b)$ is the minimal period in the coset
$w_1+2w_3+4\Lambda$, since this is the other coset modulo~$4\Lambda$
contained in $w_1+2\Lambda$, so the values of~$\pm P_S(a,-b)$ are also
as stated.
\end{proof}

\begin{corollary}
Let $a,b,c\in\C^*$ satisfy $a^2=b^2+c^2$.  Define $w=\pi/M(a,b)$ and
$w'=\pi i/M(a,c)$, where $(a,c)$ is a good pair.  Then 
\begin{enumerate}
\item $\Lambda=\Z w+\Z w'$ is a lattice in $\C$;
\item the set of values of $\pi/M_S(a,b)$
is the set of primitive elements of the coset $w+4\Lambda$;
that is, the set 
\[
  \{uw+vw'\mid u,v\in\Z,\  \gcd(u,v)=1,\ u-1\equiv v\equiv0\pmod{4}\};
\] 
\item the set of values of $\pi/M_S(\pm a,\pm b)$
is the set of primitive elements of the coset $w+2\Lambda$;
that is, the set 
\[
  \{uw+vw'\mid u,v\in\Z,\  \gcd(u,v)=1,\ u-1\equiv v\equiv0\pmod{2}\}.
\] 
\end{enumerate}
\end{corollary}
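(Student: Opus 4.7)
The plan is to deduce the corollary directly from the preceding theorem, which already contains the essential content; what remains is a translation from coset language to the explicit parametrizations by pairs $(u,v)\in\Z^2$.

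First I would check that the hypotheses match. The assumption $a^2 = b^2 + c^2$ is equivalent to $c = \pm\sqrt{a^2-b^2}$, and the stipulation that $(a,c)$ be good selects exactly the sign used in the theorem. Thus the theorem applies with its $w_1$ equal to our $w=\pi/M(a,b)$ and its $w_3$ equal to our $w' = i\pi/M(a,c)$. The assertion $\Lambda = \Z w_1 + \Z w_3$ from the theorem then gives~(1) immediately.

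Next I would establish (2). Since $\{w,w'\}$ is a $\Z$-basis for $\Lambda$, every element of $\Lambda$ has the form $uw+vw'$ for a unique pair $(u,v)\in\Z^2$, and such an element is primitive if and only if $\gcd(u,v)=1$. Moreover $uw+vw'$ lies in the coset $w + 4\Lambda$ precisely when $u\equiv 1\pmod{4}$ and $v\equiv 0\pmod{4}$. The theorem identifies $\{P_S(a,b)\}$ with the set of primitive elements of $w_1 + 4\Lambda = w + 4\Lambda$, so combining these two observations yields the parametrization claimed in~(2).

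For (3), the analogous coset computation shows that $uw+vw'\in w+2\Lambda$ iff $u$ is odd and $v$ is even, and the theorem asserts that the complete set $\{P_S(\pm a, \pm b)\}$ is exactly the set of primitive elements of $w + 2\Lambda$. The only real verification needed is that the four cosets $\pm w_1 + 4\Lambda$ and $\pm w_1 + 2w_3 + 4\Lambda$ listed in the theorem genuinely partition $w+2\Lambda$ modulo $4\Lambda$; this follows from the congruences $-w \equiv w + 2w \pmod{4\Lambda}$ and $-w+2w'\equiv w + 2w+2w' \pmod{4\Lambda}$ together with the fact that $2\Lambda/4\Lambda$ is a Klein four-group generated by $2w$ and $2w'$. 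The main obstacle is really only this coset bookkeeping; since the theorem has already done the substantive work of identifying AGM values with primitive periods in prescribed cosets, no further analytic or geometric input is required.
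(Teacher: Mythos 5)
Your proof is correct and is essentially the only natural one; the paper omits a proof for this corollary precisely because it reduces to the translation you carry out. You correctly identify $w$ with $w_1$, $w'$ with $w_3$ (noting that $a^2 = b^2 + c^2$ together with the goodness of $(a,c)$ matches the sign convention $c=\sqrt{a^2-b^2}$ in the theorem), and then convert the coset-plus-primitivity description of the theorem into the explicit $(u,v)$ conditions via the $\Z$-basis $\{w,w'\}$. One small remark: for part~(3) the theorem's final sentence already asserts that the four cosets together cover exactly the primitive elements of $w_1+2\Lambda$, so your explicit verification that they partition $w_1+2\Lambda$ modulo $4\Lambda$ is a (correct) sanity check rather than an extra ingredient.
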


\section{Elliptic Logarithms} \label{sec:elog}
We now extend the method for computing periods of elliptic curves in
Section \ref{sec:periods} to give a method for computing elliptic
logarithms of points on elliptic curves. 

Let $E$ be an elliptic curve over $\C$ given by a Weierstrass equation
as before, and $\Lambda$ the lattice of periods of the differential
$dX/Y$ on~$E$, so that $E(\C)\cong\C/\Lambda$. An \emph{elliptic
  logarithm} of $P\in E(\C)$ is a value $z_P\in\C$ such that
$P=(\wp_{\Lambda}(z_P), \wp'_{\Lambda}(z_P))$. Note that $z_P$ is only
well-defined modulo $\Lambda$.  We wish to have an algorithm which can
compute the numerical value of the complex number~$z_P$, to any
required precision, from the coefficients of~$E$ and the coordinates
of~$P$ (which we assume are given exactly, or are available to
arbitrary precision).

Construct as before an isogeny chain $(E_n)$ with $E_0=E$, with
associated lattice chain $(\Lambda_n)$ (with $\Lambda_0=\Lambda$) and
AGM sequence $(a_n,b_n)$.  We will assume that the chain is
super-optimal with $|a_n-b_n| < |a_n+b_n|$ for all~$n\ge0$.  (This is
possible except when~$\Lambda_0$ is rectangular, and even then is
possible for two of the three super-optimal sequences).  Let $w_1,w_2$
be a $\Z$-basis for~$\Lambda$ such that
$\Lambda_n=\left<w_1,2^nw_2\right>$ for all~$n\ge0$.  We have
$2$-isogenies $\varphi_n:E_n\to E_{n-1}$ for~$n\ge1$, induced by the
natural maps $\C/\Lambda_n\to\C/\Lambda_{n-1}$.

\subsection{Coherent point sequences}

Consider sequences of points~$(P_n)_{n=0}^{\infty}$ where~$P_n\in
E_n(\C)$ satisfy $\varphi_n(P_n)=P_{n-1}$ for all~$n\ge1$.  Such a
sequence will be called \emph{coherent} if there exists $z\in\C$ such
that $P_n = \wp_n(z)$ for all~$n\ge0$; here, as above, we write
$\wp_n(z)$ for~$(\wp_{\Lambda_n}(z), \wp'_{\Lambda_n}(z))$.  If
such a~$z$ exists, it is uniquely determined
modulo~$\cap\Lambda_n=\Lambda_{\infty}=\left<w_1\right>$.

In general there are uncountably many point sequences with a fixed
starting point~$P_0$, since for each~$P_n\in E_n(\C)$ there are two
points~$P_{n+1}\in E_{n+1}(\C)$ with $\varphi_{n+1}(P_{n+1})=P_{n}$.
However, only countably many of these are coherent, since
$\wp_0^{-1}(P_0)$ is a coset of~$\Lambda_0$ in~$\C$, and hence
countable.

For example, taking $z=0$ shows that the trivial sequence $(O_n)$,
where $O_n$ is the base point on~$E_n$, is coherent.  Also, the
sequence with $P_n=T_n=(e_1^{(n)},0)$ is coherent, via $z=w_1/2$.

Given a point sequence $(P_n)$, for each~$n$ let
$C_n=\wp_n^{-1}(P_n)\subset\C$ be the complete set of all the elliptic
logarithms of~$P_n$, which is a coset of $\Lambda_n$ in~$\C$.  Since
$\Lambda_{n+1}$ has index~$2$ in~$\Lambda_n$, each $C_n$ is the
disjoint union of two cosets of~$\Lambda_{n+1}$, one of these being
$C_{n+1}$; the other is the set of elliptic logarithms of the second
point~$P_{n+1}'\in E_{n+1}(\C)$ such that
$\varphi_{n+1}(P_{n+1}')=P_n$.  Thus we have
\[
   C_0 \supset C_1 \supset \dots \supset C_n \supset C_{n+1} \supset \dots.
\]
The point sequence is coherent if and only if
$C_{\infty}=\cap_{n=0}^{\infty}C_n\not=\emptyset$,  in which case
$C_{\infty}$ is a coset of $\Lambda_{\infty}$ in~$\C$.

An argument similar to that used above for Lemma~\ref{lem:minimal}
shows the following.

\begin{lemma}
The sequence $(P_n)$ is coherent if and only if $C_{n+1}$ contains the
smallest element of~$C_n$ for almost all~$n\ge0$.
\end{lemma}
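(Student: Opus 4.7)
The plan is to adapt the proof of Lemma~\ref{lem:minimal} to the coset setting for one direction, and to exploit the descending nature of the chain $C_0\supset C_1\supset\cdots$ together with discreteness for the other. Throughout, I read ``the smallest element of $C_n$'' as ``some smallest element'' and handle possible ties by working with the finite set of minimizers.

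For the forward direction, I would assume $(P_n)$ is coherent, pick a smallest element $z_\infty$ of the nonempty coset $C_\infty=\bigcap_n C_n$, and write $C_n=z_\infty+\Lambda_n$ for every $n$. Since $\Lambda_0$ is discrete, the set
\[
F=\{w\in\Lambda_0:|z_\infty+w|\le|z_\infty|\}
\]
is finite, and it splits into those $w\in\langle w_1\rangle$ (for which $|z_\infty+w|=|z_\infty|$ by minimality of $z_\infty$ in $C_\infty$) and those outside $\langle w_1\rangle=\bigcap_n\Lambda_n$, which therefore lie outside $\Lambda_n$ for all sufficiently large $n$. Hence for $n$ large enough every smallest element of $C_n$ has the form $z_\infty+kw_1$, and so lies in $z_\infty+\langle w_1\rangle\subset z_\infty+\Lambda_{n+1}=C_{n+1}$.

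For the converse, I would assume that for all $n\ge n_0$ some smallest element $z_n$ of $C_n$ lies in $C_{n+1}$, and set $\mu_n=\min_{z\in C_n}|z|$. The inclusions $C_{n+1}\subset C_n$ make $\mu_n$ nondecreasing, while $z_n\in C_{n+1}$ yields $\mu_{n+1}\le|z_n|=\mu_n$; so $\mu_n$ is eventually constant, equal to some $\mu$. The finite sets
\[
S_n=\{z\in C_n:|z|=\mu\}
\]
are then nested, and nonempty by hypothesis, so $|S_n|$ is a nonincreasing sequence of positive integers and must stabilize; a constant nested sequence of sets is equal, so $S_n=S_N$ for all $n\ge N$ and some $N$. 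Then $\emptyset\ne S_N\subset\bigcap_{n\ge N}C_n=\bigcap_n C_n=C_\infty$, so $C_\infty\ne\emptyset$ and $(P_n)$ is coherent.

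The main obstacle is the forward direction, where the key geometric input is that periods of $\Lambda_0$ lying outside $\langle w_1\rangle$ eventually escape the sublattices $\Lambda_n$ — precisely the mechanism already exploited in Lemma~\ref{lem:minimal}. The bookkeeping around nonunique minimizers, in both directions, is handled by replacing individual chosen minima by the full finite set of minimizers.
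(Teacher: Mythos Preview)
Your proof is correct. The forward direction adapts the argument of Lemma~\ref{lem:minimal} to cosets exactly as the paper indicates: finitely many competitors in $F$, and those outside $\langle w_1\rangle$ drop out of $\Lambda_n$ eventually. For the converse, which the paper leaves entirely to the reader, your stabilization argument via the nested finite sets $S_n$ is clean and works; the natural alternative, parallel to the converse in Proposition~\ref{prop:finitely-many-chain}, would be to fix a smallest element $z_{n_0}\in C_{n_0}$ and show inductively that it lies in every $C_n$. Both routes are short, and your handling of non-unique minimizers (working with the full set of minimizers rather than a chosen one) is a sensible precaution that the one-line reference in the paper does not address.
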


\subsection{The elliptic logarithm formula}

\begin{proposition}\label{prop:elog-formula}
With notation as above, let $(P_n)$ be a coherent point sequence
determined by $z\in\C$.  Assume that $2z\not\in\Lambda_{\infty}$. Then
for~$n$ sufficiently large, we have $P_n\not= O_n$, and
write~$P_n=(x_n,y_n)$.  Let $P_{\infty}=(x_{\infty},y_{\infty})\in
E_{\infty}(\C)$ be the limit point, defined by
$(x_{\infty},y_{\infty}) = \lim_{n\to\infty}(x_n,y_n)$.  Set
$M=\pi/w_1$, and
\[
    t_{\infty} = -\frac{1}{2}y_{\infty}/(x_{\infty}+M^2/3).
\]  
Then $t_{\infty}\not=0,\infty$, and (modulo~$\Lambda_{\infty}$) we have
\begin{equation}
    z = \frac{1}{M}\arctan\left(\frac{M}{t_{\infty}}\right) 
      = \frac{w_1}{\pi}\arctan\left(\frac{\pi}{w_1t_{\infty}}\right).
\label{eqn:elog-formula}
\end{equation}
\end{proposition}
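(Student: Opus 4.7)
The plan is to reduce the proposition to the explicit limit formulas in Proposition~\ref{prop:wp-limits} and then invert an elementary trigonometric identity.

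First I would verify that the hypothesis makes everything well-defined. Since $\Lambda_\infty=\langle w_1\rangle$ and $M=\pi/w_1$, the condition $2z\notin\Lambda_\infty$ is equivalent to $2Mz\notin\pi\Z$, i.e., $\sin(2Mz)\ne0$, so neither $\sin(Mz)$ nor $\cos(Mz)$ vanishes. The weaker condition $z\notin\Lambda_\infty$ (a consequence of $2z\notin\Lambda_\infty$) gives $P_n\ne O_n$ for $n$ sufficiently large, because the lattices $\Lambda_n$ are nested with intersection $\Lambda_\infty$: if $z\notin\Lambda_\infty=\bigcap_n\Lambda_n$, then $z\notin\Lambda_n$ for all $n\ge n_0$ for some $n_0$, and then $P_n=\wp_n(z)\ne O_n$ is a well-defined affine point.

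Next I would apply Proposition~\ref{prop:wp-limits}. Coherence gives $(x_n,y_n)=(\wp_{\Lambda_n}(z),\wp'_{\Lambda_n}(z))$, and the convergence $(x_n,y_n)\to(x_\infty,y_\infty)$ is exactly its content. In terms of $M$, the formulas become
\[
x_\infty+\frac{M^2}{3}=\frac{M^2}{\sin^2(Mz)},\qquad y_\infty=-2M^3\,\frac{\cos(Mz)}{\sin^3(Mz)},
\]
both finite and nonzero. Substituting into the definition of $t_\infty$ and simplifying,
\[
t_\infty=-\frac{y_\infty}{2\bigl(x_\infty+M^2/3\bigr)}=\frac{M^3\cos(Mz)/\sin^3(Mz)}{M^2/\sin^2(Mz)}=M\cot(Mz),
\]
so $t_\infty\ne0,\infty$ and $\tan(Mz)=M/t_\infty$.

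Inverting gives $Mz\equiv\arctan(M/t_\infty)\pmod{\pi}$, and dividing by $M$ yields $z\equiv(1/M)\arctan(M/t_\infty)\pmod{\pi/M}$; since $w_1=\pi/M$ generates $\Lambda_\infty$, this is exactly \eqref{eqn:elog-formula} modulo $\Lambda_\infty$. The only subtle point I anticipate is branch selection for $\arctan$: the formula determines $Mz$ only modulo $\pi$, and this is precisely the indeterminacy of $z$ modulo $\Lambda_\infty=\langle w_1\rangle$, so the two ambiguities match and no additional branch choice is needed at this stage. Choosing the correct lift to recover the full elliptic logarithm modulo $\Lambda_0$ will be the real work of Algorithm~\ref{algo:elog}; the present proposition is the limit statement feeding into that algorithm.
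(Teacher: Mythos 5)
Your proposal is correct and follows essentially the same route as the paper: both reduce to the limit formulas of Proposition~\ref{prop:wp-limits}, compute $t_\infty=M\cot(z\pi/w_1)$, use $2z\notin\Lambda_\infty$ to rule out $t_\infty=0,\infty$, and observe that the multivaluedness of $\arctan$ exactly matches the ambiguity of $z$ modulo $\Lambda_\infty=\langle w_1\rangle$. The only cosmetic difference is your use of $M$ in place of $\pi/w_1$ throughout, which is harmless since $M=\pi/w_1$.
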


\begin{proof}
Since $z\not\in\Lambda_{\infty}$, for all~$n\gg0$ we have
$z\not\in\Lambda_n$, so that $P_n\not= O_n$.
Proposition~\ref{prop:wp-limits} gives expressions for the coordinates
of~$P_{\infty}=(x_{\infty},y_{\infty})\in E_{\infty}(\C)$ in terms of
$M$, $s=\sin(z\pi/w_1)$ and~$c=\cos(z\pi/w_1)$:
\[
x_{\infty} = M^2\left(\frac{1}{s^2}-\frac{1}{3}\right); \qquad
y_{\infty} = -2M^3\frac{c}{s^3}.
\]
Note that $s\not=0$, since $z\not\in\Lambda_{\infty}$; also,
$s\not=\pm1$ (and $c\not=0$) since $2z\not\in\Lambda_{\infty}$.  Thus
$x_{\infty}+M^2/3 = M^2/s^2 \not= 0$, and $t_{\infty} =
-\frac{1}{2}y_{\infty}/(x_{\infty}+M^2/3) = Mc/s\not=0$, giving 
formula~(\ref{eqn:elog-formula}).  Taking different values of the
multiple-valued function~$\arctan$ changes~$z$ by integer multiples
of~$w_1$; so this formula gives a well defined value for $z$
modulo~$\Lambda_{\infty}$, as desired.
\end{proof}

This result does also apply when $z=\pm w_1/2\pmod{\Lambda_{\infty}}$,
for then $s=\pm1$ and~$c=0$, so $x_{\infty}+M^2/3=M^2$ and
$y_{\infty}=0$, giving $t_{\infty}=0$ and $z=w_1$; this is the case we
used above to compute periods.

Proposition~\ref{prop:elog-formula}, and in particular
formula~(\ref{eqn:elog-formula}), is the key to our elliptic logarithm
algorithm, in which we will compute a sequence~$(t_n)$ iteratively
such that $\lim t_n=t_{\infty}$.  However, we derived~(12) by starting
from a value of~$z\in\C$, rather than from the coordinates of a
point~$P=\wp(z)\in E(\C)$.  In order to produce an algorithm for
computing~$z$ from the coordinates of~$P$, we must show how to
construct inductively a suitable coherent sequence of points, so that
the limits $x_{\infty}$, $y_{\infty}$ and~$t_{\infty}$ exist. We will
do this in the next subsection.

\begin{remark}
Our formula (\ref{eqn:elog-formula}) is similar to the one used in
Cohen's algorithm \cite[Algorithm 7.4.8]{coh} for computing elliptic
logarithms of real points on elliptic curves defined over~$\R$.  The
variable denoted $c_n$ in \cite{coh} is related to our~$t_n$ (defined
below) by $c_n^2=t_n^2+a_n^2$; setting
$c_{\infty}=\lim_{n\to\infty}c_n$, so that $c_{\infty}^2 =
t_{\infty}^2 + M^2$, we can rewrite $z_P$ as
\[
   z_P = \pm \frac{1}{M}\arcsin\left(\frac{M}{c_{\infty}}\right),
\] 
which is similar (up to sign) to the output of Cohen's algorithm.
This approach leaves an ambiguity of the sign of $z_P$, which is
resolved in \cite{coh} by considering the sign of $y_0$ at the end,
something which is only possible in the real case. Using $t_{\infty}$
instead of~$c_{\infty}$ avoids the ambiguity.
\end{remark}

\subsection{The elliptic logarithm iteration}

Let $P=(x,y)\in E(\C)$, where as above $E$ is the elliptic curve with
equation
\[
  E:\qquad   Y^2 = 4(X-e_1)(X-e_2)(X-e_3).
\]  
In order to compute the elliptic logarithm~$z_P$ of~$P$ using
(\ref{eqn:elog-formula}), we need to find a suitable coherent point
sequence $(P_n)$ starting at~$P_0=P$.  We iteratively compute $P_1$,
$P_2$, $\dots$, using the explicit formulas for the isogenies
$\varphi_n$; at each stage there are two possible choices for~$P_n$,
determined by choosing a specific sign for a square root.  The main
issue is how to make these choices in such a way that the sequences
converge.

It is simpler in practice to use alternative models for the elliptic
curves in the sequence, in which the isogeny formulas are simpler.  We
introduce these now.  Let $E_1'$ be the curve with equation
\[
   E_1':\qquad R^2 = (T^2+a^2)/(T^2+b^2).
\]
We regard $E_1'$ as a projective curve in~$\P^1\times\P^1$, with
points at infinity given by $(t,r)=(\infty,\pm1), (\pm bi,\infty)$.
 
Define a map $\alpha:E_1'\to E$ by\footnote{The sign of $y$ here is
  chosen to avoid a minus sign in the elliptic logarithm formula
  (\ref{eqn:elog-formula}).}
$(t,r)\mapsto(x,y)=(t^2+e_1,-2rt(t^2+b^2))$, where as usual
$a^2=e_1-e_3$ and $b^2=e_1-e_2$.  This map is unramified and
has~degree~$2$; it sends $(\infty,\pm1)\mapsto O_E$, $(\pm bi,
\infty)\mapsto(e_2,0)$, $(0,\pm a/b)\mapsto(e_1,0)$ and $(\pm
ai,0)\mapsto(e_3,0)$.

Write $a_1,b_1$ for the arithmetic and geometric means of $a,b$ as
usual,  set 
\begin{align*}
e_1' &= (a_1^2 + b_1^2)/3 = (a^2+6ab+b^2)/12,\\ 
e_2' &= (a_1^2 -2 b_1^2)/3 = (a^2-6ab+b^2)/12,\\ 
e_3' &= (b_1^2 -2 a_1^2)/3 = -(a^2+b^2)/6,
\end{align*}
so that $E_1$ has with Weierstrass equation
\[
   E_1:\qquad   Y_1^2 = 4(X_1-e_1')(X_1-e_2')(X_1-e_3').
\]
Now $E_1'\cong E_1$ via the isomorphism~$\theta$ given by $(t,r)
\mapsto (x_1,y_1)$ where
\[
(x_1,y_1) = (\frac{1}{2}(t^2+r(t^2+a^2)+\frac{1}{6}(a^2+b^2)),
             t(t^2+r(t^2+a^2)+\frac{1}{2}(a^2+b^2)),
\]
with inverse
\[
(x_1,y_1) \mapsto (t,r) = \left(
              \frac{3y_1}{6x_1+a^2+b^2},
              \frac{12x_1+5a^2-b^2}{12x_1+5b^2-a^2}
        \right).
\]
The composite~$\alpha\circ\theta^{-1}:E_1\to E_1'\to E$ is the
$2$-isogeny denoted~$\varphi$ in Section~\ref{sec:isogeny}.

Given a complete $2$-isogeny chain $(E_n)_{n\ge0}$ with $E_0=E$, as in
Section~\ref{sec:isogeny}, we define for each~$n\ge1$ a curve~$E_n'$
with equation $R_n^2= (T_n^2+a_{n-1}^2)/(T_n^2+b_{n-1}^2)$, isomorphic to
$E_n$ via~$\theta_n$ (defined as for $\theta=\theta_1$ as above);
these fit into a commutative diagram
$$
\xymatrix{
\cdots \ar[r] & E_n'\ar[r]^{\varphi_n'}\ar[d]^{\theta_n} & E_{n-1}'\ar[r]\ar[d]^{\theta_{n-1}} & \cdots \ar[r] & E_1' \ar[d]^{\theta_{1}}\ar[dr]^{\alpha} \\ 
\cdots \ar[r] & E_n\ar[r]^{\varphi_n} & E_{n-1}\ar[r] & \cdots \ar[r]
& E_1 \ar[r]^{\varphi_1} & E_0
}
$$ where $\varphi_n':E_{n}'\to E_{n-1}'$ is the $2$-isogeny which
makes the diagram commute. A little algebra shows that~$\varphi_n'$ is
given by
\[
   r_{n-1} = \frac{t_n^2+a_{n-1}a_{n-2}}{t_n^2+a_{n-1}b_{n-2}} 
          = \frac{a_{n-2}r_n^2-a_{n-1}}{-b_{n-2}r_n^2+a_{n-1}}, \qquad
   t_{n-1} = \frac{t_n}{r_n}.
\]

For any point sequence~$(P_n)$ (with $P_n\in E_n(\C)$ and
$\varphi_{n+1}(P_{n+1})=P_n$ for all~$n\ge0$) we set
$P_n'=(r_n,t_n)=\theta_n^{-1}(P_n)\in E_n'(\C)$ for $n\ge1$.  Since
$\alpha(P_1')=P_0$, we have
\[
   r_1^2 = \frac{x_0-e_3}{x_0-e_2}, \qquad \text{and}\qquad
   t_1 = -\frac{y_0}{2r_1(x_0-e_2)} = \sqrt{x_0-e_1};
\] 
note that these equations determine $r_1$ (and then~$t_1$) up to sign.
Next, from $\varphi_n'(P_n')=P_{n-1}'$ for $n\ge2$, we have
\[
   r_n^2 = \frac{a_{n-1}(r_{n-1}+1)}{b_{n-2}r_{n-1}+a_{n-2}}, 
           \qquad \text{and}\qquad   t_n   = r_{n}t_{n-1};
\]
again, these determine $(r_n,t_n)$ up to sign.

Hence we may construct all possible point sequences $(P_n')$ with
$P_n'\in E_n'(\C)$ for~$n\geq1$, starting from
$P_0=(x_0,y_0)\in E_0(\C)$ with $y_0\not=0$, by initialising
\[
   r_1 = \sqrt\frac{x_0-e_3}{x_0-e_2}, \qquad \text{and}\qquad
   t_1 = -\frac{y_0}{2r_1(x_0-e_2)}
\] 
to determine $P_1'=(r_1,t_1)$, and then iterating the following to
obtain $P_n'=(r_n,t_n)$ for~$n\ge2$:
\[
   r_n = \sqrt\frac{a_{n-1}(r_{n-1}+1)}{b_{n-2}r_{n-1}+a_{n-2}}, 
         \qquad \text{and}\qquad   t_n   = r_{n}t_{n-1}.
\]
Suitable choices of signs of~$r_n$ will be discussed below, which will
ensure that these sequences converge.  Then we will have
$r_{\infty}=\lim r_n=1$ and $t_{\infty}=\lim t_n$ satisfying
\[
   x_{\infty} =  t_{\infty}^2+\frac{2}{3}M^2, \qquad 
   y_{\infty} = -2t_{\infty}(t_{\infty}^2+M^2),   
\]
where $M=\AGM(a,b)$ as usual.  It follows that
\[
   t_{\infty} = \frac{-y_{\infty}/2}{x_{\infty}+M^2/3},
\]
as in the statement of Proposition~\ref{prop:elog-formula}.

\subsection{Choice of signs in the iteration}

We now show that we do obtain coherent, convergent sequences, provided
that for all (or all but finitely many)~$n$ we choose the sign
of~$r_n$ so that~$\Re(r_n)\ge0$; always assuming that the isogeny
sequence itself is optimal.

\begin{proposition}\label{prop:r-signs}
With the notation of the previous section, assume that the AGM
sequence satisfies $\Re(a_n/b_n)>0$ for all~$n\ge0$.

If $\Re r_n\ge0$ for all~$n\ge1$, then the point sequence
$(P_n)=(\theta_n(r_n,t_n))$ determined by the iteratively defined
sequence of pairs $(r_n,t_n)$ is coherent.

The same conclusion holds if $\Re r_n\ge0$ for all but finitely
many~$n\ge1$.
\end{proposition}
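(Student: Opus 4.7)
My plan is to prove the proposition in two stages: first establish convergence of the pair sequence $(r_n,t_n)$ under the sign hypothesis, then use that convergence together with a lock-in argument to deduce coherence.

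For \emph{convergence}, starting from the recursion $r_n^2 = a_{n-1}(r_{n-1}+1)/(b_{n-2}r_{n-1}+a_{n-2})$ and using the AGM identity $a_{n-2}+b_{n-2}=2a_{n-1}$, a short calculation yields
\[
   r_n^2 - 1 = \frac{(a_{n-1}-b_{n-2})\,(r_{n-1}-1)}{b_{n-2}r_{n-1}+a_{n-2}}.
\]
Since $a_n,b_n\to M\neq 0$ (from the hypothesis $\Re(a_n/b_n)>0$ combined with Proposition~\ref{prop:cox}), the numerator factor $a_{n-1}-b_{n-2}$ tends to zero as fast as the AGM converges (quadratically), while the denominator stays bounded away from~$0$ as soon as $\Re r_{n-1}\ge 0$. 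Hence $r_n^2\to 1$, and the sign hypothesis forces $r_n\to 1$. The same identity provides geometric bounds $|r_n-1|\le C|a_{n-1}-b_{n-2}|\,|r_{n-1}-1|$, so $\sum|r_n-1|<\infty$; consequently $t_n=t_1\prod_{k=2}^n r_k$ converges to some $t_\infty\in\C^*$.

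For \emph{coherence}, fix a large~$N$ and pick any $z\in\C$ with $\wp_N(z)=P_N$. Then $\tilde P_m=\wp_m(z)$ for $m\ge N$ is a coherent extension of $P_N$, and by Proposition~\ref{prop:wp-limits} the corresponding $(\tilde r_m,\tilde t_m)=\theta_m^{-1}(\tilde P_m)$ satisfies $\tilde r_m\to 1$. Both our algorithm's iteration and this coherent extension start from $(r_N,t_N)=(\tilde r_N,\tilde t_N)$ and obey the same quadratic recursion for $r_m^2$, so at each step they can differ only by a sign choice. The key claim is that for sufficiently large $N$, the coherent extension satisfies $\Re\tilde r_m>0$ for \emph{every} $m\ge N$. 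Granting this, both sequences pick the positive-real-part root at every step and therefore coincide for all $m\ge N$. The initial segment $(P_0,\dots,P_{N-1})$ is then recovered by applying $\varphi_N\circ\cdots\circ\varphi_{m+1}$ to $P_N=\wp_N(z)$, which by the commutative diagram in Figure~\ref{fig:isogenies} equals $\wp_m(z)$; hence $(P_n)$ is coherent with elliptic logarithm~$z$.

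The main obstacle is justifying the claim $\Re\tilde r_m>0$ for every $m\ge N$, not merely for large~$m$. My approach combines two facts: first, the convergence $(r_N,t_N)\to(1,t_\infty)$ just established means that for large~$N$ the starting point lies in an arbitrarily small neighborhood of the ``steady state''~$(1,t_\infty)$; second, Proposition~\ref{prop:wp-limits} gives a uniform exponential convergence rate (the error $\wp_m(z)-\wp_\infty^*(z)$ is $O(4^{-m})$) whose implied constants depend continuously on~$z$. Choosing the representative~$z$ of the elliptic logarithm of~$P_N$ to lie near a fixed preimage of $P_\infty$ under $\wp_\infty^*$, the coherent extension then stays close to the asymptotic regime throughout~$m\ge N$, guaranteeing $\Re\tilde r_m>0$. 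Finally, the ``all but finitely many'' version of the hypothesis is handled identically, since only the tail of the sign condition enters both the convergence argument and the lock-in.
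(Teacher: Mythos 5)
Your Stage~1 is sound as far as it goes: the identity
\[
  r_n^2-1 = \frac{(a_{n-1}-b_{n-2})(r_{n-1}-1)}{b_{n-2}\,r_{n-1}+a_{n-2}}
\]
is correct (using $2a_{n-1}=a_{n-2}+b_{n-2}$, one finds $a_{n-1}-a_{n-2}=-(a_{n-1}-b_{n-2})=-\tfrac12(a_{n-2}-b_{n-2})$), and combined with the sign choices $\Re r_n\ge0$ it does give $r_n\to1$ and convergence of $t_n$. But that establishes convergence of the iterates, not coherence of the point sequence, which is what the proposition actually asserts.

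The gap is in Stage~2. You correctly isolate the crux --- showing that the coherent lift $\tilde r_m$ of the algorithm's $P_N$ has $\Re\tilde r_m>0$ for \emph{every} $m\ge N$, not just for large $m$ --- but the proposed justification is a heuristic, not a proof. Proposition~\ref{prop:wp-limits} gives only the pointwise limit of $\wp_{\Lambda_n}(z)$; it contains neither the $O(4^{-m})$ error bound ``with constants depending continuously on $z$'' that you invoke, nor the consequence that the coherent extension ``stays close to the asymptotic regime throughout $m\ge N$.'' To make this route work you would need to prove a genuine uniform tail estimate for the Weierstrass series (of the form $|\wp_{\Lambda_m}(z)-\wp_\infty(z)|\le C\cdot2^{-m}$ for $z$ in a fixed compact set avoiding all $\Lambda_m$), show that a representative $z_N$ of the elliptic logarithm of $P_N$ can be chosen in such a compact set for $N$ large, and then verify that small perturbations of $P_\infty$ keep $r$ in the right half-plane at every step of the recursion --- none of which is done. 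The paper instead avoids all estimates by a structural argument: it regards $r_n=f_n(z_n)$ as a value of a degree-$2$ elliptic function $f_n$ for $\Lambda_n$ with $f_n(z+2^{n-1}w_2)=-f_n(z)$ and $f_n(w_1/2)=a_{n-1}/b_{n-1}$, takes $\RR_n^o$ to be the connected component of the preimage of the right half-plane containing $w_1/2$, and proves the nesting $\RR_n^o\subset\RR_{n+1}^o$ (Lemma~\ref{lem:halfplane}); coherence is then immediate, since once $z_n\in\RR_n^o$ the sign choice $\Re r_{n+1}\ge0$ automatically fixes $z_{n+1}=z_n$. That nesting lemma is the real content of the proof, and your proposal offers no substitute for it.
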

\begin{proof}
Recall that $\Lambda_n$ is the period lattice of~$E_n$ for $n\ge0$,
with $\Z$-basis $w_1,w_2$ such that $w_1=\pi/M(a_0,b_0)$ generates
$\cap_n\Lambda_n$, and $\Lambda_n=\ideal{w_1,2^nw_2}$ for all~$n\ge0$.
So for each~$n$ there exists $z_n\in\C$, uniquely determined
modulo~$\Lambda_n$, such that $x_n=\wp_{\Lambda_n}(z_n)$
and~$y_n=\wp_{\Lambda_n}'(z_n)$.  We wish to show that the $z_n$ may
be chosen independently of~$n$.

Since
\[
     r_n =   \frac{12x_n+5a_{n-1}^2-b_{n-1}^2}{12x_n+5b_{n-1}^2-a_{n-1}^2},
\]
we may regard~$r_n$ as the value at~$z_n$ of an elliptic
function~$f_n$ of degree~$2$ with respect to~$\Lambda_n$.  Similarly
its square,
\[
     r_n^2 = \frac{x_{n-1}-e_3^{(n-1)}}{x_{n-1}-e_2^{(n-1)}},
\]
is the value at~$z_n$ of $f_n^2$, which is an elliptic function with
respect to the larger lattice~$\Lambda_{n-1}$.  It follows that
\[
   f_{n}(z+2^{n-1}w_2)=-f_{n}(z)
\]
for all~$z\in\C$ and all~$n\ge1$.

Since 
\[
  \wp_n(0) = O_E = \theta_n((\infty,1))
\]
and 
\[
  \wp_n(w_1/2) = (e_1^{(n)},0) = \theta_n((0,a_{n-1}/b_{n-1})),
\]
we have $f_{n}(0)=1$ and $f_{n}(w_1/2)=a_{n-1}/b_{n-1}$ for
all~$n\ge1$.

We now consider the preimage~$\RR_n$ of the right half-plane
under~$f_{n}$, for $n\ge1$.  Since $f_{n}(w_1/2)=a_{n-1}/b_{n-1}$
and~$\Re(a_{n-1}/b_{n-1})>0$, this contains~$w_1/2$ for all~$n$.  Let
$\RR_n^o$ denote the connected component of~$\RR_n$ which
contains~$w_1/2$.  Both $\RR_n$ and~$\RR_n^o$ are invariant under
translation by~$w_1$ (by periodicity of~$f_{n}$), and $\RR_n$ is the
union of all translates of~$\RR_n^o$ by multiples of~$2^{n}w_2$.  The
preimage of the left half-plane under~$f_n$ is
$\LL_n=\RR_n+2^{n-1}w_2$, which is the union of the translates
of~$\RR_n^o$ by odd multiples of~$2^{n-1}w_2$.

Consider a point $P_n=\wp_n(z_n)\in E_n(\C)$, where $z_n\in\RR_{n}^o$.
Its preimages in~$E_{n+1}(\C)$ are $\wp_{n+1}(z_n)$ and
$\wp_{n+1}(z_{n}')$, where $z_{n}'=z_n+2^{n}w_2$.  One of $z_n$,
$z_{n}'$ lies in $\RR_{n+1}$, the other in $\LL_{n+1}$.  Since
$w_1/2\in\RR_k^o$ for all~$k$, one can show that
$\RR_{n}^o\subset\RR_{n+1}^o$ (see Lemma~\ref{lem:halfplane} below).
Hence, in fact, $z_n\in\RR_{n+1}^o$ and $z_{n}'\in\LL_{n+1}$.

Hence, by choosing the sign of each~$r_n$ for~$n\ge1$ so that it lies
in the right half-plane (for all~$n\ge1$), we ensure that each
$P_{n}=\wp_n(z_n)$, where $z_n\in\RR_1^o$ does not depend on~$n$.
Hence the associated point sequence is coherent, as required.

For the last part, if $\Re r_n>0$ only for~$n>n_0\ge0$, then we simply
apply the above argument to $E_{n_0}$ and $(P_n)_{n\ge n_0}$, noting
that $P_{n_0}$ is a lift of~$P_0$ to~$E_{n_0}(\C)$, and that every
elliptic logarithm of~$P_{n_0}$ is also one of~$P_0$.
\end{proof}

\begin{lemma}\label{lem:halfplane}
In the notation of Proposition~\ref{prop:r-signs},
$\RR_{n}^o\subset\RR_{n+1}^o$ for all $n\ge1$.
\end{lemma}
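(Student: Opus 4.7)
The plan is to exploit an identity of the form $f_n(z)=\mu_n(f_{n+1}(z)^2)$, where $\mu_n$ is a specific M\"obius transformation arising from $\varphi_{n+1}'$, combined with the fact that $\mu_n$ cannot send any non-positive real number into the open right half-plane. Together with the connectedness of $\RR_n^o$ and the observation that $w_1/2$ lies in both $\RR_n^o$ and $\RR_{n+1}^o$, this should force the desired inclusion.

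First I would read off the iteration formula from the preceding subsection (with the index shifted up by one) as the functional identity
\[
   f_n(z) \;=\; \mu_n\!\bigl(f_{n+1}(z)^2\bigr), \qquad \mu_n(v) \;=\; \frac{a_{n-1}v-a_n}{a_n-b_{n-1}v},
\]
of meromorphic functions on $\C/\Lambda_{n+1}$, noting $\mu_n(0)=-1$ and $\mu_n(1)=1$. Next, $w_1/2\in\RR_n^o\cap\RR_{n+1}^o$ is immediate from $f_k(w_1/2)=a_{k-1}/b_{k-1}$ for $k=n,n+1$ together with the hypothesis $\Re(a_k/b_k)>0$.

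The main obstacle is to verify that $\mu_n\bigl((-\infty,0]\bigr)\subset\{w:\Re w<0\}$. Substituting $a_n=(a_{n-1}+b_{n-1})/2$ and writing $v=-t$ with $t\ge 0$, the value simplifies to $\mu_n(-t)=-(\lambda s+1)/(\lambda+s)$ with $\lambda=a_{n-1}/b_{n-1}$ and $s=2t+1\ge 1$, and a short expansion gives $\Re((\lambda s+1)(\bar\lambda+s))=s|\lambda|^2+(s^2+1)\Re\lambda+s>0$, using $\Re\lambda>0$. Once this is in hand, $z\in\RR_n$ forces $f_{n+1}(z)^2=\mu_n^{-1}(f_n(z))\notin(-\infty,0]$, so $f_{n+1}(z)\notin i\R$ and hence $\RR_n\subset\RR_{n+1}\sqcup\LL_{n+1}$; since $\RR_n^o$ is connected and contains $w_1/2\in\RR_{n+1}^o$, we conclude $\RR_n^o\subset\RR_{n+1}^o$ as claimed. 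Everything outside the M\"obius positivity inequality is topological bookkeeping.
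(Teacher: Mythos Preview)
Your proof is correct and follows essentially the same approach as the paper: both establish the functional identity $f_n=h_{n+1}(f_{n+1}^2)$ for the same M\"obius transformation, both reduce the inclusion to showing that this transformation sends the non-positive real axis into the open left half-plane, and both verify this via the substitution $s=2t\pm1$ and $\lambda=a_{n-1}/b_{n-1}$ together with $\Re\lambda>0$, finishing with the connectedness argument through $w_1/2$. Your organization is slightly cleaner (you argue the contrapositive directly and include the endpoint $v=0$, where $\mu_n(0)=-1$), whereas the paper phrases it as ``if the sign were not constant there would be $z$ with $f_{n+1}(z)^2\in\R_{<0}$''; but the substance is identical.
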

\begin{proof}
It suffices to show that $\Re f_{n+1}(z)$ has constant sign for
$z\in\RR_n^o$, since this sign is positive for $z=w_1/2\in\RR_n^o$.
If not, then there exists~$z\in\RR_n^o$ such that $\Re f_{n+1}(z)<0$,
so $f_{n+1}(z)^2$ is real and negative.  We show this to be
impossible.

We have
\[
     r_{n-1} =
     \frac{a_{n-2}r_{n}^2-a_{n-1}}{-b_{n-2}r_{n}^2+a_{n-1}} =
     h_n(r_{n-1}^2) = g_n(r_{n-1}), 
\]
say, where $h_n$ is the linear fractional transformation
\[
            z \mapsto \frac{a_{n-2}z-a_{n-1}}{-b_{n-2}z+a_{n-1}},
\]
and $g_n(z)=h_n(z^2)$.  This implies that
\[
       f_{n}(z) = g_{n+1}(f_{n+1}(z)) = h_{n+1}(f_{n+1}(z)^2).
\]

To complete the proof we show that the image of the negative real axis
under~$h_n$ is contained in the left half-plane, for all~$n\ge1$.  Let
$t\in\R$ be negative, and set $s=2t-1<-1$, and
$\alpha=a_{n-2}/b_{n-2}$; then
\[
   h_n(t) = \frac{s\alpha-1}{\alpha-s},
\]
and we leave it to the reader to check that this has negative
real part when $s<-1$ and $\Re\alpha>0$.
\end{proof}

We remark that this lemma implies that we always have $\Re r_n>0$
for~$n\ge2$.  It is possible to have $\Re r_1=0$; this occurs if and
only if~$x_0$ lies on the open line segment between~$e_2$ and~$e_3$.

\subsection{The elliptic logarithm algorithm}

We summarise this section with the following algorithm.

\begin{algorithm}[Complex Elliptic Logarithm]\label{algo:elog}
Given an elliptic curve $E$ defined over $\C$ by the Weierstrass
equation $Y^2=4(X-e_1)(X-e_2)(X-e_3)$, and a non-$2$-torsion point
$P\in E(\C)$, compute an elliptic logarithm of $P$.

\bigskip
\textbf{Input:} $E$, with roots $e_1,e_2,e_3$, and $P=(x_0,y_0)\in
E(\C)$, with $y_0\not=0$.
\begin{enumerate}
\item Set $a_0 = \sqrt{e_1-e_3}$ and $b_0 = \sqrt{e_1-e_2}$, choosing
  the numbering of the roots (if necessary) and the signs so that
  $|a_0-b_0|<|a_0+b_0|$.
\item Set $r = \sqrt{(x_0-e_3)/(x_0-e_2)}$, with $\Re r\ge0$.
\item Set $t = -y_0/(2r(x_0-e_2))$ (so $t^2=x_0-e_1$).
\item Repeat the following, for $n=1,2,\dots$:
\begin{enumerate}
\item set
\[
   a_n = \frac{a_{n-1}+b_{n-1}}{2},\quad b_n = \sqrt{a_{n-1}b_{n-1}},
\]
choosing the sign of $b_n$ so that $|a_n-b_n| < |a_n+b_n|$;
\item set $r \leftarrow \sqrt{{a_{n}(r+1)}/{(b_{n-1}r+a_{n-1})}}$,
  with $\Re r >0$.
\item set $t \leftarrow rt$.
\end{enumerate}
until $|a_n/b_n-1|$ and~$|r-1|$ are sufficiently small.  Set $M=\lim
a_n$.
\end{enumerate}

\textbf{Output:}
$$
   z_P = \frac{1}{M}\arctan\left(\frac{M}{t}\right).
$$
\end{algorithm}

Note that the output value of $z_P$ may not be in the fundamental
parallelogram of the period lattice $\Lambda$.  However, assuming that
the usual range for the $\arctan$ function is used, where $-\pi/2 <
\Re\arctan(x)\le\pi/2$, we will have $z_P=xw_1+iyw_1$ with $x,y\in\R$
and~$-1/2<x\le1/2$.

For points~$P$ of order~$2$, choose the labelling of the roots so that
$P=(e_1,0)$ and then take $z_P=w_1/2=\pi/(2M)$ where $M=
M(\sqrt{e_1-e_3}, \sqrt{e_1-e_2})$.

\subsection{The real case}

For elliptic curves defined over~$\R$ there is some advantage in
adapting the algorithm to use real arithmetic where possible, even
though the algorithm as given above works perfectly well in this
situation.  We divide into cases as in sections \ref{sec:special-i}
and \ref{sec:special-ii} above.

\subsubsection{Curves with positive discriminant}
Order the roots, which are all real, as in
section~\ref{sec:special-i}, so that $e_1>e_2>e_3$; the real and
imaginary periods~$w_1,w_2$ are then given
by~\eqref{eqn:pos-real-periods}.

Let $P=(x_0,y_0)\in E(\R)$ with $2P\not=0$ (so $y_0\not=0$).  If $P$
is in the connected component of the identity of $E(\R)$ then
$x_0>e_1$, and it is immediate from the formulae given above that as
well as all $a_n,b_n$ being real and positive, so too are all $r_n$,
and the $t_n$ are real and with constant sign (opposite to that
of~$y_0$).  Hence~$z_P$, the output of the algorithm, is real and in
the interval $|z_P|<w_1/2$.

Now suppose that $e_2>x_0>e_3$, so that $P$ is in the other real
component.  Now $z_P=x_P+w_2/2$ where $x_P$ is real, and it suffices
to compute~$x_P$.  To do this we may replace~$P$ by~$P'=P+(e_3,0)$
which is in the identity component and has elliptic logarithm equal
to~$x_P$.  A short calculation shows that we may compute~$x_P$ using
the usual iteration, with the positive real initial values
\[
     r' = a_0/\sqrt{e_1-x_0}; \qquad t'=r'y_0/2(x_0-e_3).
\]

\subsubsection{Curves with negative discriminant}
As in \ref{sec:special-ii}, we order the roots so that $e_1$ is real
and $\Im e_2>0$.  Set $a_0=\sqrt{e_1-e_3}=x+yi$ where $x,y>0$.  The
real period is $w_1=\pi/M(a_0,b_0)=\pi/M(x,R)$ where $R=|a_0|$.  Now
let $\sqrt{x_0-e_3}=u+iv$ with $u,v>0$, and then set the initial
values of $r$ and~$t$ to $r_1=(u+iv)/(u-iv)$ and
$t_1=-y_0/2(u^2+v^2)$.

Applying the first step in the iteration, we find that $a_1=x$ and
$b_1=R$, and also that $r_2=\sqrt{ux/(ux+vy)}$, where the quantity
inside the square root is real and positive, so we may take $r_2>0$
also, and $t_2=r_2t_1$ which is also real and with the opposite sign
to~$y_0$.  Now the rest of the iteration may be carried out using real
values for all quantities, and again the output value~$z_P$ is real
and satisfies $|z_P|<w_1/2$.

\section{Examples}
In the following examples, we will illustrate our method for computing
the period lattices of elliptic curves over $\C$, and the elliptic
logarithms of complex points. These examples were computed both using
the \Magma\ implementation by the second author and also using the
\Sage\ implementation by the first author;  a \Sage\ script
which reproduces these examples is available at \cite{elog-arxiv-preprint}.

All complex numbers in our examples were first computed to $100$
decimal places (though we only show the first $20$ decimal places
below, to save space) and then to $200$, $400$ places and up to~$1600$
decimal places: this allows us to illustrate the rapid convergence in
practice, where each iteration doubles the number of correct decimal
places.  In all these examples, no more than~$11$ iterations were required to
obtain $1600$ decimal places, making the computations essentially
instantaneous in practice.  Note that in all our examples the input
consists of exact algebraic values of $e_1$, $e_2$ (and
$e_3=-e_1-e_2$) in either $\Z[i]$ or $\Z[\root3\of2]$, which allows us
to regard the input as having infinite precision.  We have not
determined how sensitive the algorithms are to imprecision in the
input, since for our main application (elliptic curves over number
fields) the input is exact in the above sense, but we note that in
\cite[Prop.~3.3]{Dupont-thesis} one may find a formula for the number
of iterations required to approximate $\AGM(a,b)$ to a given relative
bit-precision, in terms of $\log(a/b)$.

Note also that we had to implement functions for computing optimal AGM
values, as the standard AGM function in \Magma\ does not always return
an optimal one, and this was also true of \Sage\ until version 4.3.2
when our new implementation (jointly written with Robert Bradshaw) was
released.

\begin{example}
Let $E$ be the elliptic curve over $\C$ given by the Weierstrass equation
$$
E:\quad Y^2 = 4(X-e_1)(X-e_2)(X-e_3)
$$
with
$$
e_1 = 3-2i,\quad e_2 = 1+i,\quad e_3 = -4+i.
$$
Observe that $\sum_{j=1}^3 e_j=0$. We will compute the period lattice
of $E$ using the method described in Theorem~\ref{thm:w1w2w3}. To do this,
first we let $E_0=E$ and calculate
$$
a_0 = \sqrt{e_1-e_3},\quad b_0 = \sqrt{e_1-e_2},\quad c_0 = \sqrt{a_0^2-b_0^2},
$$
where the signs of $a_0,b_0,c_0$ are chosen so that
\eqref{eqn:abc-cond} holds:
$$
|a_0-b_0|\le|a_0+b_0|,\quad |a_0-c_0|\le|a_0+c_0|,\quad
|c_0-ib_0|\le|c_0+ib_0|.
$$
In this example, one can verify that such $a_0,b_0,c_0$ are
\begin{eqnarray*}
a_0 &=& 2.70331029534753078867\ldots - i0.55487525889334275023\ldots\\
b_0 &=& 1.67414922803554004044\ldots - i0.89597747612983812471\ldots\\
c_0 &=& 2.23606797749978969640\ldots.
\end{eqnarray*}
In fact, all conditions in \eqref{eqn:abc-cond} are strictly
inequalities in this case, as the period lattice of $E$ is
non-rectangular.  Using Theorem \ref{thm:w1w2w3} with optimal AGM
values, we compute
\begin{eqnarray*}
w_1 &=& 1.29215151748713051904\ldots + i0.44759218107818896608\ldots\\
w_2 &=& 1.42661373451784507587\ldots - i0.80963848056301882107\ldots\\
w_3 &=& -0.13446221703071455682\ldots + i1.25723066164120778715\ldots;
\end{eqnarray*}
any two of $w_j$ form a $\Z$-basis for $\Lambda$ (the period lattice
of $E$), and, as expected, these $w_j$ are minimal coset
representatives of $2\Lambda$ in $\Lambda$.

Computing each $w_j$ to $100$ (respectively $200$, $400$, $800$,
$1600$) decimal places requires only~$7$ (respectively $8$, $9$, $10$,
$11$) basic AGM iterations.  We verified that the first $100$
(respectively $200$, $400$, $800$) decimal places are unchanged when
recomputed to higher precision, and also that the equality
$w_1=w_2+w_3$ held to the required number of decimal places in each
case.

\medskip
Next, we compute an elliptic logarithm of the point
$$
P = (2-i, 8+4i)\in E(\C)
$$ (which has infinite order). Using $a_0,b_0$ as above,
Algorithm~\ref{algo:elog} gives
$$
z_P = -0.72212997914002299126\ldots + i0.01717122412650902249\ldots.
$$ The number of iterations required for $100$, \dots, $1600$ decimal
places is the same as for the AGM itself, namely $7$,\dots,$11$.  

Note that $z_P$ is only well-defined modulo $\Lambda$.  Depending on
the basis for $\Lambda$, the value~$z_P$ obtained using
Algorithm~\ref{algo:elog} may not lie in the fundamental parallelogram
spanned by that basis.  In our case, one can check that
\begin{eqnarray*}
z_P &=& (-0.33249952362000772434\ldots)w_1 - (0.20502411273191295799\ldots)w_2\\
&\equiv& (0.66750047637999227565\ldots)w_1 + (0.79497588726808704200\ldots)w_2,
\end{eqnarray*}
and so $z_P$ is not in the fundamental parallelogram spanned by
$\{w_1,w_2\}$. Finally, one may verify that, to the given precision,
we have, as expected, 
\[
   \wp_\Lambda(z_P)=x(P),\quad 
   \wp'_\Lambda(z_P)=y(P),
\]
and also
\[
  \wp_\Lambda(w_1/2)=e_1, \quad
  \wp_\Lambda(w_2/2)=e_2, \quad
  \wp_\Lambda(w_3/2)=e_3,
\]
with $\wp'_\Lambda(w_j/2)= 0$ for all $j=1,2,3$.
\end{example}

\begin{example}[Rectangular Lattice]
Let $E$ be the elliptic curve over $\C$ given by the Weierstrass equation
$$
E:\quad Y^2 = 4(X-e_1)(X-e_2)(X-e_3)
$$
with
$$
e_1 = 1+3i,\quad e_2 = -4-12i,\quad e_3 = 3+9i.
$$
Observe that $\sum_{j=1}^3 e_j=0$ and the $e_j$ are collinear. By letting
$E_0=E$ and computing $a_0,b_0,c_0$ as before, we have
\begin{eqnarray*}
a_0 &=& 1.47046851723128684330\ldots - i2.04016608641756892919\ldots\\
b_0 &=& -3.22578581905571472955\ldots - i2.32501487101070997214\ldots\\
c_0 &=& 2.75099469475848456460\ldots - i3.81680125374499001591\ldots.
\end{eqnarray*}
This time, however, we have $|a_0-b_0|=|a_0+b_0|$, while the other two
relations in \eqref{eqn:abc-cond} are strict inequalities. Hence we have
two minimal elements (up to sign) in one coset of $2\Lambda$ in $\Lambda$ (where
$\Lambda$ is the period lattice of $E$), and $\Lambda$ is rectangular.

To obtain an orthogonal basis for $\Lambda$, we first let $w,w'=\pi/M(a_0,\pm
b_0)$:
\begin{eqnarray*}
w &=& -0.29920293143872535713\ldots + i1.10940038117892953702\ldots\\
w' &=& 1.14708588706988127437\ldots + i0.06697438037476960963\ldots.
\end{eqnarray*}
One can check that $|w|=|w'|$. Let $w_1 = (w+w')/2$ and $w_2 =
(w-w')/2$. Then $w_1,w_2$ form an orthogonal basis for $\Lambda$, as
in Lemma~\ref{lem:ortho-basis}:
\begin{eqnarray*}
w_1 &=&  0.42394147781557795862\ldots + i0.58818738077684957333\ldots\\
w_2 &=& -0.72314440925430331575\ldots + i0.52121300040207996369\ldots.
\end{eqnarray*}
Note that $\Re(w_2/w_1)=0$, as required for orthogonality.

\medskip
Let $z_P$ be an elliptic logarithm of the point $P=(3+2i, 28-14i)\in
E(\C)$ (which $P$ has infinite order). Algorithm \ref{algo:elog} gives
\begin{eqnarray*}
z_{P} &=& -0.42599662534207481578\ldots - i0.02491254923738153924\ldots\\
 &\equiv&  (0.62858224538977667533\ldots)w_1 + (0.37134662195976180031\ldots)w_2.
\end{eqnarray*}
Finally, we verify that (within the working precision)
$$
\wp_\Lambda(z_P)\approx x(P),\quad
\wp'_\Lambda(z_P)\approx y(P),
$$
and also
\begin{eqnarray*}
\wp_\Lambda\left(w_1/2\right) \approx e_2,\\
\wp_\Lambda\left(w_2/2\right) \approx e_3,\\
\wp_\Lambda\left(w/2\right) \approx e_1,
\end{eqnarray*}
and 
$$
\wp'_\Lambda(w_1/2)\approx
\wp'_\Lambda(w_2/2)\approx
\wp'_\Lambda(w/2)\approx 0.
$$
\end{example}

\begin{example}
Let $K=\Q(\theta)$ where $\theta$ is a root of the polynomial $x^3-2$. Let $E$
be the elliptic curve defined over $K$ given by the Weierstrass equation
$$
E:\quad Y^2 = 4(X-\theta)(X-1)(X+1+\theta).
$$
Note that $K$ has one real embedding and one
pair of complex embeddings. Let $E_1, E_2$ be the real and complex embedding of
$E$ respectively, with equations
\begin{eqnarray*}
E_1: & & Y^2 = 4(X-\sqrt[3]{2})(X-1)(X+1+\sqrt[3]{2})\\
E_2: & & Y^2 = 4(X-\omega\sqrt[3]{2})(X-1)(X+1+\omega\sqrt[3]{2})
\end{eqnarray*}
where $\sqrt[3]{2}$ is the real cube root of~$2$ and $\omega=\exp(2\pi
i/3)$ is a cube root of unity. Now $E_1$ has three real roots, so
the period lattice of $E_1$ is rectangular. In fact, by letting
$e^{(0)}_1=\sqrt[3]{2}, e^{(0)}_2=1, e^{(0)}_3=-1-\sqrt[3]{2}$, we can
compute $a_0,b_0,c_0$ satisfying \eqref{eqn:abc-cond} as
\begin{eqnarray*}
a_0 &=& 1.87612422291002530767\ldots\\
b_0 &=& 0.50982452853395859808\ldots\\
c_0 &=& 1.80552514518487755254\ldots.
\end{eqnarray*}
One sees that $|c_0-ib_0|=|c_0+ib_0|$. As before, we compute
\begin{gather*}
w = \frac{\pi}{M(c_0,ib_0)} =
 2.90130425944817643666\ldots - i1.70677932803214980295\ldots\\
w' = \frac{\pi}{M(c_0,-ib_0)} = \bar{w},
\end{gather*}
and let $w_1,w_2 = (w\pm w')/2$. Then $w_1,w_2$ form an orthogonal
basis for the period lattice of $E_1$. In this example, we have
$w_1=\Re(w)$ and $w_2=i\Im(w)$.

\medskip
Secondly, the period lattice of $E_2$ is non-rectangular, since
the roots of $E_2$ are not collinear. In fact, by letting
$e^{(0)}_1=-1-\omega\sqrt[3]{2}, e^{(0)}_2=1,
e^{(0)}_3=\omega\sqrt[3]{2}$ (here we must ensure that $a_0,b_0,c_0$
satisfy \eqref{eqn:abc-cond}), we have
\begin{eqnarray*}
a_0 &=& 1.10851094368231305521\ldots - i0.98431471713501219051\ldots\\
b_0 &=& 0.43669517024285334726\ldots - i1.24929666083200513980\ldots\\
c_0 &=& 1.34004098848655674756\ldots - i0.40712323180652750769\ldots.
\end{eqnarray*}
One can check that all conditions in \eqref{eqn:abc-cond} are strict
inequalities, which also confirms that the period lattice of $E_2$ is
non-rectangular. By Theorem \ref{thm:w1w2w3}, we finally obtain
\begin{eqnarray*}
w_1 &=& 1.28194824894788708942\ldots + i1.88277404359595361782\ldots\\
w_2 &=& 2.36557653380849535471\ldots - i0.03808700290170419307\ldots\\
w_3 &=& -1.08362828486060826529\ldots + i1.92086104649765781090\ldots
\end{eqnarray*}
with $w_1\approx w_2+w_3$.
\end{example}

\begin{example}\label{eq:special-ii}
Let $E$ be the elliptic curve over $\C$ given by the Weierstrass equation
$$
E:\quad Y^2 = 4(X-e_1)(X-e_2)(X-e_3)
$$
with
$$
e_1 = -1-3i,\quad e_2 = 3+i,\quad e_3 = -2+2i.
$$ Observe that $\sum_{j=1}^3 e_j=0$ and $|e_1-e_3|=|e_2-e_3|$. Thus
$e_1,e_2,e_3$ form an isosceles triangle. Letting $E_0=E$ and
computing $a_0,b_0,c_0$ as before, we have
\begin{eqnarray*}
a_0 &=& 1.74628455779589152702\ldots - i1.43161089573822132705\ldots\\
b_0 &=& 0.91017972112445468260\ldots - i2.19736822693561993207\ldots\\
c_0 &=& 2.24711142509587014360\ldots - i0.22250788030178260411\ldots.
\end{eqnarray*}
Hence by Theorem \ref{thm:w1w2w3}, we obtain
\begin{eqnarray*}
w_1 &=&  0.81646689790312614904\ldots + i1.10773333340066743861\ldots\\
w_2 &=&  1.36061503191563570645\ldots - i0.20595647167234558716\ldots\\
w_3 &=& -0.54414813401250955741\ldots + i1.31368980507301302578\ldots
\end{eqnarray*}
with $w_1\approx w_2+w_3$. In addition, one can check that
$\Re(w_1/w_3)=1/2$ as claimed in Section \ref{sec:special-ii}. Let $\Lambda$ be the period lattice of $E$. We finally verify that
$\wp_\Lambda(w_j/2) \approx e_j$ for $j=1,2,3$, and
$$
\wp'_\Lambda(w_1/2)\approx
\wp'_\Lambda(w_2/2)\approx
\wp'_\Lambda(w_3/2)\approx0.
$$
\end{example}

\providecommand{\bysame}{\leavevmode\hbox to3em{\hrulefill}\thinspace}
\providecommand{\MR}{\relax\ifhmode\unskip\space\fi MR }
\providecommand{\MRhref}[2]{%
  \href{http://www.ams.org/mathscinet-getitem?mr=#1}{#2}
}
\providecommand{\href}[2]{#2}

\end{document}